\newtheorem{theorem}{Theorem}[section]
\newtheorem{corollary}[theorem]{Corollary}
\newtheorem{lemma}[theorem]{Lemma}
\def\irr#1{{\rm  Irr}(#1)}
\def\cd#1{{\rm  cd}(#1)}
\def\ker#1{{\rm ker} (#1)}
\def\phi{\varphi}
\def\nl#1{{\rm nl}(#1)}
\begin{document}

\title[Centers of Characters]{Groups where the centers of the irreducible characters form a chain}
\author[Mark L. Lewis]{Mark L. Lewis}

\address{Department of Mathematical Sciences, Kent State University, Kent, OH 44242}
\email{lewis@math.kent.edu}

\subjclass[2010]{ Primary: 20C15; Secondary: 20D30}
\keywords{centers of characters, kernels of characters, nested groups}

\begin{abstract}
We consider groups where the centers of the irreducible characters form a chain.  We obtain two alternate characterizations of these groups, and we obtain some information regarding the structure of these groups.  Using our results, we are able to classify those groups where the kernels of the irreducible characters form a chain.  We show that a result of Nenciu regarding nested GVZ groups is really a result about nested groups.  We obtain an alternate proof of a theorem of Isaacs regarding the existence of $p$-groups with a given set of irreducible character degrees.
\end{abstract}

\maketitle

\section{Introduction}

In this paper, all groups will be finite and we will write $\irr G$ for the set of irreducible characters of $G$.  We consider a problem suggested by Berkovich in \cite{berk}.  In Problem \# 24 in the section listed as Research Problems and Themes I of \cite{berk}, Berkovich suggested to study the $p$-groups where the kernels of irreducible characters form a chain and alternately, the $p$-groups where the set of centers of the irreducible characters form a chain.  We will see that it is quite easy to see that if $G$ is a $p$-group where the kernels of the irreducible characters of $G$ form a chain, then $G$ must be cyclic.

It is slightly more interesting to remove the hypothesis that $G$ is a $p$-group, and to look at any finite group where the kernels of the irreducible characters form a chain.  Even in this case, we will see that it is still not difficult to show the following:

\begin{theorem} \label{main1}
Let $G$ be a group.  Then the following are equivalent:
\begin{enumerate}
\item The kernels of the irreducible characters form a chain.
\item The normal subgroups of $G$ form a chain.
\item $G$ has a unique chief series.
\end{enumerate}	
\end{theorem}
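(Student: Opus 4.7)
The plan is to prove this by circular implication, handling the equivalence $(2)\Leftrightarrow(3)$ as a pure group-theoretic statement and then bridging to $(1)$ via a single character-theoretic fact.

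First I would dispatch $(2)\Leftrightarrow(3)$. The implication $(2)\Rightarrow(3)$ is almost immediate: if the normal subgroups form a chain, then that chain itself is the unique maximal chain of normal subgroups, and refining to include composition-factor-sized jumps just produces the (unique) chief series of $G$. For $(3)\Rightarrow(2)$, I would argue the contrapositive. Suppose $M$ and $N$ are normal subgroups of $G$ with $M\not\le N$ and $N\not\le M$. Any chain of normal subgroups of $G$ refines to a chief series, so I can build a chief series passing through $M$ and another passing through $N$; since $M\neq N$ and each is absent from the other's series, these two chief series are distinct, contradicting uniqueness.

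Next, $(2)\Rightarrow(1)$ is immediate, since every $\ker\chi$ for $\chi\in\irr G$ is a normal subgroup of $G$, and any subset of a chain is a chain.

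The substantive step is $(1)\Rightarrow(2)$. Here I would use the standard fact that every normal subgroup $N$ of $G$ equals the intersection of the kernels of the irreducible characters of $G$ that contain it; concretely,
\[
N \;=\; \bigcap_{\chi\in\irr G,\; N\le \ker\chi} \ker\chi ,
\]
which follows because $\irr{G/N}$ separates points of $G/N$. Assuming (1), the family on the right-hand side is a finite totally ordered collection of normal subgroups, so the intersection equals its smallest member. Hence $N$ is itself the kernel of some irreducible character of $G$, which shows that the set of normal subgroups of $G$ coincides with the set of kernels of irreducible characters; by (1) this set is a chain, giving (2).

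I expect the only place requiring any care to be the last step, and specifically the invocation that normal subgroups are intersections of kernels of irreducible characters (a consequence of the faithfulness of the regular representation of $G/N$). The group-theoretic equivalence $(2)\Leftrightarrow(3)$ and the implication $(2)\Rightarrow(1)$ are routine and should take only a sentence apiece.
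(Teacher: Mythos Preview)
Your proposal is correct and follows essentially the same approach as the paper. The paper argues the cycle $(1)\Rightarrow(2)\Rightarrow(3)\Rightarrow(1)$ while you do $(2)\Leftrightarrow(3)$ together with $(1)\Leftrightarrow(2)$, but the substantive content is identical: the key step $(1)\Rightarrow(2)$ in both cases is that every normal subgroup is an intersection of character kernels, and in a chain a finite intersection is simply the smallest member, so every normal subgroup is itself a kernel; the remaining implications are the routine group-theoretic observations about chief series that you describe.
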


Thus, it makes sense to modify Berkovich's question by ignoring the linear characters, and just assume that the kernels of the nonlinear irreducible characters form a chain.  Qian and Wang characterize the $p$-groups with this property in \cite{qiwa}.  In particular, if $G$ is a $p$-group for some prime $p$, then the kernels of the nonlinear irreducible characters of $G$ form a chain if and only if either $G$ has maximal class or $G'$ has order $p$ and $Z(G)$ is cyclic.





Again, it makes sense to ask this question for any group.   We obtain the following theorem.

\begin{theorem} \label{main2}
Let $G$ be a group.  Then the following are equivalent:
\begin{enumerate}
\item The kernels of the nonlinear irreducible characters of $G$ form a chain.
\item For every normal subgroup $M$ of $G$ with $1 \le M < G'$ either (a) $G'/M$ is not a chief factor of $G$ and $G/M$ has a unique minimal normal subgroup or (b) $G'/M$ is a chief factor of $G$ and $G/M$ has at most two minimal normal subgroups.
\end{enumerate}
\end{theorem}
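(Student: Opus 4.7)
The plan rests on the Gaschütz criterion: a normal subgroup $M$ of a finite group $H$ is the kernel of an irreducible character of $H$ iff $\operatorname{soc}(Z(H/M))$ is cyclic, equivalently iff $H/M$ has a faithful irreducible character. Since $\chi \in \operatorname{Irr}(G)$ is nonlinear precisely when $G' \not\le \ker\chi$, the set $\mathcal{K}(G) := \{\ker\chi : \chi \in \operatorname{Irr}(G),\ \chi(1) > 1\}$ consists of the normal subgroups $M$ with $G' \not\le M$ for which $G/M$ admits a faithful irreducible character. A second preliminary is that inflation yields a kernel-preserving bijection between the nonlinear irreducibles of $G/N$ (for $N \trianglelefteq G$ with $N \le G'$) and the nonlinear irreducibles of $G$ with kernel containing $N$, so the chain condition (1) descends to $\mathcal{K}(G/N)$ for every such $N$.

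For (1) $\Rightarrow$ (2), I would fix $M$ with $1 \le M < G'$, pass to $H := G/M$, and apply the descent so that $\mathcal{K}(H)$ is a chain; it then suffices to verify the structural dichotomy for $H$ at the level $M = 1$. The core claim is that whenever $L$ is a minimal normal subgroup of $H$ with $L \ne H'$, one has $L \in \mathcal{K}(H)$. Granting this, two distinct such minimal normals already give two incomparable kernels, and a third forces a contradiction to the chain hypothesis. Since any minimal normal $L$ with $L \not\le H'$ is abelian (from $L \cong LH'/H' \le H/H'$) and two distinct minimal normals intersect trivially, the resulting count yields case (b) when $H'$ itself is a minimal normal (at most $H'$ plus one other), and case (a) otherwise (a unique minimal normal lying properly inside $H'$).

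For (2) $\Rightarrow$ (1), I would argue by induction on $|G|$. Condition (2) at $M = 1$ singles out a minimal normal $L$ of $G$ with $L \ne G'$ (the unique minimal normal inside $G'$ in case (a), or the second minimal normal in case (b); in the subcase of (b) where $G'$ is the only minimal normal, every nontrivial normal subgroup contains $G'$ and hence $\mathcal{K}(G) \subseteq \{1\}$ is trivially a chain). In the remaining cases, any nontrivial $M \in \mathcal{K}(G)$ must contain $L$, because $M$ contains some minimal normal but cannot contain $G'$. Thus $\mathcal{K}(G) \setminus \{1\}$ corresponds via inflation to $\mathcal{K}(G/L)$, which is a chain by the inductive hypothesis (after verifying that (2) descends to $G/L$, with some care when $L \not\le G'$). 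Adjoining the trivial kernel (present iff $G$ has a faithful irreducible character) preserves the chain.

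The main obstacle is the core lemma in the forward direction: that a minimal normal subgroup $L \ne H'$ of $H$ is always the kernel of a nonlinear irreducible, i.e., $\operatorname{soc}(Z(H/L))$ is cyclic. Any failure would correspond to two distinct central minimal normals in $H/L$; to translate this into a violation of the chain condition on $\mathcal{K}(H)$, one lifts these central minimal normals back to $H$ and exhibits incomparable kernels, which requires careful use of Gaschütz's criterion together with the chain hypothesis, as well as case analysis on whether $L$ lies inside $H'$ or is complementary to it.
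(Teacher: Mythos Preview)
Your strategy differs from the paper's. The paper never invokes Gasch\"utz's criterion; instead it proves directly (Lemma~\ref{knfoura}) that every normal subgroup $M$ with $M < G'$ is a \emph{waist}---comparable to every normal subgroup of $G$---by exhibiting, whenever some normal $N$ satisfies $N \cap M < N$ and $N \cap M < M$, two nonlinear irreducibles lying over carefully chosen characters of $NM/(N\cap M)$ whose kernels separate $N$ from $M$. Case~(a) of condition~(2) then follows immediately from the waist property, while case~(b) requires a further structural analysis of $Z(G/M)$ (Lemma~\ref{knfive1}). The converse is handled without induction: given two nonlinear irreducibles, either one kernel lies properly in $G'$ (hence is a waist, forcing comparability), or both kernels contain the unique $M$ with $G'/M$ a chief factor, and then both $\ker{\chi}/M$ and $\ker{\psi}/M$ are shown to sit inside a cyclic section of $Z(G/M)$.

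The genuine gap in your proposal is the core lemma, which you flag but do not prove. Note first that it fails without the chain hypothesis: for $H=S_4\times C_2\times C_2$ and $L=V_4\times 1\times 1$ one has $L$ minimal normal and $L\neq H'=A_4\times 1\times 1$, yet $Z(H/L)\cong C_2\times C_2$ has non-cyclic socle, so $L\notin\mathcal K(H)$. Hence any proof must genuinely exploit the chain condition. Your sketch---lift two central minimal normals of $H/L$ back to $H$ and exhibit incomparable nonlinear kernels---is reasonable in outline, but carrying it out amounts to producing explicit irreducibles over chosen characters of those lifts and controlling their kernels relative to $H'$; this is precisely the kind of construction the paper performs directly in Lemma~\ref{knfoura}, so the Gasch\"utz reformulation repackages rather than bypasses the essential work. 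The descent of condition~(2) to $G/L$ in your inductive converse when $L\not\le G'$ is a second, smaller gap: there one must relate the minimal normal subgroups of $G/(M\cap G')$ to those of $G/M$ for $L\le M<G'L$, and this correspondence is not automatic.
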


For any group, we will see that if the kernels of the nonlinear irreducible characters form a chain, then the centers of the irreducible characters of $G$ form a chain.  In this situation, we will write $G = X_0 > X_1 > \dots X_n \ge 1$ for the distinct centers of irreducible characters, and we will call this the {\it chain of centers} for $G$.  Recall that Berkovich also suggested studying these groups in \cite{berk}.  It turns out that studying groups where the centers of the irreducible characters form a chain underlies our work and has much independent interest.  We will say that a group $G$ is {\it nested} if for all characters $\chi, \gamma \in \irr G$ either $Z(\chi) \le Z(\gamma)$ or $Z (\gamma) \le Z(\chi)$.  

In the next theorem, we present two characterizations of the nested groups that are independent of the irreducible characters.  

\begin{theorem} \label{main3}
Let $G$ be a group.  Then the following are equivalent:
\begin{enumerate}
\item $G$ is a nested group.
\item $G$ has a chain of normal subgroups $G = X_0 > X_1 > \dots > X_n \ge 1$ so that every normal subgroup $N$ of $G$ satisfies $Z (G/N) = X_i/N$ for some $i \in \{ 0, \dots, n \}$.
\item For all normal subgroups $N$ and $M$, if we define $Z_N/N = Z (G/N)$ and $Z_M/M = Z(G/M)$, then either $Z_N \le Z_M$ or $Z_M \le Z_N$.
\end{enumerate}
Furthermore, in (2), if for every $i \in \{ 0, \dots, n \}$ there exists a normal subgroup $N$ so that $Z(G/N) = X_i/N$, then $G = X_0 > X_1 > \dots >X_n \ge 1$ is the chain of centers for $G$.
\end{theorem}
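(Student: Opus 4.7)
My plan is to prove the cycle $(1) \Rightarrow (3) \Rightarrow (2) \Rightarrow (1)$ and then to verify the ``Furthermore'' clause using the same bridge. The single technical fact I will invoke throughout is the standard identity $Z(H) = \bigcap_{\psi \in \irr H} Z(\psi)$ applied to $H = G/N$; lifting to $G$ and using that $Z(\chi)/\ker\chi = Z(G/\ker\chi)$ for every $\chi \in \irr G$, this yields
\[
Z_N \;=\; \bigcap_{\chi \in \irr G,\; N \le \ker\chi} Z(\chi),
\]
and, in the special case $N = \ker\chi$, the identity $Z(\chi) = Z_{\ker\chi}$. This is the bridge between the ``normal subgroup'' world (the $Z_N$) and the ``character'' world (the $Z(\chi)$).

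For $(1) \Rightarrow (3)$, under (1) the $Z(\chi)$ appearing in the intersection above form a chain, so the intersection is simply the smallest $Z(\chi)$ appearing, and $Z_N = Z(\chi_N)$ for some $\chi_N \in \irr G$. Similarly $Z_M = Z(\gamma_M)$, and the comparability of $Z_N, Z_M$ then reduces to (1). For $(3) \Rightarrow (2)$, the distinct values of $Z_N$ (finitely many, since $G$ is finite) form a chain by (3); listing them in descending order as $G = Z_G = X_0 > X_1 > \dots > X_n \ge 1$, every normal $N$ satisfies $Z(G/N) = Z_N/N = X_i/N$ for the $i$ with $X_i = Z_N$. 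For $(2) \Rightarrow (1)$, every $\chi \in \irr G$ has $Z(\chi) = Z_{\ker\chi}$, which equals some $X_i$ by (2), so the centers of irreducible characters all lie in the chain $X_0 > \dots > X_n$ and are pairwise comparable.

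For the ``Furthermore'' clause, assume that each $X_i$ is realized as $X_i/N_i = Z(G/N_i)$, i.e., $X_i = Z_{N_i}$. Applying the bridge identity to $N_i$ gives $X_i = \bigcap Z(\chi)$ over $\chi \in \irr G$ with $N_i \le \ker\chi$. Each $Z(\chi)$ is some $X_j$ by $(2) \Rightarrow (1)$, and the intersection is the smallest $X_j$ that appears among these $\chi$; this forces some $\chi$ to satisfy $Z(\chi) = X_i$. Hence every $X_i$ is the center of an irreducible character, and $G = X_0 > \dots > X_n \ge 1$ is precisely the chain of centers of $G$.

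The main obstacle I anticipate is the mismatch between ``arbitrary normal subgroup $N$'' in (2) and (3) and ``kernel of an irreducible $\chi$'' in (1): an abstractly chosen $Z_N$ need not \emph{a priori} be the center of any single irreducible character. The bridge identity resolves this, since an intersection drawn from a chain equals its smallest element, which forces each $Z_N$ to coincide with some $Z(\chi)$.
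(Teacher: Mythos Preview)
Your proof is correct and complete. The main difference from the paper's argument is the order of the cycle and the tool used to link characters to arbitrary normal subgroups. The paper proves $(1)\Rightarrow(2)\Rightarrow(3)\Rightarrow(1)$: for the step $(1)\Rightarrow(2)$ it first builds the chain of centers $X_0>\cdots>X_n$ and then invokes Lemma~\ref{sixa} (which in turn rests on Lemma~\ref{ten} and the commutator subgroups $[X_i,G]$) to identify $Z(G/N)$ with some $X_i/N$. You instead prove $(1)\Rightarrow(3)\Rightarrow(2)\Rightarrow(1)$, and your key move---that $Z_N=\bigcap_{N\le\ker\chi}Z(\chi)$ is an intersection drawn from a finite chain, hence equals its minimum and so coincides with some $Z(\chi)$---bypasses the commutator analysis entirely. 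This makes your argument more self-contained for this theorem; the paper's route has the compensating advantage that Lemma~\ref{sixa} is reused later (e.g.\ in the proof of Theorem~\ref{main4}). Your treatment of the ``Furthermore'' clause is essentially the same as the paper's, phrased through the same bridge identity.
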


In the situation of Theorem \ref{main3} (2) where $G = X_0 > X_1 > \dots > X_n \ge 1$ is the chain of centers for $G$, we will say that $n$ is the {\it nested length} of $G$.   We now produce a structure result on the factors of the chain of centers.

\begin{theorem} \label{main4}
Let $G$ be a nested group with chain of centers $G = X_0 > X_1 > \dots > X_n \ge 1$. Then the following are true:
\begin{enumerate}
\item For all $i = 1, \dots n$, one of the following occurs: 
\begin{enumerate}
	\item $[X_{i-1},G] \le X_i$ and both $X_{i-1}/X_i$ and $[X_{i-1},G]/[X_i,G]$ are elementary abelian $p$-groups for some prime $p$ or 
	\item $[X_{i-1},G]X_i/X_i$ is the unique minimal normal subgroup of $G/X_i$. 
\end{enumerate}
\item If there exist integers $i$ and $j$ with $1 \le i \le j \le n$ such that $[X_{i-1},G] \le X_j$, then both $X_{i-1}/X_j$ and $[X_{i-1},G]/[X_j,G]$ are elementary abelian $p$-groups for some prime $p$.
\item If there exist integers $1 \le j \le k \le n$ so that $[X_{i-1},G] \le X_i$ for all $i$ with $j \le i \le k$, then $X_{j-1}/X_k$ and $[X_{j-1},G]/[X_k,G]$ are $p$-groups for some prime $p$.
\item $G$ is nilpotent if and only if $[X_{i-1},G] \le X_i$ for all $i = i, \dots, n$.
\end{enumerate}  
\end{theorem}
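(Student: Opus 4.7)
The overall plan is to apply Theorem~\ref{main3} systematically: for every $N \trianglelefteq G$ one has $Z(G/N) = X_{f(N)}/N$ for a unique index $f(N) \in \{0,\ldots,n\}$, and the map $N \mapsto X_{f(N)}$ is order-preserving with image in the chain of centers (the last fact follows from Theorem~\ref{main3}(2)--(3)).

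For part~(1), I would fix $i$ and set $X_k = f(X_i)$, so $k \le i$. If $k < i$, the chain forces $X_k \supseteq X_{i-1}$, yielding $[X_{i-1},G] \le [X_k,G] \le X_i$ (case~(a)); the elementary-abelian conclusion then reduces to part~(2) applied with indices $(i,i)$. If $k = i$, then $Z(G/X_i) = 1$, and so $[X_{i-1},G] \not\le X_i$ (case~(b)). In case~(b), two distinct minimal normals $A/X_i, B/X_i$ would have $A \cap B = X_i$, and Theorem~\ref{main3}(3) would allow one to assume $f(A) \le f(B)$, giving $A \subseteq f(A) \subseteq f(B)$ and $[f(B), G] \le B$, hence $[A, G] \le A \cap B = X_i$, i.e., $A/X_i \le Z(G/X_i) = 1$, a contradiction. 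To identify the unique minimal normal as $[X_{i-1},G]X_i/X_i$, I would show that any $M \trianglelefteq G$ with $X_i < M \le [X_{i-1},G]X_i$ has $f(M) \supsetneq X_i$, hence $f(M) \supseteq X_{i-1}$ (no $X_\ell$ lies strictly between $X_{i-1}$ and $X_i$), so $[X_{i-1},G] \le [f(M),G] \le M$ and $M = [X_{i-1},G]X_i$.

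For part~(2), $[X_{i-1},G] \le X_j$ makes $X_{i-1}/X_j$ central (hence abelian) in $G/X_j$, and $[X_{i-1},G]/[X_j,G]$ central and abelian in $G/[X_j,G]$. For the single-prime elementary-abelian refinement, if $X_{i-1}/X_j$ had two distinct prime divisors or an element of order $p^2$, it would contain a proper nontrivial characteristic subgroup $\bar U$, which is $G$-normal because $X_{i-1}/X_j$ is central. The preimage $U \trianglelefteq G$ would give $X_j < U < X_{i-1}$, and comparing $f(U)$ with the $f$-values of $X_j, X_{i-1}$ via Theorem~\ref{main3}(2)--(3), together with using cyclicity of $Z(\chi)/\ker\chi$ for $\chi \in \irr{G}$ with $Z(\chi) = X_{i-1}$, should force a contradiction with the chain of centers. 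The claim for $[X_{i-1},G]/[X_j,G]$ follows by repeating the argument inside $G/[X_j,G]$.

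For part~(3), the hypothesis combined with part~(2) gives each $X_{i-1}/X_i$ elementary abelian of some prime $p_i$; the challenge is to show all $p_i$ coincide. Suppose $p_m \ne p_{m+1}$. Then $[X_{m-1},G] \not\le X_{m+1}$ (else part~(2) applied to $(m,m+1)$ would force $p_m = p_{m+1}$), and so by the part~(1) analysis $Z(G/X_{m+1}) = X_m/X_{m+1}$. Then $X_{m-1}/X_{m+1}$ is a central extension with coprime-order abelian kernel and quotient, which splits by Schur--Zassenhaus; the characteristic $p_m$- and $p_{m+1}$-parts are $G$-invariant, and each is $G$-centralised (the $p_{m+1}$-part because it equals the central $X_m/X_{m+1}$; the $p_m$-part via the $G$-equivariant isomorphism with $X_{m-1}/X_m$, itself central in $G/X_m$). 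So $X_{m-1}/X_{m+1} \le Z(G/X_{m+1}) = X_m/X_{m+1}$, forcing $X_{m-1} = X_m$, impossible. Hence the $p_i$ agree and both quotients are $p$-groups. For part~(4), the classical identity $\bigcap_\chi Z(\chi) = Z(G)$ gives $X_n = Z(G)$; if $[X_{i-1},G] \le X_i$ for all $i$, induction yields $\gamma_t(G) \le X_{t-1}$, so $\gamma_{n+2}(G) = 1$ and $G$ is nilpotent; conversely, a nilpotent $G$ with $[X_{i-1},G] \not\le X_i$ for some $i$ would, by part~(1)(b), produce a nontrivial nilpotent $G/X_i$ with trivial centre, impossible. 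The main obstacle is the elementary-abelian step of part~(2): centrality alone does not preclude structures like $\mathbb{Z}/4 \times \mathbb{Z}/4$, so the argument must combine cyclicity of $Z(\chi)/\ker\chi$ with Theorem~\ref{main3}'s chain structure to rule out decomposable intermediate subgroups.
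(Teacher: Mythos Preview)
Your arguments for parts~(1)(b), (3), and (4) are essentially sound (the Schur--Zassenhaus splitting in part~(3) is a nice alternative to the paper's approach of comparing $[X_{i-1},G]X_{i+1}/X_{i+1}$ as a common subquotient). However, there is a genuine gap in part~(2), which you yourself flag as ``the main obstacle'' without resolving it, and on which parts~(1)(a) and~(3) both depend.

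The problem is this: producing a proper nontrivial characteristic subgroup $\bar U \le X_{i-1}/X_j$ and pulling it back to a normal $U$ with $X_j < U < X_{i-1}$ yields no contradiction whatsoever with the chain of centers. Theorem~\ref{main3} only says that $Z(G/N)$ is always some $X_\ell$; it places no restriction on which normal subgroups exist between consecutive $X_\ell$'s, and indeed there are typically many. Nor does the cyclicity of $Z(\chi)/\ker\chi$ help: for $\chi$ with $Z(\chi)=X_{i-1}$ you only know $[X_{i-1},G]\le\ker\chi\le X_{i-1}$, and $\ker\chi$ need not be comparable with $X_j$, so cyclicity of $X_{i-1}/\ker\chi$ says nothing about the structure of $X_{i-1}/X_j$. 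A quotient like $X_{i-1}/X_j\cong\mathbb{Z}/p^2$ is not ruled out by these considerations.

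The paper's argument is entirely different and supplies the missing idea: reduce (by passing to a suitable quotient) to the case $[X_j,G]=1$ and $|[X_{i-1},G]|=p$. Then for $x\in X_{i-1}$ and $y\in G$ the commutator $[x,y]$ lies in $[X_{i-1},G]\le Z(G)$, so the identity $[x^p,y]=[x,y]^p$ holds, and $[x,y]^p=1$ since $|[X_{i-1},G]|=p$. Hence $x^p\in Z(G)=X_j$, giving exponent $p$ for $X_{i-1}/X_j$; the same identity handles $[X_{i-1},G]/[X_j,G]$. This commutator computation is the crux you are missing.
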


We will show that a nilpotent nested group is the direct product of a nested $p$-group with an abelian group.  We also show that the factors of the upper central series except for $Z(G)$ and of the lower central series except for $G/G'$ of a nested group $G$ are elementary abelian $p$-groups for some prime $p$.

In her study of GVZ groups in \cite{gvz} and \cite{nested}, Nenciu has introduced nested groups in the context of GVZ-groups.  Nenciu introduced GVZ-groups in \cite{gvz}.  A group $G$ is a GVZ-group if every irreducible character $\chi$ vanishes on $G \setminus Z(\chi)$.  It was proved in \cite{gvz}
that GVZ groups are nilpotent.  

In the definition of nested groups in \cite{gvz} and \cite{nested}, Nenciu includes the fact that the containment of the centers depends on the cardinality of the degree of the irreducible character.  We include Nenciu's definition of ``nested'' under the name ``nested by degrees'' in Section \ref{degs}.  We will also define when a group is strongly nested by degrees.  We will show in Theorem \ref{strict} that these three definitions are equivalent for GVZ groups, and we will then show that a couple of the results that Nenciu proved for nested GVZ-groups in \cite{nested} are really just consequences of being nested.

It is not difficult to see that every group of nilpotence class $2$ is a GVZ-group.  Hence, we can use the results regarding nested GVZ-groups to obtain the following theorem for nested $p$-groups.  Recall that a $p$-group $G$ is semi-extraspecial if for all subgroups $N$ of index $p$ in $Z (G)$, the quotient $G/N$ is an extraspecial group, and $G$ is ultraspecial if $G$ is semi-extraspecial and $|G:Z (G)| = |Z(G)|^2$. 

\begin{theorem}\label{main5}
If $G$ is a nonabelian nested $p$-group with chain of centers $G = X_0 > X_1 > \dots X_n > 1$, then
\begin{enumerate}
\item $|G:X_1|$ is a square.
\item If $G' \le X_i$ for some integer $i = 1, \dots, n$, then $|G:X_j|$ is a square for all integers $1 \le j \le i$.
\item If the subgroup $N$ satisfies $[X_1,G] \le N < NG' = X_1$, then $G/N$ is a semi-extraspecial group.
\item If $X_2 < X_2G'$, then $|G:X_1| = |X_2G':X_2|^2 = |G':[X_1,G]|^2$ and $X_2 \cap G' = [X_1,G]$ and there exists a subgroup $A$ with $X_2 \le A < X_1$ so that $G/A$ is ultraspecial.
\end{enumerate}
\end{theorem}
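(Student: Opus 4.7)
The plan is to reduce each of (1)--(4) to a statement about nested GVZ $p$-groups and then invoke Nenciu's results from \cite{nested}. Two facts form the bridge: any quotient of a nested group is again nested (if $\bar\chi,\bar\gamma\in\irr{G/N}$ lift to $\chi,\gamma\in\irr{G}$ then $Z(\chi),Z(\gamma)$ are comparable in $G$, hence so are their images in $G/N$), and every $p$-group of nilpotence class at most two is automatically a GVZ-group. Applying Theorem \ref{main4}(4) to the nilpotent group $G$ gives $[X_{i-1},G]\le X_i$ for every $i$; in particular $G'=[X_0,G]\le X_1$, and hence $[G',G]\le [X_1,G]\le X_2$.

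For (1) and (2) I would pass to $\bar G=G/[G',G]$. Since $[G',G]\le X_1$, this quotient has class two and is therefore a nested GVZ $p$-group. The point to verify is that for each relevant $j$, the subgroup $X_j/[G',G]$ sits in the chain of centers of $\bar G$. For $j=1$ this follows because any $K\le X_1$ with $X_1/K=Z(G/K)$ must satisfy $K\ge [X_1,G]\ge [G',G]$. For $j\le i$ under the hypothesis $G'\le X_i$ of (2), the same reasoning works since $X_j\ge X_i\ge G'$ forces $[X_j,G]\ge [G',G]$. Nenciu's theorem for nested GVZ $p$-groups states that each center $Y$ in such a group satisfies $|G:Y|=\chi(1)^2$ for a suitable irreducible character; applying this to the appropriate $X_j/[G',G]$ gives both (1) and (2).

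For (3), fix $N$ with $[X_1,G]\le N<NG'=X_1$. Then $X_1/N\le Z(G/N)$ and $(G/N)'=G'N/N=X_1/N$, so $G/N$ has class two and its derived subgroup equals its largest proper character center. Since $G/X_1$ is elementary abelian by Theorem \ref{main4}(1)(a), the quotient $(G/N)/(G/N)'\cong G/X_1$ is elementary abelian, and Nenciu's structural result for nested GVZ $p$-groups of this type forces $G/N$ to be semi-extraspecial. For (4), the hypothesis $X_2<X_2G'$ singles out a case in which one can exhibit $A$ with $X_2\le A<X_1$ so that $G/A$ is a nested GVZ $p$-group whose chain of centers has length exactly two and whose derived subgroup is the top proper center; Nenciu's ultraspecial criterion then applies to yield that $G/A$ is ultraspecial. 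The numerical identities $|G:X_1|=|X_2G':X_2|^2=|G':[X_1,G]|^2$ and $X_2\cap G'=[X_1,G]$ then drop out of the relations $|H:Z(H)|=|Z(H)|^2$ and $Z(H)=H'$ in the ultraspecial group $H=G/A$, together with standard isomorphism theorems relating $G'/[X_1,G]$ to $X_2G'/X_2$.

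The principal obstacle is the bookkeeping in (3) and (4): selecting the exact normal subgroup to quotient by so that the resulting nested GVZ $p$-group matches the hypothesis of Nenciu's semi-extraspecial or ultraspecial classification, and then translating the quotient conclusion back into statements about $X_1$, $X_2$, $G'$, and $[X_1,G]$ inside $G$. The numerical identities in (4) are the most delicate part and will require careful tracking of the relative positions of $X_2$, $G'$, and $[X_1,G]$ inside $X_1$.
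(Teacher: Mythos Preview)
Your outline for (1), (2), and (3) is essentially the paper's own route: pass to a class-two quotient so that the nested group becomes a nested GVZ $p$-group, and then read off the squareness of $|G:X_j|$ from the fact that every irreducible character of a GVZ-group satisfies $\chi(1)^2=|G:Z(\chi)|$. The paper quotients by $[X_i,G]$ rather than $[G',G]$, but since $G'\le X_i$ forces $[G',G]\le [X_i,G]$, the two choices lead to the same conclusion. For (3) the paper argues, as you do, that $Z(G/N)=X_1/N=(G/N)'$; it then observes that $G/N$ is a VZ-group (every nonlinear irreducible character vanishes off the center) and invokes the known characterisation that a VZ-group with center equal to derived subgroup is semi-extraspecial. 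Your appeal to ``Nenciu's structural result'' is vague but amounts to the same fact.

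Part (4), however, has a genuine gap. You propose to exhibit $A$ first, show $G/A$ is ultraspecial, and then extract the numerical identities. But once $A$ is chosen as a complement to $X_2G'/X_2$ in $X_1/X_2$, part (3) only tells you that $G/A$ is \emph{semi-extraspecial}; getting \emph{ultraspecial} requires precisely the identity $|G:X_1|=|X_1:A|^2=|X_2G':X_2|^2$ that you want to derive afterwards. There is no ``Nenciu ultraspecial criterion'' that short-circuits this. The paper breaks the circularity with an idea you have not mentioned: it observes that $G/X_2$ is a \emph{capable} VZ-group (since $X_2/[X_2,G]=Z(G/[X_2,G])$, the quotient $G/X_2$ is a central quotient of $G/[X_2,G]$), and then applies the Fern\'andez-Alcober--Moret\'o/Mann result that a capable VZ $p$-group satisfies $|G:Z(G)|=|G'|^2$. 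This yields $|G:X_1|=|X_2G':X_2|^2$ directly; combining it with the VZ bound $|G':[X_1,G]|^2\le |G:X_1|$ and the trivial inclusion $[X_1,G]\le X_2\cap G'$ then forces $X_2\cap G'=[X_1,G]$ and $|G':[X_1,G]|^2=|G:X_1|$. Only after these identities are in hand does the paper conclude that $G/A$ is ultraspecial. Your plan for (4) needs this capability argument (or a substitute for it) inserted before the construction of $A$.
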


We will include some additional results regarding nested $p$-groups of nilpotence class $2$.  We conclude this introduction with a result that recovers a theorem of Isaacs from \cite{isap}.  The main result of that paper is that if $p$ is a prime and ${\mathcal A}$ is a set of powers of $p$ that contain $1$, then there is a $p$-group $P$ of nilpotence class $2$ with $\cd P = {\mathcal A}$.  We now show that $P$ can be chosen to have the additional property that $P$ is nested by degrees.

\begin{theorem}\label{main6}
If ${\mathcal A}$ is a set of powers of a prime $p$ containing $1$, then there exists a group $G$ so that $G$ is a $p$-group of nilpotence class $2$, $G$ is nested by degrees, and $\cd G = {\mathcal A}$.
\end{theorem}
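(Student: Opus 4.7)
The plan is to construct $G$ explicitly as a quotient of a Heisenberg-type group over a local ring. Write $\mathcal{A} = \{1, p^{a_1}, \ldots, p^{a_n}\}$ with $0 < a_1 < \cdots < a_n$ (the case $n = 0$ is trivial: take $G$ abelian). Let $R = \mathbb{F}_p[t]/(t^{a_n})$ and let $H$ be the Heisenberg group over $R$, i.e., the group of $3 \times 3$ upper unitriangular matrices with entries in $R$. Then $H$ is a class-$2$ $p$-group with $Z(H) = H' \cong R$ (the upper-right entry), $H/Z(H) \cong R \oplus R$ (the other two superdiagonal entries), and commutator form $b \colon (R \oplus R) \times (R \oplus R) \to R$ given by $b((a, c), (a', c')) = ac' - a'c$. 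Since every class-$2$ $p$-group is a GVZ-group, the center of any character of $H$ above $\lambda \in \irr{Z(H)}$ is the preimage in $H$ of the radical of $\lambda \circ b$ in $R \oplus R$, and its degree is $p^d$ where $2d$ equals the codimension of that radical.

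The key computation identifies this radical via a Hankel matrix. Writing $\lambda = (\lambda_1, \ldots, \lambda_{a_n}) \in \mathbb{F}_p^{a_n}$ for the linear functional $\sum r_i t^{i-1} \mapsto \sum \lambda_i r_i$ and expanding $\lambda(ac' - a'c)$, one obtains $\mathrm{rad}(\lambda \circ b) = \ker{M_\lambda} \oplus \ker{M_\lambda} \subseteq R \oplus R$, where $M_\lambda$ is the $a_n \times a_n$ Hankel matrix with $(M_\lambda)_{ij} = \lambda_{i+j-1}$ (convention: $\lambda_l = 0$ for $l > a_n$). Setting $I(\lambda) = \max\{i : \lambda_i \neq 0\}$ (and $I(0) = 0$), I plan to verify $\mathrm{rank}(M_\lambda) = I(\lambda)$ and $\ker{M_\lambda} = t^{I(\lambda)} R$: the rows of $M_\lambda$ of index above $I(\lambda)$ are identically zero, while the upper-left $I(\lambda) \times I(\lambda)$ submatrix is anti-triangular with the nonzero constant $\lambda_{I(\lambda)}$ along the anti-diagonal and zeros above it, hence has full rank. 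It follows that $\cd{H} = \{1, p, p^2, \ldots, p^{a_n}\}$ and $H$ is nested by degrees, with the center of every character of degree $p^k$ projecting onto $t^k R \oplus t^k R$ in $H/Z(H)$.

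To cut down to $\mathcal{A}$, let $K \leq Z(H) = R$ be the $\mathbb{F}_p$-subspace spanned by $\{t^{i-1} : i \in \{1, \ldots, a_n\} \setminus \{a_1, \ldots, a_n\}\}$, and define $G = H/K$. Then $G$ is a class-$2$ $p$-group, and I check $Z(G) = Z(H)/K$ as follows: for $g \in H \setminus Z(H)$ corresponding to $(a, c) \neq (0, 0)$, the set $\{[g, h] : h \in H\}$ equals the nonzero ideal $aR + cR = (t^m) \subseteq R$ for some $m < a_n$, and $t^{a_n - 1} \in (t^m)$ does not lie in $K$. The irreducible characters of $G$ correspond bijectively to the characters of $H$ trivial on $K$, i.e., to those above some $\lambda \in K^\perp = \mathrm{span}(e_{a_1}, \ldots, e_{a_n}) \subseteq \irr{Z(H)}$; for nonzero such $\lambda$, $I(\lambda) \in \{a_1, \ldots, a_n\}$, with each value realized by $\lambda = e_{a_k}$, giving $\cd{G} = \mathcal{A}$. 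The centers of the characters of $G$ correspond to the strictly decreasing chain $R \oplus R > t^{a_1} R \oplus t^{a_1} R > \cdots > t^{a_n} R \oplus t^{a_n} R = 0$ inside $G/Z(G) \cong R \oplus R$, reversed from the degree ordering, so $G$ is nested by degrees. The main obstacle is the rank identity $\mathrm{rank}(M_\lambda) = I(\lambda)$, which is elementary once the anti-triangular zero-pattern of $M_\lambda$ is recognized.
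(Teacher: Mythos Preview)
Your proof is correct, and it takes a genuinely different route from the paper's.

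The paper's proof (Theorem~\ref{ex1}/Example~1) defines the group by explicit generators and relations: elements $x_1,\dots,x_{2n_m}$, central elements $z_1,\dots,z_m$, and commutator relations $[x_i,x_{i+n_j}]=z_j$ for $1\le i\le n_j$ and $1\le j\le m$. It then verifies the nested property by a direct case analysis of the center of $G/N$ for an arbitrary normal subgroup $N$, invoking Lemma~\ref{sixca} to conclude nestedness, and finally Lemma~\ref{strict} (class~2 $\Rightarrow$ GVZ $\Rightarrow$ nested is equivalent to nested by degrees) to read off $\cd G$.

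Your approach instead realises the group as a quotient of the Heisenberg group over the local ring $R=\mathbb{F}_p[t]/(t^{a_n})$. The conceptual gain is that the chain of centers is explained structurally: the radical of $\lambda\circ b$ is always of the form $t^k R\oplus t^k R$, i.e.\ built from an ideal of $R$, and the ideals of a local principal ring form a chain. The Hankel computation is a clean way to pin down $k=I(\lambda)$; one could equally argue, as you implicitly do via $aR+cR=(t^m)$, that $\lambda$ kills $aR$ iff $v(a)\ge I(\lambda)$. Quotienting by the span $K$ of the unwanted $t^{i-1}$ then restricts the allowable $I(\lambda)$ to $\{0,a_1,\dots,a_n\}$, which is exactly the degree set you want. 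Your check that $Z(G)=Z(H)/K$ is correct and needed to see that the chain of centers terminates where it should.

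Both constructions produce class-$2$ $p$-groups of the same order $p^{2a_n+n}$ with the same chain-of-centers indices; your route makes the source of the chain transparent (ideals of a local ring), while the paper's is slightly more elementary in that it avoids any ring theory. One minor wording quibble: in your Hankel argument the zeros lie \emph{below} the anti-diagonal (entries with $i+j-1>I(\lambda)$), not above it, but this does not affect the rank conclusion since the anti-diagonal itself is the nonzero constant $\lambda_{I(\lambda)}$.
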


We would like to thank Adriana Nenciu for providing us a copy of \cite{nested} when it was a preprint and for several helpful conversations while we were writing this paper.  We would also like to thank Shawn Burkett for his work with Magma on checking examples, for helping format the table, and for a number of useful discussions while writing the paper.

%



\section{Nested groups}

In this section, we are going to explore nested groups.  We say that $G$ is a {\it nested group} if for all characters $\chi, \psi \in \irr G$ either $Z (\chi) \le Z (\psi)$ or $Z(\psi) \le Z (\chi)$.  Note that if $N$ is a normal subgroup of $G$, then $G/N$ will also be a nested group.  In particular, the centers of the irreducible characters form a chain of subgroups.

\begin{lemma} \label{defn}
Let $G$ be a group.  Then $G$ is nested if and only if there is a chain of normal subgroups $G = X_0 > X_1 > \dots > X_n \ge 1$ so that for every character $\chi \in \irr G$ there exists an integer $0 \le i \le n$ so that $Z (\chi) = X_i$ and for each $0 \le i \le n$ there exists a character $\chi \in \irr G$ so that $X_i = Z (\chi)$.
\end{lemma}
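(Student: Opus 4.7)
The statement is essentially a bookkeeping translation between the two formulations of ``nested,'' so my plan has very little mathematical content and is mostly organizational.

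My plan is to prove the two implications directly from the definitions, using only two standard facts: that $Z(\chi)$ is a normal subgroup of $G$ for every $\chi \in \irr G$, and that $Z(1_G) = G$ (the principal character has center equal to the whole group).

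For the forward direction, I assume $G$ is nested. I form the set $\mathcal{Z} = \{Z(\chi) : \chi \in \irr G\}$. By the definition of nested, $\mathcal{Z}$ is totally ordered by inclusion, and since $\irr G$ is finite, $\mathcal{Z}$ is finite. I then list its distinct elements in strictly decreasing order $X_0 > X_1 > \cdots > X_n$. Three things need to be verified: that $X_0 = G$ (which follows from $Z(1_G) = G \in \mathcal{Z}$), that each $X_i$ is normal in $G$ (immediate since centers of irreducible characters are normal), and that the two ``every $\chi$ $\leftrightarrow$ every $i$'' conditions hold (both are immediate from the construction of $\mathcal{Z}$ as the set of distinct centers).

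For the backward direction, I assume such a chain exists. Given any two characters $\chi, \psi \in \irr G$, the hypothesis gives integers $i$ and $j$ with $Z(\chi) = X_i$ and $Z(\psi) = X_j$. If $i \le j$ then $Z(\chi) = X_i \ge X_j = Z(\psi)$, and otherwise the reverse inclusion holds. Thus any two centers are comparable, so $G$ is nested by definition.

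There is no genuine obstacle; the ``hardest'' point is really just remembering to cite that $Z(\chi) \trianglelefteq G$ and that $Z(1_G) = G$ to get the normality of the $X_i$ and the identification $X_0 = G$. The argument could reasonably be given in a few lines.
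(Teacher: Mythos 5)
Your proof is correct, and it is organized differently from the paper's. The paper builds the chain $X_0 > X_1 > \dots > X_n$ inductively: starting from $X_0 = G$ (via a linear character), it repeatedly chooses a character $\psi$ whose center is maximal among the centers properly contained in the current term, and then uses the nested hypothesis together with that maximality to argue that every $Z(\gamma)$ equals one of the chosen $X_i$'s. Your approach short-circuits all of this: you simply take $\mathcal{Z} = \{Z(\chi) : \chi \in \irr G\}$, observe that nestedness makes $\mathcal{Z}$ totally ordered and finiteness of $\irr G$ makes it finite, and enumerate its elements in strictly decreasing order. The two exhaustiveness conditions then hold by construction rather than requiring the paper's maximality argument, and you correctly flag the only two facts that need citing ($Z(\chi) \trianglelefteq G$, and $Z(1_G) = G$ so that the top of the chain is $G$). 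Both proofs rest on the same observation --- the centers form a finite chain --- but your direct enumeration is shorter and avoids the bookkeeping in the paper's inductive step (whose published write-up in fact contains some notational slips, referring to $Z_0$ and $Z_i$ where $X_0$ and $X_i$ are meant); the paper's construction, on the other hand, makes explicit a choice of representative character $\chi_i$ with $Z(\chi_i) = X_i$ at each level, which is incidental here but mirrors how the chain of centers is used later. Your treatment of the converse matches the paper's, which also dismisses it as immediate.
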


\begin{proof}
Suppose $G$ is nested.  We construct the $X_i$'s inductively.  Define $X_0 = G$ and observe that if $\lambda \in \irr G$ is linear, then $Z(\lambda) = X_0$.  Take $\chi_0$ to be some linear character.  Assume $i \ge 0$ and that $X_0 > X_1 > \dots > X_i$ have been constructed and that we have fixed $\chi_i \in \irr G$ so that $Z(\chi_i) = X_i$.  If $X_i$ properly contains no $Z (\psi)$ for all $\psi \in \irr G$, then we take $n = i$, and we terminate the chain at $X_n = X_i$.  Thus, we may suppose that $X_i$ properly contains some $Z(\psi)$ for $\psi \in \irr G$. Among such $\psi$ choose one so that $Z(\psi)$ maximal in the set $\{ Z(\gamma) \mid \gamma \in \irr G, Z(\gamma) < X_i \}$.  Take $\chi_{i+1} = \psi$ and $X_{i+1} = Z (\psi)$.  Let $\gamma \in \irr G$.  We know that $Z (\gamma) \le Z_0$, so take $i$ maximal so that $Z(\gamma) \le Z_i$.  We see that $Z(\gamma) \le Z(\chi_i)$ and $Z(\gamma) \not\le Z(\chi_{i+1})$.  The fact that $G$ is nested implies that $Z(\chi_{i+1}) < Z(\gamma)$.  Since $\chi_{i+1}$ was chosen to be maximal among the proper centers of characters in $Z(\chi_i)$, we conclude that $Z(\gamma) = Z(\chi_i)$.  This proves the forward direction.  Notice that the converse is immediate.
\end{proof}

Observe that $[X_0,G] = [G,G] = G'$.  We now show that the last term in the chain of centers will be the center of $G$.

\begin{lemma} \label{zero}
Let $G$ be a nested group with chain of centers $G = X_0 > X_1 > \dots > X_n \ge 1$.   Then $X_n = Z (G)$.
\end{lemma}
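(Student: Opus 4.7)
The plan is to exploit the standard character-theoretic identity $Z(G) = \bigcap_{\chi \in \irr G} Z(\chi)$, which is proved in, e.g., Isaacs' character theory text. Once we have this in hand, the conclusion follows almost immediately from Lemma \ref{defn}.

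More precisely, I would argue as follows. By Lemma \ref{defn}, the set $\{Z(\chi) \mid \chi \in \irr G\}$ is exactly $\{X_0, X_1, \dots, X_n\}$. Since these form a strictly descending chain of normal subgroups, their intersection is the smallest term, namely $X_n$. Combining with $Z(G) = \bigcap_{\chi \in \irr G} Z(\chi)$, we obtain $Z(G) = X_n$.

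The only step that requires a moment's thought is recalling why $Z(G) = \bigcap_{\chi \in \irr G} Z(\chi)$. I would either cite this (it is a standard exercise: an element $g$ lies in $Z(\chi)$ for every $\chi \in \irr G$ iff $g$ acts as a scalar in every irreducible representation, iff the conjugacy class of $g$ is $\{g\}$, iff $g \in Z(G)$) or quickly sketch it if the paper does not already rely on it implicitly. There is no real obstacle here; the lemma is essentially a one-line consequence of the definition of the chain of centers together with the characterization of $Z(G)$ as the intersection of centers of irreducible characters.
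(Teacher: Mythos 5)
Your proof is correct and is essentially the same as the paper's: both rest on the identity $Z(G) = \bigcap_{\chi \in \irr G} Z(\chi)$ together with the fact (Lemma \ref{defn}) that the set of centers is exactly $\{X_0, \dots, X_n\}$, so the intersection is the smallest term $X_n$. The paper merely spells out the two containments ($X_n \le Z(\chi)$ for all $\chi$, and $Z(G) \le Z(\chi) = X_n$ for some $\chi$) that your chain observation packages in one step.
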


\begin{proof}
We know that $Z (G) = \cap_{\chi \in \irr G} Z (\chi)$.  For each $\chi \in \irr G$, we have $Z (\chi) = X_i$ for some $i$, so $X_n \le X_i = Z(\chi)$.  This implies that $X_n \le Z(G)$.  On the other hand, there exists $\chi \in \irr G$ so that $Z (\chi) = X_n$, and so, $Z (G) \le Z (\chi) = X_n$.  We conclude that $X_n = Z(\chi)$.
\end{proof}

We now start to look at the commutator subgroups associated with the chain of centers.  This next lemma is an immediate consequence of the fact that $Z(\chi)/\ker {\chi} = Z(G/\ker {\chi})$.  Since it is helpful to have this form of the lemma, we include it here.

\begin{lemma}\label{twoa}
Let $G$ be a nested group with chain of centers $G = X_0 > X_1 > \dots > X_n \ge 1$ and consider a character $\chi \in \irr G$.  If $Z (\chi) = X_i$, then $[X_i,G] \le \ker{\chi}$.
\end{lemma}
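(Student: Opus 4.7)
The plan is to invoke the standard character-theoretic identity $Z(\chi)/\ker{\chi} = Z(G/\ker{\chi})$, which holds for any irreducible character $\chi$ of $G$ (see, e.g., Isaacs' character theory text). Given this identity, the lemma collapses to a one-line observation, so I would not expect any real obstacle here; the statement is being recorded primarily for convenience of later reference.

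First, I would note that $\ker{\chi} \le Z(\chi) = X_i$, so the quotient $X_i/\ker{\chi}$ is a well-defined subgroup of $G/\ker{\chi}$. Then, applying the identity above with our hypothesis $Z(\chi) = X_i$, I would write
\[
X_i/\ker{\chi} \;=\; Z(\chi)/\ker{\chi} \;=\; Z(G/\ker{\chi}).
\]
In particular, $X_i/\ker{\chi}$ is central in $G/\ker{\chi}$, so every commutator $[x,g]$ with $x \in X_i$ and $g \in G$ lies in $\ker{\chi}$. Hence $[X_i,G] \le \ker{\chi}$, as required.

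The only thing to be careful about is making sure the cited identity is applied in the correct direction: the equality $Z(\chi)/\ker{\chi} = Z(G/\ker{\chi})$ is what gives centrality of $X_i/\ker{\chi}$ in the full quotient $G/\ker{\chi}$, rather than merely in some smaller subgroup, and it is this centrality in all of $G/\ker{\chi}$ that yields $[X_i,G] \le \ker{\chi}$. No feature of the nested hypothesis beyond the existence of the chain (so that $X_i$ is well-defined as the center of some irreducible character) is needed in the argument.
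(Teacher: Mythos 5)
Your proof is correct and follows exactly the route the paper itself indicates: the paper gives no formal proof but states that the lemma ``is an immediate consequence of the fact that $Z(\chi)/\ker{\chi} = Z(G/\ker{\chi})$,'' which is precisely the identity you invoke, and your deduction that centrality of $X_i/\ker{\chi}$ in $G/\ker{\chi}$ forces $[X_i,G] \le \ker{\chi}$ is the intended one-line argument. You also correctly observe that the nested hypothesis plays no role beyond guaranteeing $X_i = Z(\chi)$ for the given character.
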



We now obtain a generalization of Lemma \ref{zero} regarding the quotient group $G/[X_i,G]$ for each value of $i$ with $i = 0, \dots, n$.

\begin{lemma} \label{threea}
Let $G$ be a nested group with chain of centers $G = X_0 > X_1 > \dots > X_n \ge 1$.  Then $X_i/[X_i,G] = Z (G/[X_i,G])$ for $i = 0, \dots, n$.
\end{lemma}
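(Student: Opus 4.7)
The plan is to prove the two containments separately, using the two preceding lemmas.

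The inclusion $X_i/[X_i,G] \le Z(G/[X_i,G])$ is essentially tautological: by the definition of the commutator subgroup, for every $x \in X_i$ and every $g \in G$, the commutator $[x,g]$ lies in $[X_i,G]$, so the coset $x[X_i,G]$ commutes with every $g[X_i,G]$ in the quotient group. Thus $X_i/[X_i,G]$ lies in the center of $G/[X_i,G]$.

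For the reverse inclusion $Z(G/[X_i,G]) \le X_i/[X_i,G]$, I would invoke Lemma \ref{defn} to produce an irreducible character $\chi \in \irr G$ with $Z(\chi) = X_i$. Lemma \ref{twoa} then gives $[X_i,G] \le \ker\chi$, so $\chi$ descends to an irreducible character $\overline{\chi}$ of $G/[X_i,G]$. Using the standard identity $Z(\chi)/\ker{\chi} = Z(G/\ker{\chi})$ in the quotient, one verifies that the center of $\overline{\chi}$ is exactly $X_i/[X_i,G]$. Since $Z(G/[X_i,G])$ is contained in the center of every irreducible character of $G/[X_i,G]$, we obtain $Z(G/[X_i,G]) \le X_i/[X_i,G]$, which combined with the previous containment completes the proof.

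I do not anticipate any real obstacle here: the argument reduces immediately to Lemma \ref{defn} (to ensure $X_i$ is actually realized as $Z(\chi)$ for some $\chi$) together with Lemma \ref{twoa} (to pass to the quotient by $[X_i,G]$). The only mild subtlety is confirming that the center of $\chi$ computed in the quotient $G/[X_i,G]$ is the image of the center computed in $G$, which follows from the standard fact about $Z(\chi)$ used already in the derivation of Lemma \ref{twoa}.
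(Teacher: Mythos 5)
Your proposal is correct and follows the same route as the paper's own proof: the containment $X_i/[X_i,G] \le Z(G/[X_i,G])$ is immediate, and the reverse containment comes from choosing $\chi \in \irr G$ with $Z(\chi) = X_i$ (guaranteed by the chain of centers, i.e., Lemma \ref{defn}) and applying Lemma \ref{twoa} to get $[X_i,G] \le \ker{\chi}$, so that $Z(G/[X_i,G])$ is trapped inside $Z(\chi)/[X_i,G] = X_i/[X_i,G]$. You have merely spelled out the quotient-center verification that the paper compresses into ``This implies that $Z \le X_i$,'' so there is nothing to add.
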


\begin{proof}
Let $Z/[X_i,G] = Z (G/[X_i,G])$.  It is obvious that 
$$X_i/[X_i,G] \le Z(G/[X_i,G]),$$ 
so $X_i \le Z$.  On the other hand, there exists $\chi \in \irr G$ with $Z (\chi) = X_i$ and by Lemma \ref{twoa}, we have $[X_i,G] \le \ker {\chi}$.  This implies that $Z \le X_i$.  We conclude that $Z = X_i$
\end{proof}

We now can show that $[X_{i-1},G] > [X_i,G]$.

\begin{corollary} \label{threec}
Let $G$ be a nested group with chain of centers $G = X_0 > X_1 > \dots > X_n \ge 1$.  If $1 \le i \le n$, then $[X_i,G] < [X_{i-1},G]$.
\end{corollary}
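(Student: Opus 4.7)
The plan is to prove the containment $[X_i,G] \le [X_{i-1},G]$ first (which is immediate from $X_i < X_{i-1}$) and then show the inclusion is strict by deriving a contradiction with Lemma \ref{threea}.

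The strategy for strictness is to assume for contradiction that $[X_{i-1},G] = [X_i,G]$ (equivalently, that $[X_{i-1},G] \le [X_i,G]$) and work in the quotient $\overline G = G/[X_i,G]$. The assumption $[X_{i-1},G] \le [X_i,G]$ translates exactly to the statement that the image of $X_{i-1}$ in $\overline G$ is central, i.e.\ $X_{i-1}/[X_i,G] \le Z(\overline G)$. But by Lemma \ref{threea} applied at index $i$, we already know $Z(\overline G) = X_i/[X_i,G]$. Combining these gives $X_{i-1} \le X_i$, contradicting the strict inclusion $X_i < X_{i-1}$ in the chain of centers.

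Putting the two halves together yields $[X_i,G] < [X_{i-1},G]$, which is exactly the corollary. There is no real obstacle here: the content is entirely in correctly invoking Lemma \ref{threea} to identify the center of $G/[X_i,G]$, and in reading off the commutator condition as a centrality statement in the quotient. No induction, no character-theoretic input beyond what the previous lemma already encodes, and no case analysis on $i$ are needed.
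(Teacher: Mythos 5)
Your proof is correct and follows essentially the same route as the paper: both argue by contradiction from $[X_{i-1},G] = [X_i,G]$ and use Lemma \ref{threea} to identify $Z(G/[X_i,G]) = X_i/[X_i,G]$, forcing $X_{i-1} \le X_i$ against the strict chain. The only cosmetic difference is that you invoke Lemma \ref{threea} once at index $i$ and read the hypothesis as centrality of the image of $X_{i-1}$, whereas the paper invokes it at both indices $i-1$ and $i$ and equates the two descriptions of the center of the (same) quotient.
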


\begin{proof}
If $[X_{i-1},G] = [X_i,G]$, then Lemma \ref{threea} would imply that 
$$
X_i/[X_i,G] = Z (G/[X_i,G]) = Z(G/[X_{i-1},G]) = X_{i-1}/[X_{i-1},G],
$$ 
and we would have $X_i = X_{i-1}$ which is a contradiction.
\end{proof}

The following lemma determines when the converse of Lemma \ref{twoa} holds.

\begin{lemma} \label{threeb}
Let $G$ be a nested group with chain of centers $G = X_0 > X_1 > \dots > X_n \ge 1$ and consider a character $\chi \in \irr G$.  Then $Z(\chi) = X_i$ if and only if $[X_i,G] \le \ker {\chi}$ and $[X_{i-1},G] \not\le \ker {\chi}$.
\end{lemma}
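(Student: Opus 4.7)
The plan is to prove both directions by converting statements about $Z(\chi)$ into statements about the center of $G/\ker\chi$ via the standard identification $Z(\chi)/\ker\chi = Z(G/\ker\chi)$, and then repeatedly invoking Lemma~\ref{twoa} together with monotonicity of the commutator in its first argument.

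For the forward direction, assume $Z(\chi) = X_i$. Lemma~\ref{twoa} immediately yields $[X_i,G] \le \ker\chi$. For the second assertion I would argue by contradiction: if $[X_{i-1},G] \le \ker\chi$, then $X_{i-1}\ker\chi/\ker\chi$ is contained in $Z(G/\ker\chi) = Z(\chi)/\ker\chi$, so $X_{i-1} \le Z(\chi) = X_i$, contradicting the strictness of the chain of centers.

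For the converse, assume $[X_i,G] \le \ker\chi$ and $[X_{i-1},G] \not\le \ker\chi$. Because $G$ is nested, Lemma~\ref{defn} gives $Z(\chi) = X_j$ for some $j \in \{0,\dots,n\}$. The first hypothesis, via the central quotient identification, forces $X_i \le Z(\chi) = X_j$, hence $j \le i$. If the inequality were strict then $j \le i-1$, so $X_{i-1} \le X_j$, and then monotonicity of the commutator together with Lemma~\ref{twoa} applied to $\chi$ yields $[X_{i-1},G] \le [X_j,G] \le \ker\chi$, contradicting the second hypothesis. Therefore $j = i$, as desired.

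No step here looks delicate; the only thing to be careful about is the direction in which monotonicity of the commutator is used (in its first argument), and the fact that the strictness $X_{i-1} > X_i$ in the chain of centers is what powers the contradiction in the forward direction.
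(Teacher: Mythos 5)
Your proof is correct and follows essentially the same route as the paper's: the forward direction via Lemma~\ref{twoa} and the identification $Z(\chi)/\ker\chi = Z(G/\ker\chi)$, and the converse by noting $X_i \le Z(\chi)$ and deriving $[X_{i-1},G] \le \ker\chi$ for a contradiction when $Z(\chi) \ne X_i$. Your version merely makes explicit (via the index $j$ from Lemma~\ref{defn} and monotonicity of $[\,\cdot\,,G]$) what the paper compresses into ``the fact that centers form a chain implies $X_{i-1} \le Z(\chi)$.''
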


\begin{proof}
If $Z(\chi) = X_i$, then by Lemma \ref{twoa} we have $[X_i,G] \le \ker{\chi}$.  If $[X_{i-1},G] \le \ker {\chi}$, then $X_{i-1}/\ker {\chi} \le Z(G/\ker {\chi}) = Z(\chi)/\ker{\chi} = X_i/\ker {\chi}$, and we would have $X_{i-1} < X_i$, a contradiction.  Thus, $[X_{i-1},G] \not\le Z(\chi)$.

Conversely, suppose that $[X_i,G] \le \ker{\chi}$ and $[X_{i-1},G] \not\le \ker {\chi}$.  Observe that $[X_i,G] \le \ker{\chi}$ implies that $X_i \le Z(\chi)$.  If $Z(\chi) \ne X_i$, then the fact that centers form a chain implies that $X_{i-1} \le Z(\chi)$ which would yield $[X_{i-1},G] \le \ker{\chi}$ and since this is a contradiction, we conclude that $X_i = Z(\chi)$.
\end{proof}

If $N$ is a normal subgroup of $G$, then it is not difficult to see that $G/N$ will be nested.  Notice that if $X/N$ is a term in the chain of centers for $G/N$, then there is a character $\chi \in \irr {G/N}$ so that $Z(\chi)/N= X/N$, but we may view $\chi \in \irr G$ and so, $Z(G) = X_i$ for some integer $i$ with $ 0 \le i \le n$.  Hence, the chain of centers for $G/N$ will consist of quotients coming from a subchain of the chain of centers for $G/N$.  Using Lemma \ref{threeb}, we can determine which of the $X_i$'s occur in the chain of centers for $G/N$.  This can be thought of a version of Corollary \ref{threec} for quotients.

\begin{lemma}\label{ten}
Let $G$ be a nested with chain of centers $G = X_0 > X_1 > \dots > X_n \ge 1$, and let $N$ be a normal subgroup of $G$.  If $1 \le i \le n$, then there exists a character $\chi \in \irr {G/N}$ with $Z(\chi) = X_i$ if and only if $N[X_i,G] < N[X_{i-1},G]$.
\end{lemma}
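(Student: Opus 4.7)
The plan is to derive this from Lemma \ref{threeb} by viewing characters of $G/N$ as irreducible characters of $G$ whose kernels contain $N$. Under this identification, the condition $Z(\chi) = X_i$ still makes sense (and agrees with the chain of centers for $G$), and by Lemma \ref{threeb} it is equivalent to the two-sided condition $[X_i,G] \le \ker{\chi}$ and $[X_{i-1},G] \not\le \ker{\chi}$, together with $N \le \ker{\chi}$. Thus I will rephrase the problem as one about the existence of an irreducible character of $G$ whose kernel contains $N[X_i,G]$ but not $N[X_{i-1},G]$.

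For the forward direction, if such a $\chi \in \irr{G/N}$ exists with $Z(\chi) = X_i$, then $N \le \ker{\chi}$ and Lemma \ref{threeb} gives $[X_i,G] \le \ker{\chi}$ and $[X_{i-1},G] \not\le \ker{\chi}$. Combining yields $N[X_i,G] \le \ker{\chi}$ but $N[X_{i-1},G] \not\le \ker{\chi}$. Since $X_i \le X_{i-1}$ forces $N[X_i,G] \le N[X_{i-1},G]$ automatically, equality of these two subgroups would force $N[X_{i-1},G] \le \ker{\chi}$, contradicting the above; hence $N[X_i,G] < N[X_{i-1},G]$.

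For the converse, assume $N[X_i,G] < N[X_{i-1},G]$. Pass to the quotient $G/N[X_i,G]$; the subgroup $N[X_{i-1},G]/N[X_i,G]$ is then a nontrivial normal subgroup of $G/N[X_i,G]$. Since a nontrivial normal subgroup is never contained in the kernel of every irreducible character (the intersection of all kernels of $\irr{G/N[X_i,G]}$ is trivial), there exists $\chi \in \irr{G/N[X_i,G]}$ with $N[X_{i-1},G] \not\le \ker{\chi}$. Inflating, $\chi \in \irr G$ satisfies $N[X_i,G] \le \ker{\chi}$ and $N[X_{i-1},G] \not\le \ker{\chi}$; in particular $N \le \ker{\chi}$, so $\chi \in \irr{G/N}$, and $[X_i,G] \le \ker{\chi}$ while $[X_{i-1},G] \not\le \ker{\chi}$. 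Lemma \ref{threeb} then yields $Z(\chi) = X_i$, as required.

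There is no real obstacle here: the argument is essentially bookkeeping with kernels, the only care needed being to check that the strict containment $N[X_i,G] < N[X_{i-1},G]$ is indeed strong enough to produce the character required in the backward direction (which it is, via the standard fact that no nontrivial normal subgroup lies in every irreducible kernel).
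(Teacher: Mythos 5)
Your proof is correct and follows essentially the same route as the paper: both directions pivot on Lemma \ref{threeb} via the kernel conditions $N[X_i,G] \le \ker{\chi}$ and $N[X_{i-1},G] \not\le \ker{\chi}$, and your forward direction is the paper's argument verbatim. The only difference is cosmetic: in the converse, where the paper takes $\chi \in \irr{G \mid \lambda}$ for a nonprincipal $\lambda$ of the section $N[X_{i-1},G]/N[X_i,G]$ and checks kernels through ${\rm Core}_G(\ker{\lambda})$, you instead invoke the triviality of $\bigcap_{\theta \in \irr{G/N[X_i,G]}} \ker{\theta}$ to find a character of $G/N[X_i,G]$ not killing that nontrivial normal section, a mild streamlining of the same existence step.
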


\begin{proof}
Suppose first that there exists a character $\chi \in \irr {G/N}$ with $Z(\chi) = X_i$.  By Lemma \ref{threeb}, we know that $[X_i,G] \le \ker{\chi}$ and $[X_{i-1},G] \not\le \ker{\chi}$.  This implies that $N[X_i,G] \le \ker{\chi}$, and if $N[X_i,G] = N[X_{i-1},G]$, then it would follow that $[X_{i-1},G] \le \ker{\chi}$ which is a contradiction, so we must have $N[X_i,G] < N[X_{i-1},G]$.  Conversely, suppose that $N[X_i,G] < N[X_{i-1},G]$.  We can find a nonprincipal character 
$$\lambda \in \irr {N[X_i,G]/N[X_{i-1},G]},$$ 
and consider a character $\chi \in \irr {G \mid \lambda}$.  Observe that $N \le \ker{\lambda}$, and since $N$ is normal in $G$, we have $N \le {\rm Core}_G (\ker{\lambda}) \le \ker {\chi}$, so $\chi \in \irr {G/N}$.  Also, $[X_i,G] \le \ker{\lambda} \le \ker{\chi}$.  If $[X_{i-1},G] \le \ker {\chi}$, then $N[X_{i-1},G] \le \ker {\chi}$ and this contradicts $\lambda$ nonprincipal.  Thus, $[X_{i-1},G] \not\le \ker {\chi}$.  By Lemma \ref{threeb}, we conclude that $Z (\chi) = X_i$.
\end{proof}

\section{Characterizations of nested groups}

The goal of this section is to find alternate characterizations of nested groups.  We first show that the center of any quotient of a nested group will be the quotient of one of the $X_i$'s.

\begin{lemma} \label{sixa}
Let $G$ be a nested group with chain of centers $G = X_0 > X_1 > \dots > X_n \ge 1$.  If $N$ is a normal subgroup of $G$ such that $[X_i,G] \le N$ and $[X_{i-1},G] \not\le N$ for some $i \in \{ 0, \dots, n \}$, then $Z(G/N) = X_i/N$.
\end{lemma}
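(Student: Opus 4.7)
My plan is to establish both containments of the equality $Z(G/N) = X_i/N$ directly. The inclusion $X_i/N \le Z(G/N)$ follows at once from the hypothesis $[X_i,G] \le N$, since this says every element of $X_i$ commutes with every element of $G$ modulo $N$.

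For the reverse inclusion $Z(G/N) \le X_i/N$, the key idea is to exhibit a single character $\chi \in \irr{G/N}$ whose center in $G/N$ equals $X_i/N$; because $Z(G/N)$ is contained in the center of every irreducible character of $G/N$, such a $\chi$ forces $Z(G/N) \le X_i/N$. To produce this $\chi$, I would invoke Lemma \ref{ten}, which guarantees the existence of $\chi \in \irr{G/N}$ with $Z(\chi) = X_i$ provided that $N[X_i,G] < N[X_{i-1},G]$. The hypotheses give precisely this strict inequality: $[X_i,G] \le N$ implies $N[X_i,G] = N$, while $[X_{i-1},G] \not\le N$ implies $N < N[X_{i-1},G]$. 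Viewing $\chi$ as an irreducible character of $G/N$, its center there is $Z(\chi)/N = X_i/N$, which completes the argument.

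The only place one must tread carefully is the boundary case $i = 0$, where $X_{i-1}$ is undefined and Lemma \ref{ten} does not apply. In that case the hypothesis $[X_0,G] = G' \le N$ on its own forces $G/N$ to be abelian, so $Z(G/N) = G/N = X_0/N$ is immediate. Apart from this routine boundary check, I do not anticipate any real obstacle; the proof is essentially a one-line application of Lemma \ref{ten} combined with the defining property of the center of a group as the intersection of centers of its irreducible characters.
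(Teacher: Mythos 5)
Your proof is correct and takes essentially the same route as the paper's: both obtain $X_i/N \le Z(G/N)$ from the hypothesis $[X_i,G] \le N$, and both apply Lemma \ref{ten} with the strict inequality $N = N[X_i,G] < N[X_{i-1},G]$ to produce a character $\chi \in \irr{G/N}$ with $Z(\chi) = X_i$, forcing the reverse containment. Your explicit treatment of the boundary case $i = 0$ (where $G' \le N$ makes $G/N$ abelian) is a minor point of extra care that the paper leaves implicit.
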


\begin{proof}
Let $Z/N = Z (G/N)$.  Observe that $Z = \cap_{\theta \in \irr {G/N}} Z (\theta)$.  Let $\theta \in \irr {G/N}$, and observe that $[X_i,G] \le N \le \ker{\theta}$.  Thus, $X_i \le Z (\theta)$.  Since this is true for all characters $\theta \in \irr {G/N}$, we have $X_i \le \cap_{\theta \in \irr {G/N}} Z(\theta) = Z$.  On the other hand, since $[X_{i-1},G] \not\le N$, we have $N[X_i,G] = N < N[X_{i-1},G]$.  By Lemma \ref{ten}, we see that $Z(\chi) = X_i$ for some $\chi \in \irr {G/N}$.  Observe that $Z \le Z(\chi) = X_i$, and so, $Z = X_i$.
\end{proof}

The following is a sufficient condition for a group to be nested.


\begin{lemma} \label{sixca}
If $G$ is a group with chain of normal subgroups $G = X_0 > X_1 > \dots > X_n \ge 1$ so that every normal subgroup $N$ of $G$ satisfies $Z (G/N) = X_i/N$ for some $i \in \{ 0, \dots, n \}$, then $G$ is a nested group. Furthermore, if for every $i \in \{ 0, \dots, n \}$ there exists a normal subgroup $N$ so that $Z(G/N) = X_i/N$, then $G = X_0 > X_1 > \dots >X_n \ge 1$ is the chain of centers for $G$.
\end{lemma}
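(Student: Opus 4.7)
The plan is to bootstrap from the hypothesis on normal subgroups $N$ to a statement about irreducible characters by choosing $N = \ker{\chi}$. Specifically, for any $\chi \in \irr G$ set $N = \ker{\chi}$; then by the standard identity $Z(\chi)/\ker{\chi} = Z(G/\ker{\chi})$, and the hypothesis forces $Z(G/N) = X_i/N$ for some $i$, whence $Z(\chi) = X_i$. So every center of an irreducible character is one of the $X_i$, and since the $X_i$ form a chain by assumption, so do the $Z(\chi)$, proving that $G$ is nested.

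For the ``furthermore'' clause, I would invoke Lemma \ref{defn}: to show $G = X_0 > X_1 > \dots > X_n \ge 1$ is \emph{the} chain of centers of $G$, it suffices to exhibit, for each $i \in \{0,\dots,n\}$, an irreducible character $\chi_i \in \irr G$ with $Z(\chi_i) = X_i$. By hypothesis there exists a normal subgroup $N$ of $G$ with $Z(G/N) = X_i/N$. Since the center of any finite group is the intersection of the centers of its irreducible characters, applied to $H = G/N$ this gives
\[
X_i/N \;=\; Z(G/N) \;=\; \bigcap_{\theta \in \irr{G/N}} Z_{G/N}(\theta) \;=\; \Bigl(\bigcap_{\theta \in \irr{G/N}} Z(\theta)\Bigr)\big/N,
\]
where on the right $\theta$ is viewed as an element of $\irr G$ with $N \le \ker{\theta}$, so $Z_{G/N}(\theta) = Z(\theta)/N$.

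By the first part applied to each $\theta$ in this intersection, every $Z(\theta)$ equals some $X_{j(\theta)}$, and these form a finite chain among the $X_j$'s. The intersection of a finite chain of subgroups is its minimum element, hence is actually attained: there exists $\theta_0 \in \irr{G/N}$ with $Z(\theta_0) = \bigcap_\theta Z(\theta) = X_i$. Taking $\chi_i := \theta_0$ supplies the required character, and the ``furthermore'' clause follows from Lemma \ref{defn}. The only subtle point, and the main thing to spell out carefully, is the passage from $Z(G/N) = X_i/N$ to the existence of a single character $\chi$ with $Z(\chi) = X_i$; once one observes that the relevant intersection runs over a chain and is therefore attained, the argument is immediate.
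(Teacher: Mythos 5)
Your proposal is correct and follows essentially the same route as the paper: both parts hinge on taking $N = \ker{\chi}$ together with $Z(\chi)/\ker{\chi} = Z(G/\ker{\chi})$ for the nesting claim, and on writing $X_i/N = Z(G/N)$ as the intersection of the $Z(\theta)$ for $\theta \in \irr{G/N}$ to produce a character with center exactly $X_i$. The only cosmetic difference is in the last step, where you observe directly that the intersection of a finite chain is attained at its minimum, while the paper reaches the same conclusion by contradiction, noting that otherwise $X_{i-1}$ would lie in the intersection, contradicting $X_i < X_{i-1}$.
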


\begin{proof}
Let $\chi \in \irr G$, and let $N = \ker{\chi}$.  From the hypotheses, $Z(\chi)/N = Z(G/N) = X_i/N$ for some $i \in \{ 0, \dots, n\}$.  It follows that $Z(\chi) = X_i$.  If $\psi \in \irr G$, then similarly, $Z (\psi) = X_j$ for some $j$.  Since $X_i \le X_j$ or $X_j \le X_i$, we see that $G$ is a nested group.  Notice that $X_0 = G = \ker {1_G}$.  Thus, we consider an integer $i$ with $1 \le i \le n$.  Suppose there exists a normal subgroup $N$ so that $Z(G/N) = X_i/N$.  It follows that $X_i = \cap_{\theta \in \irr {G/N}} Z (\theta)$.  If $\theta \in \irr {G/N}$, then $Z(\theta) \ge X_i$ and if $Z(\theta) \ne X_i$, then $X_{i-1} \le Z(\theta)$.  Furthermore, if $Z (\theta) \ne X_i$ for all $\theta \in \irr {G/N}$, then $X_{i-1} \le \cap_{\theta \in \irr {G/N}} Z (\theta) = X_i$ which contradicts $X_i < X_{i-1}$.  Therefore, there exists a character $\theta \in \irr {G/N} \subseteq \irr G$ so that $Z (\theta) = X_i$.  We conclude that $G = X_0 > X_1 > \dots >X_n \ge 1$ is the chain of centers for $G$.
\end{proof}

We close this section by proving Theorem \ref{main3}.

\begin{proof}[Proof of Theorem \ref{main3}]
The fact that (1) implies (2) follows from Lemmas \ref{defn} and \ref{sixa}.  Suppose (2).  Let $N$ and $M$ be normal subgroups of $G$.  By (2), we see that $Z_N = X_i$ and $Z_M = X_j$ for some integers $0 \le i,j \le n$.  It follows that $Z_N \le Z_M$ or $Z_M \le Z_N$.  Hence, (3) is proved.  Suppose (3).  If $\chi, \psi \in \irr G$, then $Z(\chi) = Z_{\ker \chi}$ and $Z (\psi) = Z_{\ker \psi}$.  By (3), it follows that $Z(\chi) \le Z(\psi)$ or $Z(\psi) \le Z(\chi)$, and this proves $G$ is nested.  Therefore, (1) is proved. 

We now suppose for every $i$ with $0\le i \le n$ that there exists a normal subgroup $N$ so that $Z(G/N) = X_i/N$.  Fix an integer $i$, and let $N$ be the normal subgroup so that $Z(G/N) = X_i/N$.  It follows that $X_i = \cap_{\theta \in \irr {G/N}} Z (\theta)$.  If $\theta \in \irr {G/N}$, then $Z(\theta) \ge X_i$ and if $Z(\theta) \ne X_i$, then $X_{i-1} \le Z (\theta)$.  Furthermore, if $Z (\theta) \ne X_i$ for all $\theta \in \irr {G/N}$, then $X_{i-1} \le \cap_{\theta \in \irr {G/N}} Z (\theta) = X_i$ which contradicts $X_i < X_{i-1}$.  Therefore, there exists a character $\theta \in \irr {G/N} \subseteq \irr G$ so that $Z (\theta) = X_i$.  We conclude that $G = X_0 > X_1 > \dots >X_n \ge 1$ is the chain of centers for $G$.
\end{proof}

\section{Factors of the chain of centers}

In this section, we prove the various results about the factors of the chain of centers for a nested group.  We begin with a result that is a consequence of Lemma \ref{sixa}.  This gives a restriction on the normal subgroups of nested groups.

\begin{lemma}\label{sixb}
Let $G$ be a nested group with chain of centers $G = X_0 > X_1 > \dots > X_n \ge 1$.  If $N$ is a normal subgroup, then for each $i = 1, \dots, n$, either $[X_{i-1},G] \le N$ or $N \le X_i$.
\end{lemma}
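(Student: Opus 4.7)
The plan is to apply Lemma \ref{sixa} to locate the center of $G/N$ inside the chain $X_0 > X_1 > \dots > X_n$, and then read off the claimed dichotomy from a simple index comparison.

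First I would show that for any normal subgroup $N$ of $G$ there exists some $j \in \{0,\dots,n\}$ with $Z(G/N) = X_j/N$. Let $j$ be the smallest index with $[X_j,G] \le N$; such a $j$ exists because $X_n = Z(G)$ by Lemma \ref{zero}, so $[X_n,G] = 1 \le N$. If $j \ge 1$, the minimality of $j$ forces $[X_{j-1},G] \not\le N$, so Lemma \ref{sixa} directly gives $Z(G/N) = X_j/N$. If instead $j = 0$, then $G' = [X_0,G] \le N$, so $G/N$ is abelian and $Z(G/N) = G/N = X_0/N$. In both cases I would record the two consequences that matter for the argument: (i) $N \le X_j$, since $X_j/N$ is a genuine subgroup of $G/N$; and (ii) $[X_j,G] \le N$, since $X_j/N$ lies in $Z(G/N)$.

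Then I would fix $i \in \{1,\dots,n\}$ and split on the position of $j$ relative to $i$. If $j \ge i$, the fact that the chain $X_0 > X_1 > \dots > X_n$ is strictly decreasing gives $X_j \le X_i$, and combining with (i) yields $N \le X_j \le X_i$, which is the right-hand alternative. If instead $j \le i-1$, then $X_{i-1} \le X_j$, so combining with (ii) we obtain $[X_{i-1},G] \le [X_j,G] \le N$, which is the left-hand alternative.

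I do not anticipate a genuine obstacle: once the identification $Z(G/N) = X_j/N$ is in hand, the remainder is simple bookkeeping on the chain. The only subtlety is the boundary case $j=0$, which lies outside the literal hypothesis of Lemma \ref{sixa} and must be absorbed by the direct observation that $X_0 = G$ makes $Z(G/N) = X_0/N$ automatic whenever $G/N$ is abelian.
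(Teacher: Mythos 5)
Your proof is correct and follows essentially the same route as the paper: both arguments take the minimal index $j$ with $[X_j,G] \le N$ (guaranteed by Lemma \ref{zero}) and apply Lemma \ref{sixa} to identify $Z(G/N) = X_j/N$, then read off the dichotomy from the position of $j$ relative to $i$. The only difference is cosmetic: the paper argues contrapositively (assuming $[X_{i-1},G] \not\le N$, which forces $j \ge i$ and thus avoids the $j = 0$ boundary case that you handle directly), while you give a direct case split; your explicit treatment of $j = 0$ is a reasonable bit of extra care, since the literal hypothesis of Lemma \ref{sixa} does not cover it.
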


\begin{proof}
Suppose that $[X_{i-1},G] \not\le N$.  By Lemma \ref{zero}, we know that $[X_n,G] = 1$.  So there is a minimal positive integer $j$ so that $[X_j,G] \le N$, and observe that we must have $j \ge i$.  By the minimality of $j$, we $[X_{j-1},G] \not\le N$.  Applying Lemma \ref{sixa}, we have $Z(G/N) = X_j/N$.  This implies that $N \le X_j \le X_i$.
\end{proof}

We now obtain a key result about the structure of $G/X_i$.  Note that we are not assuming that $G$ is nested in this lemma.  However, in light of Lemma \ref{sixb}, we know that this result will apply when $G$ is nested.

\begin{lemma} \label{sixc}
Let $G$ be a group with chain of normal subgroups $G = X_0 > X_1 > \dots > X_n \ge 1$ so that every normal subgroup $N$ of $G$ satisfies either $[X_{i-1},G] \le N$ or $N \le X_i$ for all $i = 1, \dots, n$.  Then for all $i = 1, \dots n$, one of the following occurs: (1) $[X_{i-1},G] \le X_i$ or (2) $[X_{i-1},G]X_i/X_i$ is the unique minimal normal subgroup of $G/X_i$.
\end{lemma}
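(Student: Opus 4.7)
The plan is to fix an index $i\in\{1,\dots,n\}$, assume conclusion (1) fails---that is, $[X_{i-1},G]\not\le X_i$---and deduce conclusion (2). Writing $Y=[X_{i-1},G]X_i$, what must be shown is that $Y/X_i$ is the unique minimal normal subgroup of $G/X_i$.

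The core step is to show that $Y/X_i$ is contained in \emph{every} nontrivial normal subgroup of $G/X_i$. Let $N/X_i$ be such a subgroup, so $N$ is normal in $G$ with $X_i<N$ and in particular $N\not\le X_i$. The hypothesis of the lemma, applied at this index $i$, then forces the alternative $[X_{i-1},G]\le N$; hence $Y\le N$, i.e., $Y/X_i\le N/X_i$. Since $X_i<X_{i-1}\le G$, the quotient $G/X_i$ is nontrivial and therefore possesses at least one minimal normal subgroup, and each such subgroup contains $Y/X_i$ by what we have just shown.

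The failure of (1) guarantees that $Y/X_i$ is itself a \emph{nontrivial} normal subgroup of $G/X_i$, so any minimal normal subgroup of $G/X_i$ containing $Y/X_i$ must in fact equal $Y/X_i$ by minimality. This shows simultaneously that $Y/X_i$ is minimal normal and that it is the only minimal normal subgroup of $G/X_i$, giving (2). I do not foresee any real obstacle here: the hypothesis is essentially engineered to produce exactly this dichotomy, so the argument reduces to the short observation above, with the only subtlety being to remember to verify nontriviality of $Y/X_i$ before invoking minimality.
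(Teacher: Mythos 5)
Your proposal is correct and takes essentially the same route as the paper: apply the hypothesis at index $i$ to a normal subgroup $N$ with $N\not\le X_i$ to conclude $[X_{i-1},G]\le N$, and then invoke minimality to force $N=[X_{i-1},G]X_i$. The paper argues directly with a minimal normal subgroup $N/X_i$ of $G/X_i$, whereas you first show $Y/X_i=[X_{i-1},G]X_i/X_i$ lies in every nontrivial normal subgroup and separately note nontriviality of $G/X_i$ to get existence, but this is the same argument with the bookkeeping made slightly more explicit.
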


\begin{proof}
Suppose that $[X_{i-1},G] \not\le X_i$, and suppose $N/X_i$ is a minimal normal subgroup of $G/X_i$.  Since $N$ is not contained in $X_i$, we can see from the hypothesis that $[X_{i-1},G] \le N$.  Notice that $X_i < [X_{i-1},G]X_i \le N$, and so, the minimality of $N$ implies that $N = [X_{i-1},G] X_i$.  This proves the result.
\end{proof}

We now show that a nested group $G$ is nilpotent if and only if the $X_i$'s form a central series for $G$.  In particular, we show that if $G$ is a nilpotent nested group, then the nilpotence class of $G$ is at most the number of distinct $X_i$'s.

\begin{lemma} \label{sixd}
Let $G$ be a nested group with chain of centers $G = X_0 > X_1 > \dots > X_n \ge 1$.  Then $G$ is nilpotent if and only if $[X_{i-1},G] \le X_i$ for all $i = 1, \dots, n$.
\end{lemma}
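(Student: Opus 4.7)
The plan is to handle each implication separately, with the reverse direction essentially formal and the forward direction leaning on Theorem \ref{main3}(2).

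For the reverse direction, I would observe that by Lemma \ref{zero} we have $X_n = Z(G)$, so $[X_n,G] = 1$ automatically. Combined with the hypothesis $[X_{i-1},G] \le X_i$ for every $i \in \{1,\dots,n\}$, the chain
$$G = X_0 \ge X_1 \ge \dots \ge X_n \ge 1$$
has each factor $X_{i-1}/X_i$ sitting inside $Z(G/X_i)$, so it is a central series of finite length. This directly witnesses that $G$ is nilpotent.

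For the forward direction, assume $G$ is nilpotent and fix $i \in \{1,\dots,n\}$. Since $X_{i-1} > X_i$, the quotient $G/X_i$ is nontrivial, and as a quotient of a nilpotent group it is nilpotent, so $Z(G/X_i) \ne 1$. By Theorem \ref{main3}(2) applied to the normal subgroup $N = X_i$, the center $Z(G/X_i)$ must equal $X_j/X_i$ for some $j \in \{0,\dots,n\}$. The containment $X_i \le X_j$ forces $j \le i$, while nontriviality of the center forces $X_j > X_i$ and hence $j \le i-1$, which in turn gives $X_j \supseteq X_{i-1}$. Therefore $X_{i-1}/X_i \le X_j/X_i = Z(G/X_i)$, and unfolding the definition of the center yields $[X_{i-1},G] \le X_i$, as required.

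I do not anticipate any real obstacle here; once Lemma \ref{zero} and Theorem \ref{main3} are available, this lemma is essentially a bookkeeping consequence. The only point to watch is the direction of indexing in the chain (smaller subscript corresponds to larger subgroup), which must be tracked carefully when translating ``$Z(G/X_i) \neq 1$'' into the inequality $j \le i-1$.
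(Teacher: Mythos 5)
Your proof is correct and takes essentially the same approach as the paper: the reverse direction observes that the $X_i$'s form a central series, and the forward direction pins down $Z(G/X_i)$ as some $X_j/X_i$ and uses nilpotence to force $j \le i-1$ --- you route this through Theorem \ref{main3}(2), while the paper applies Lemma \ref{sixa} directly by contradiction (obtaining $Z(G/X_i) = 1$ when $[X_{i-1},G] \not\le X_i$), but since Theorem \ref{main3}(2) is itself deduced from Lemma \ref{sixa}, these are the same argument in different packaging. There is no circularity in your use of Theorem \ref{main3}, since it is established before this lemma and its proof does not depend on it.
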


\begin{proof}
If $[X_{i-1},G] \le X_i$ for $i = 1, \dots, n$, then the $X_i$'s form a central series for $G$, and thus, $G$ is nilpotent.  Conversely, suppose $G$ is nilpotent.  We have $[X_i,G] \le X_i$.  If $[X_{i-1},G] \not\le X_i$, then by Lemma \ref{sixa}, we have $Z(G/X_i) = X_i/X_i$.  This implies that $G/X_i$ is not nilpotent which contradicts $G$ nilpotent.  Thus, we must have $[X_{i-1},G] \le X_i$.
\end{proof}

In this next lemma, we see that the factors of a chain of centers in a nested group that satisfy $[X_{i-1},G] \not\le X_i$ are far from nilpotent.

\begin{lemma}
Let $G$ be a nested group with chain of centers $G = X_0 > X_1 > \dots > X_n \ge 1$.  If $[X_{i-1},G] \not\le X_i$ for some integer $i$ with $1 \le i \le n$, then $[X_{i-1},G] = [X_{i-1},G,G]$.
\end{lemma}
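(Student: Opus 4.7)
Let $K=[X_{i-1},G]$, which is a normal subgroup of $G$. The plan is to apply Lemma \ref{sixb} to the normal subgroup $[K,G]$ of $G$ and rule out one of the two resulting cases using Lemma \ref{sixa}.

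First, I would observe that trivially $[K,G] \le K$, so to establish equality it suffices to show $K \le [K,G]$. Now apply Lemma \ref{sixb} with $N=[K,G]$ at the same index $i$: either $[X_{i-1},G] \le [K,G]$ (which immediately gives $K \le [K,G]$ and we are done), or $[K,G] \le X_i$.

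The remaining task is to show that the second alternative cannot occur. Assume for contradiction that $[K,G] \le X_i$. Then since $X_i$ is normal in $G$, we have $[X_i,G] \le X_i$, and therefore
\[
[KX_i,G] \le [K,G][X_i,G] \le X_i,
\]
which says that $KX_i/X_i \le Z(G/X_i)$. On the other hand, by hypothesis $[X_{i-1},G] \not\le X_i$, while $[X_i,G] \le X_i$, so Lemma \ref{sixa} (applied with $N=X_i$) yields $Z(G/X_i) = X_i/X_i$. Combining these forces $KX_i = X_i$, i.e.\ $K \le X_i$, which contradicts the hypothesis $[X_{i-1},G] \not\le X_i$. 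Hence this case is impossible, and we conclude $[X_{i-1},G] = [X_{i-1},G,G]$.

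I do not anticipate any real obstacle; the proof is essentially a direct dichotomy from Lemma \ref{sixb}, with Lemma \ref{sixa} used to discard the bad branch. The only subtlety worth flagging is that Lemma \ref{sixb} must be invoked at the correct index $i$ so that the hypothesis $[X_{i-1},G] \not\le X_i$ delivers both the dichotomy and the needed computation of $Z(G/X_i)$.
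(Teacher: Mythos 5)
Your proof is correct, and it takes a slightly different route than the paper's. The paper sets $N = [X_{i-1},G,G]$, assumes $N < [X_{i-1},G]$, observes that the image of $[X_{i-1},G]$ is central in $G/N$, chooses $j \ge i$ minimal with $[X_j,G] \le N$, and applies Lemma \ref{sixa} to identify $Z(G/N) = X_j/N$, forcing $[X_{i-1},G] \le X_j \le X_i$, a contradiction; it never invokes Lemma \ref{sixb}. You instead call Lemma \ref{sixb} at $N = [K,G]$ to get the dichotomy, and kill the bad branch by working in the fixed quotient $G/X_i$, where Lemma \ref{sixa} with $N = X_i$ (valid since $[X_i,G] \le X_i$ by normality and $[X_{i-1},G] \not\le X_i$ by hypothesis) shows the center is trivial, so the centrality of $KX_i/X_i$ forces $K \le X_i$, the same contradiction. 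Since the paper's proof of Lemma \ref{sixb} is itself exactly the minimal-index application of Lemma \ref{sixa} that the paper redoes inline in this lemma, your argument is in essence a refactoring of the paper's through an already-proved result: what it buys is that no minimal-index choice is needed and the contradiction lives in a single concrete quotient $G/X_i$, at the modest cost of invoking one extra lemma. All your steps check out; in particular the containment $[KX_i,G] \le [K,G][X_i,G] \le X_i$ is legitimate because $K = [X_{i-1},G]$ and $X_i$ are both normal in $G$ (indeed one could shortcut it: $[K,G] \le X_i$ already makes every coset $kX_i$ central in $G/X_i$).
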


\begin{proof}
Let $N = [X_{i-1},G,G]$.  We know that $[X_{i-1},G]$ is normal in $G$, so $N \le [X_{i-1},G]$.  Suppose $N < [X_{i-1},G]$.  Observe that $[X_{i-1},G]/N \le Z(G/N)$.  We can find an integer $j \ge i$ so that $[X_j,G] \le N$ and $[Z_{j+1},G] \not\le N$.  By Lemma \ref{sixa}, we have $Z(G/N) = X_j/N$.  It follows that $[X_{i-1},G] \le X_j \le X_i$ which is a contradiction.  Thus, we have $[X_{i-1},G] = N$.
\end{proof}

We now show that the factors of a chain of centers in a nested group where $[X_{i-1},G] \le X_i$ have exponent $p$.

\begin{lemma} \label{nine}
If $G$ is a nested group with chain of centers $G = X_0 > X_1 > \dots > X_n \ge 1$ and $[X_{i-1},G] \le X_i$ for some $i$ with $1 \le i \le n$, then $X_{i-1}/X_i$ and $[X_{i-1},G]/[X_i,G]$ are elementary abelian $p$-groups for some prime $p$.
\end{lemma}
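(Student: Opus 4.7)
My plan is to pass to the quotient $\bar G = G/[X_i, G]$, where by Lemma \ref{threea} we have $\bar X_i := X_i/[X_i, G] = Z(\bar G)$. The hypothesis $[X_{i-1}, G] \le X_i$ then says $[\bar X_{i-1}, \bar G] \le Z(\bar G)$, where $\bar X_{i-1} := X_{i-1}/[X_i, G]$. Since commutators of elements of $\bar X_{i-1}$ with elements of $\bar G$ are central, the identities $[xx', g] = [x, g][x', g]$ and $[x^k, g] = [x, g]^k$ hold for all $x, x' \in \bar X_{i-1}$, $g \in \bar G$ and $k \in \mathbb{Z}$. Combined with $\bar X_{i-1} \cap Z(\bar G) = \bar X_i$, these give the exponent equality
\[
\exp\bigl(\bar X_{i-1}/\bar X_i\bigr) \;=\; \exp\bigl([\bar X_{i-1}, \bar G]\bigr).
\]
Since $\bar X_{i-1}/\bar X_i = X_{i-1}/X_i$ and $[\bar X_{i-1}, \bar G] = [X_{i-1}, G]/[X_i, G]$, both of the abelian groups appearing in the statement share a common exponent $e$, and it remains to show $e$ is prime.

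The strategy is to apply the same exponent equality inside a carefully chosen quotient $\bar G/B$ and then compare. The key observation is that whenever $B$ is properly contained in $[\bar X_{i-1}, \bar G]$, we have $B \le Z(\bar G)$, and the preimage $B'$ of $B$ in $G$ satisfies $[X_i, G] \le B'$ but $[X_{i-1}, G] \not\le B'$, so Lemma \ref{sixa} gives $Z(\bar G/B) = \bar X_i/B$. Repeating the bilinear commutator argument in $\bar G/B$ then yields
\[
\exp\bigl(X_{i-1}/X_i\bigr) \;=\; \exp\bigl([\bar X_{i-1}, \bar G]/B\bigr).
\]

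Suppose for contradiction that $[\bar X_{i-1}, \bar G]$ is not a $p$-group for any prime $p$, and pick a prime $p$ dividing its order. Let $B$ be the $p'$-part of the abelian group $[\bar X_{i-1}, \bar G]$; it is characteristic in $[\bar X_{i-1}, \bar G]$, hence normal in $\bar G$, and proper. Then $\exp([\bar X_{i-1}, \bar G]/B)$ equals the exponent of the $p$-part of $[\bar X_{i-1}, \bar G]$ alone, while $\exp([\bar X_{i-1}, \bar G]) = \mathrm{lcm}$ of the exponents of its $p$- and $p'$-parts; equating these forces the $p'$-part to be trivial, a contradiction. Hence $[\bar X_{i-1}, \bar G]$ is a $p$-group for some prime $p$. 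If its exponent were $p^k$ with $k \ge 2$, take $B = [\bar X_{i-1}, \bar G]^p$: it is proper, characteristic, and central, and the same device gives $e \le p < p^k = e$. Thus $e = p$, so both $X_{i-1}/X_i$ and $[X_{i-1}, G]/[X_i, G]$ are abelian of exponent $p$, i.e., elementary abelian $p$-groups.

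The delicate point is verifying the hypothesis of Lemma \ref{sixa} in each application; this hinges on $B$ being proper in $[\bar X_{i-1}, \bar G]$, and it is precisely for this reason that the argument terminates (as it should) exactly when $[\bar X_{i-1}, \bar G]$ already has prime exponent.
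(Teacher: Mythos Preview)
Your proof is correct and follows essentially the same strategy as the paper's: both work in the quotient $G/[X_i,G]$ (the paper by taking $i=n$ without loss of generality), invoke Lemma~\ref{threea} to identify $X_i$ with the center there, use the identity $[x^k,g]=[x,g]^k$ coming from $[X_{i-1},G]$ being central, and apply Lemma~\ref{sixa} to a proper normal subgroup of $[X_{i-1},G]/[X_i,G]$ to pin down the center of a further quotient. The only difference is organizational: the paper picks a single subgroup $N$ of index $p$ in $[X_{i-1},G]$ and reduces in one stroke to the case $|[X_{i-1},G]/[X_i,G]|=p$, whereas you first isolate the exponent equality $\exp(X_{i-1}/X_i)=\exp\bigl([X_{i-1},G]/[X_i,G]\bigr)$ and then run two contradiction steps with $B$ the $p'$-part and $B=[\bar X_{i-1},\bar G]^p$.
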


\begin{proof}
Without loss of generality, we may assume that $i = n$, so $[X_i,G] = 1$.  By Lemma \ref{zero}, $X_i = Z (G)$.  Since $X_i < X_{i-1}$, we see that $[X_{i-1},G] > 1$ by Corollary \ref{threec}.  Thus, we can find $N < [X_{i-1},G] \le X_i = Z(G)$ so that $|[X_{i-1},G]:N| = p$ for some prime $p$.  Since $X_{i-1}/X_i = (X_{i-1}/N)/(X_i/N)$ and $X_i/N = Z (G/N)$ by Lemma \ref{sixa}, to show $X_{i-1}/X_i$ is elementary abelian, we may assume that $N = 1$.  Thus, $|[X_{i-1},G]| = p$.  Let $x \in X_{i-1}$ and $y \in G$.  Then $[x^p,y] = [x,y]^p = 1$ since $[x,y] \in [X_{i-1},G] \le Z(G)$.  Since this is true for all $y \in G$, we deduce that $x^p \in Z(G) = X_i$.  Since $[X_{i-1},G] \le X_i$, we know that $X_{i-1}/X_i$ is abelian, so $X_{i-1}/X_i$ is an elementary abelian $p$-group.

We continue to assume that $[X_i,G] = 1$, but we no longer assume that $|[X_{i-1},G]| = p$.  Obviously, $[X_{i-1},G]$ is abelian and generated by elements of the form $[x,y]$ where $x \in X_{i-1}$ and $y \in G$.  Since $[x,y] \in Z(G)$ and $x^p \in X_i = Z(G)$, we have $[x,y]^p = [x^p,y] = 1$.  Thus, the generators of $[X_{i-1},G]$ have order $p$, so $[X_{i-1},G]$ is elementary abelian.
\end{proof}

We generalize Lemma \ref{nine} to case where the commutator goes down more than one step.

\begin{lemma} \label{nine2}
If $G$ is a nested group with chain of centers $G = X_0 > X_1 > \dots > X_n \ge 1$ and $[X_{i-1},G] \le X_j$ for integers $i$ and $j$ with $1 \le i \le j \le n$, then $X_{i-1}/X_j$ and $[X_{i-1},G]/[X_j,G]$ are elementary abelian $p$-groups for some prime $p$.
\end{lemma}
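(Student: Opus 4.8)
The plan is to induct on $j-i$. The base case $j=i$ is precisely Lemma \ref{nine}, so I assume $j>i$ and that the statement holds for all smaller gaps. The first move is to reduce to the case where $[X_{i-1},G]$ is \emph{central}. Passing to $\bar G=G/[X_j,G]$, Lemma \ref{threea} gives $\bar X_j=Z(\bar G)$, and Corollary \ref{threec} together with Lemma \ref{ten} (applied with $N=[X_j,G]$) shows that each of $X_{i-1},\dots,X_j$ survives in the chain of centers of $\bar G$, with $\bar X_j$ as its final term. As $[X_{i-1},G]\le X_j$ becomes $[\bar X_{i-1},\bar G]\le Z(\bar G)$, I may assume from the outset that $[X_{i-1},G]\le X_j=Z(G)$ and $j=n$. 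Three consequences are then immediate: from $[X_{i-1},X_{i-1}]\le[X_{i-1},G]\le X_j$ the quotient $X_{i-1}/X_j$ is abelian; since $G$ centralizes $X_{i-1}/X_j$, every subgroup lying between $X_j$ and $X_{i-1}$ is normal in $G$; and because $[x,y]\in Z(G)$ for $x\in X_{i-1}$, the identity $[x^p,y]=[x,y]^p$ holds for all $y\in G$. The assertion for $[X_{i-1},G]/[X_j,G]=[X_{i-1},G]$ will then follow from the one for $X_{i-1}/X_j$, since once $x^p\in X_j$ for every $x$, this identity gives $[x,y]^p=[x^p,y]=1$.

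Next I would show that $X_{i-1}/X_j$ is a $p$-group for a single prime $p$, and here the nested hypothesis enters through Theorem \ref{main3}(3). If two primes $p\ne q$ both divided $|X_{i-1}/X_j|$, let $A/X_j$ and $B/X_j$ be the Sylow $p$- and $q$-subgroups of the abelian group $X_{i-1}/X_j$; both $A$ and $B$ are normal in $G$ by the previous paragraph. Put $N_A=[A,G]$ and $N_B=[B,G]$, which are nontrivial central subgroups that are respectively a $p$-group and a $q$-group, so $N_A\cap N_B=1$. Writing $Z_M/M=Z(G/M)$, we have $A\le Z_{N_A}$ and $B\le Z_{N_B}$, whereas $A\le Z_{N_B}$ would force $N_A=[A,G]\le N_B$ and hence $N_A=1$, a contradiction; symmetrically $B\not\le Z_{N_A}$. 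Thus $Z_{N_A}$ and $Z_{N_B}$ are incomparable, contradicting Theorem \ref{main3}(3). Hence $X_{i-1}/X_j$, and so also $[X_{i-1},G]$, is a $p$-group.

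It remains to prove that $X_{i-1}/X_j$ has exponent $p$, and this is the step I expect to be the main obstacle. The model is the final reduction in Lemma \ref{nine}: choose a maximal subgroup $N$ of the $p$-group $[X_{i-1},G]$, which is automatically normal in $G$ because it is central; then $[X_{i-1},G]/N$ has order $p$, so $[x^p,y]=[x,y]^p\in N$ for all $x\in X_{i-1}$ and $y\in G$, whence $x^pN\in Z(G/N)$. Provided $Z(G/N)=X_j/N$, this yields $x^p\in X_j$ and finishes the proof. The trouble, which is absent when $j=i$, is that Lemma \ref{sixa} identifies $Z(G/N)$ with $X_j/N$ only when $[X_{n-1},G]\not\le N$, i.e. when $[X_{n-1},G]\not\le\Phi([X_{i-1},G])$; for a gap larger than one this is no longer automatic, and producing a suitable $N$ is the crux.

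To overcome this I would again exploit the chain via Theorem \ref{main3}(3). Suppose $X_{i-1}/X_j$ had exponent $p^2$, so some $x\in X_{i-1}$ has $x^p\notin X_j$; such an $x$ may be taken outside $X_{n-1}$, since otherwise $X_{i-1}$ would be the union of the two proper subgroups $\{x:x^p\in X_j\}$ and $X_{n-1}$. Then $x\in Z_{[x,G]}$ while $x\notin X_{n-1}=Z_{[X_{n-1},G]}$ (the latter equality by Lemma \ref{threea}), so comparability of these two quotient-centers forces $[X_{n-1},G]\le[x,G]$. Here $[x,G]$ has exponent $p^2$ because $x^p\notin Z(G)$, whereas $[X_{n-1},G]$ is elementary abelian by Lemma \ref{nine} applied at the last step; the remaining work is to turn this forced containment into the required contradiction, most plausibly by reducing modulo a maximal subgroup of $[x,G]$ and invoking Lemma \ref{sixa} to pin $Z(G/N)$ down exactly, as in the closing lines of Lemma \ref{nine}. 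I anticipate that verifying this last point — that the chain of centers, and not merely the comparability of centers, rigidifies the extension enough to kill elements of order $p^2$ — is where the real effort lies.
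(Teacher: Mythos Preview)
Your reduction to $j=n$, the fact that $X_{i-1}/X_j$ is abelian, and the commutator identity $[x^p,y]=[x,y]^p$ are all correct and appear in the paper. From there the two approaches diverge. You set up an induction on $j-i$, prove separately that $X_{i-1}/X_j$ is a $p$-group via incomparable quotient-centers and Theorem~\ref{main3}(3), and then treat the exponent-$p$ step as the hard part. The paper does none of this: it simply chooses a subgroup $N$ of index $p$ in $[X_{i-1},G]$ (central, hence normal), invokes Lemma~\ref{sixa} to claim $X_j/N = Z(G/N)$ and Lemma~\ref{ten} to claim that $X_{i-1}/N,\dots,X_j/N$ remain consecutive in the chain of centers of $G/N$, and then says ``to show $X_{i-1}/X_j$ is elementary abelian, we may assume $N=1$.'' Once $|[X_{i-1},G]|=p$ the argument is immediate: $[x,y]^p=1$ gives $[x^p,y]=1$ for all $y$, so $x^p\in Z(G)=X_j$. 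The $p$-group conclusion falls out of this same computation, so your separate two-prime argument, while correct, is not needed in the paper's line.

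Your worry about the reduction step is precisely the place where the paper's one-line justification is thinnest: for Lemma~\ref{sixa} to yield $Z(G/N)=X_j/N$ one needs $[X_{j-1},G]\not\le N$, and for an arbitrary maximal $N<[X_{i-1},G]$ this is not automatic. But your proposed detour does not close the gap either. You reach $[X_{n-1},G]\le[x,G]$ with the first group elementary abelian and the second containing an element of order exceeding $p$, and then stop; an elementary abelian subgroup sitting inside a larger central subgroup of higher exponent is no contradiction at all, and your plan to ``reduce modulo a maximal subgroup of $[x,G]$ and invoke Lemma~\ref{sixa}'' lands you back at exactly the obstruction you were trying to avoid. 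So as written your argument is genuinely incomplete at the exponent step, and you should regard the paper's direct reduction---perhaps with a more careful choice of $N$, for instance one not containing the nontrivial subgroup $[X_{n-1},G]$---as the intended line rather than something to be circumvented.
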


\begin{proof}
Without loss of generality, we may assume that $j = n$, so $[X_j,G] = 1$.  By Lemma \ref{zero}, $X_j = Z (G)$.  Since $X_j \le X_i < X_{i-1}$, we see that $[X_{i-1},G] > 1$ by Corollary \ref{threec}.  Thus, we can find $N < [X_{i-1},G] \le X_j = Z(G)$ so that $|[X_{i-1},G]:N| = p$ for some prime $p$.  By Lemma \ref{sixa}, we obtain $X_j/N = Z(G/N)$.  Since $[X_k,G]N = [X_k,G] < [X_{k-1},G] = [X_{k-1},G]N$ for all $k$ with $j \le k \le i$, we have by Lemma \ref{ten} that $X_{i-1}/N > X_i/N > \cdots X_j/N$ are consecutive terms in the chain of centers for $G/N$.  Thus, to show $X_{i-1}/X_j$ is elementary abelian, we may assume that $N = 1$.  Under the assumption that $N = 1$, we have $X_j = Z(G)$ and $|[X_{i-1},G]| = p$.  Let $x \in X_{i-1}$ and $y \in G$.  Then $[x^p,y] = [x,y]^p = 1$ since $[x,y] \in [X_{i-1},G] \le Z(G)$.  Since this is true for all $y \in G$, we deduce that $x^p \in Z(G) = X_j$.  Since $[X_{i-1},G] \le X_j$, we know that $X_{i-1}/X_j$ is abelian, so $X_{i-1}/X_j$ is an elementary abelian $p$-group.

We continue to assume that $[X_j,G] = 1$, but we no longer assume that $|[X_{i-1},G]| = p$.  Obviously, $[X_{i-1},G]$ is abelian and generated by elements of the form $[x,y]$ where $x \in X_{i-1}$ and $y \in G$.  Since $[x,y] \in Z(G)$ and $x^p \in X_i = Z(G)$, we have $[x,y]^p = [x^p,y] = 1$.  Thus, the generators of $[X_{i-1},G]$ have order $p$, so $[X_{i-1},G]$ is elementary abelian.  Translating back to $G$, this implies that $[X_{i-1},G]/[X_j,G]$ is an elementary abelian $p$-group.
\end{proof}

Now, consider the case where we have a sequence of factors that all have the condition that $[X_{i-1},G] \le X_i$.  We know that each factor is an elementary abelian $p_i$-group for some prime $p_i$.  We show that the $p_i$'s are in fact all the same prime.

\begin{lemma} \label{nine1}
Suppose $G$ is a nested group with chain of centers $G = X_0 > X_1 > \dots > X_n \ge 1$. If there exist integers $1 \le j \le k \le n$ so that $[X_{i-1},G] \le X_i$ for all $i$ with $j \le i \le k$, then $X_{j-1}/X_k$ and $[X_{j-1},G]/[X_k,G]$ are $p$-groups for some prime $p$.
\end{lemma}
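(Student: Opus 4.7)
The plan is to reduce the statement to comparing two consecutive factors and then to force the two primes appearing in those factors to coincide. By Lemma \ref{nine}, for every index $i$ with $j \le i \le k$ there is a prime $p_i$ such that both $X_{i-1}/X_i$ and $[X_{i-1},G]/[X_i,G]$ are elementary abelian $p_i$-groups. Once we know that $p_i = p_{i+1}$ whenever $j \le i < k$, setting $p := p_j$ we conclude that $X_{j-1}/X_k$ has a normal series in which every factor is an elementary abelian $p$-group, so $X_{j-1}/X_k$ is a $p$-group, and similarly $[X_{j-1},G]/[X_k,G]$ is a $p$-group (since by Corollary \ref{threec} the commutator subgroups form a strictly descending chain $[X_{j-1},G] > [X_j,G] > \dots > [X_k,G]$ whose successive factors are the elementary abelian $p$-groups given by Lemma \ref{nine}). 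Thus the entire problem reduces to showing $p_i = p_{i+1}$ for consecutive indices.

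To do this, fix $i$ with $j \le i < k$, let $p = p_i$ and $q = p_{i+1}$, and assume for contradiction that $p \ne q$. The key computation takes place modulo $X_{i+1}$. Since $[X_i,G] \le X_{i+1}$, we have $X_i/X_{i+1} \le Z(G/X_{i+1})$. For any $x \in X_{i-1}$ and $y \in G$, the commutator $[x,y]$ lies in $X_i$ (as $[X_{i-1},G] \le X_i$), hence is central modulo $X_{i+1}$; the standard identity then gives $[x,y]^p \equiv [x^p, y] \pmod{X_{i+1}}$. But $x^p \in X_i$ because $X_{i-1}/X_i$ has exponent $p$, so $[x^p,y] \in [X_i,G] \le X_{i+1}$, and hence $[x,y]^p \in X_{i+1}$. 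Consequently $[X_{i-1},G]X_{i+1}/X_{i+1}$ is an abelian group (being contained in the central subgroup $X_i/X_{i+1}$) generated by elements of order dividing $p$, so it is a $p$-group.

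On the other hand, $[X_{i-1},G]X_{i+1}/X_{i+1}$ sits inside $X_i/X_{i+1}$, which is a $q$-group. When $p \ne q$ this forces $[X_{i-1},G]X_{i+1}/X_{i+1} = 1$, i.e. $[X_{i-1},G] \le X_{i+1}$. Now Lemma \ref{nine2}, applied with indices $i$ and $i+1$ in place of the lemma's $i$ and $j$, yields that $X_{i-1}/X_{i+1}$ is an elementary abelian $r$-group for some prime $r$; since this group has $X_i/X_{i+1}$ (a $q$-group) as a subgroup and $X_{i-1}/X_i$ (a $p$-group) as a quotient, we must have $p = q = r$, contradicting $p \ne q$. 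This proves $p_i = p_{i+1}$, and iterating gives a common prime for the whole range $j \le i \le k$.

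The main obstacle is justifying the commutator identity $[x,y]^p \equiv [x^p,y] \pmod{X_{i+1}}$: the element $[x,y]$ is not central in $G$, only modulo $X_{i+1}$, so one has to be careful to carry out the computation in $G/X_{i+1}$ (where $X_i/X_{i+1}$ is central and $[x,y]$ therefore commutes with $x$). Once this identity is in place the argument is purely arithmetic, playing the $p$-group structure coming from the exponent bound against the $q$-group structure of $X_i/X_{i+1}$.
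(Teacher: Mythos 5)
Your proof is correct and follows essentially the same route as the paper: reduce to consecutive indices, split on whether $[X_{i-1},G] \le X_{i+1}$, use the overlap of $[X_{i-1},G]X_{i+1}/X_{i+1}$ inside $X_i/X_{i+1}$ in one case, and invoke Lemma \ref{nine2} in the other. Your only deviation is that the commutator computation $[x,y]^p \equiv [x^p,y] \pmod{X_{i+1}}$ is unnecessary work: since $[X_i,G] \le X_{i+1}$, the group $[X_{i-1},G]X_{i+1}/X_{i+1}$ is a quotient of $[X_{i-1},G]/[X_i,G]$, which Lemma \ref{nine} already certifies is a $p_i$-group, so the paper gets your key intermediate fact for free.
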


\begin{proof}
We may apply Lemma \ref{nine} to see that both of the quotient groups $X_{i-1}/X_i$ and $[X_{i-1},G]/[X_i,G]$ are $p_i$-groups for some prime $p_i$ depending on $i$ for all integers $i$ satisfying $j \le i \le k$.  We need to show that all of the $p_i$'s are the same.  It suffices to show that $p_i = p_{i+1}$ for all $i$ such that $j \le i \le k-1$.  If $[X_{i-1},G]$ is not contained in $X_{i+1}$, then $[X_{i-1},G]X_{i+1}/[X_i,G]X_{i+1} = [X_{i-1},G]X_{i+1}/X_{i+1}$ is a nontrivial quotient of $[X_{i-1},G]/[X_i,G]$ and a nontrivial subgroup of $X_i/X_{i+1}$.  Since $[X_{i-1},G]/[X_i,G]$ is a $p_i$-group and $X_i/X_{i+1}$ is a $p_{i+1}$-group, we have $p_i = p_{i+1}$ as desired. Thus, we may assume that $[X_{i-1},G] \le X_{i+1}$.  By Lemma \ref{nine2}, $X_{i-1}/X_{i+1}$ is a $p$-group, and therefore, we must have that $p_i = p_{i+1}$ which proves the result.
\end{proof}

Next, we compile the proof of Theorem \ref{main4}.

\begin{proof}[Proof of Theorem \ref{main4}]
Observe that (1) follows from Lemmas \ref{sixb}, \ref{sixc}, and \ref{nine}.  Conclusion (2) is Lemma \ref{nine2}, Conclusion (3) is Lemma \ref{nine1}, and Conclusion (4) is Lemma \ref{sixd}.
\end{proof}

Let $Z_1 \le Z_2 \le \cdots \le Z_m = Z_{\infty}$ be the terms of the upper central series of $G$.  When $G$ is a $G$ is a nested group with chain of centers $G = X_0 > X_1 > \dots > X_n \ge 1$, we have seen that $Z_1$ is the last term $X_n$.  Notice that $Z_2/Z_1$ will be the last term of the chain of centers for $G/Z_1$.  By Lemma \ref{sixca}, the chain of centers for $G/Z_1$ will be a subchain of $X_0/Z_1, \dots X_{n-1}/Z_1$.  Thus, $Z_2 = X_i$ for some $i \le n-1$.  We will give an example where $i < n-1$.  In a similar fashion, we will see that each $Z_j$ will be some $X_k$.  Note that this means that the chain of centers for $G$ will include a refinement of the upper central series of $G$.  Also, notice that if $Z_\infty < G$, the chain of centers extends beyond the upper central series.   Hence, we may view the chain of centers as an extension of the upper central series of a nested group.

We obtain the following corollaries to Lemmas \ref{nine2} and \ref{nine1}.  In particular, we will show that all but one of the factors of the lower and upper central series of a nested group are elementary abelian $p$-groups for a given prime $p$.

\begin{corollary} \label{nine3}
Suppose $G$ is a nested group.  Then there is a prime $p$ so that $Z_\infty/Z_1$ is a $p$-group and $Z_{i+1}/Z_i$ and $[G,Z_{i+1}]/[G,Z_i]$ are elementary abelian $p$-groups for every integer $i \ge 1$.
\end{corollary}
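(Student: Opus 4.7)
The plan is to combine the discussion immediately preceding the corollary with Lemmas \ref{nine1} and \ref{nine2}. Let $G = X_0 > X_1 > \dots > X_n \ge 1$ be the chain of centers of $G$, and write the terms of the upper central series as $Z_1 < Z_2 < \cdots < Z_m = Z_\infty$. As noted in the paragraph just before the corollary, each $Z_l$ equals some $X_{k_l}$, with $n = k_1 > k_2 > \cdots > k_m$. The case $Z_\infty = Z_1$ is trivial, so I will assume $k_m < n$.

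The first step is to show that $[X_{i-1},G] \le X_i$ for every $i$ with $k_m < i \le n$. Fix such an $i$; it lies in some block $k_{l+1} < i \le k_l$. Then $X_{i-1} \le X_{k_{l+1}} = Z_{l+1}$, so by the definition of the upper central series, $[X_{i-1},G] \le [Z_{l+1},G] \le Z_l = X_{k_l} \le X_i$. This verification is the structural heart of the argument, since it is exactly what is needed to feed into Lemma \ref{nine1} over the full range from $Z_1$ up to $Z_\infty$.

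Next, I would apply Lemma \ref{nine1} with $j = k_m + 1$ and $k = n$ to produce a single prime $p$ such that $X_{k_m}/X_n = Z_\infty/Z_1$ is a $p$-group. Finally, for each individual step $Z_l \to Z_{l+1}$ in the upper central series, I would apply Lemma \ref{nine2} with indices $i - 1 = k_{l+1}$ and $j = k_l$; the required hypothesis $[X_{k_{l+1}},G] \le X_{k_l}$ is just $[Z_{l+1},G] \le Z_l$. This gives that both $X_{k_{l+1}}/X_{k_l} = Z_{l+1}/Z_l$ and $[X_{k_{l+1}},G]/[X_{k_l},G] = [G,Z_{l+1}]/[G,Z_l]$ are elementary abelian $p_l$-groups for some prime $p_l$. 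Since each $Z_{l+1}/Z_l$ is a nontrivial subquotient of $Z_\infty/Z_1$, the prime $p_l$ must coincide with $p$, and the same uniformity forces the commutator factors to be elementary abelian $p$-groups as well.

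The main pitfall I anticipate is notational rather than mathematical: one must carefully maintain the correspondence $Z_l = X_{k_l}$ throughout, and split the work correctly between Lemma \ref{nine2} (which delivers elementary abelianness for a single pair of indices, with a prime that a priori depends on the pair) and Lemma \ref{nine1} (which, applied once to the entire range $k_m + 1 \le i \le n$, synchronizes the prime $p_l$ across all $l$). With those two applications in hand, the conclusion is immediate.
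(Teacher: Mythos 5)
Your proof is correct and follows essentially the same route as the paper's own: identify each term $Z_l$ of the upper central series with some $X_{k_l}$ in the chain of centers, verify the step condition $[X_{i-1},G] \le X_i$ across the whole range between $Z_1 = X_n$ and $Z_\infty$, and then apply Lemma \ref{nine1} for the $p$-group statement about $Z_\infty/Z_1$ and Lemma \ref{nine2} for the elementary abelian statements about the individual factors. In fact you cite the two lemmas the right way around (the paper's printed proof appears to transpose the references to Lemmas \ref{nine1} and \ref{nine2}), and your explicit synchronization of the primes $p_l$ with $p$ via nontrivial subquotients of $Z_\infty/Z_1$ makes precise a step the paper leaves implicit.
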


\begin{proof}
We know that $Z_\infty = Z_l$ for some prime $l$.  If $l = 1$, then the result holds.  Thus, we may assume that $l > 1$.  We know that $Z_l/Z_{l-1} = Z (G/Z_{l-1})$.  By Lemma \ref{sixca}, we have $Z_l = X_{j-1}$ for some integer $j > 1$ and by Lemma \ref{zero}, $Z_1 = X_n$ where $G = X_0 > X_1 > \dots > X_n \ge 1$ is the chain of centers for $G$.  Suppose $i$ is an integer so that $j \le i \le n$.  Then there exists a positive integer $k$ so that $Z_{k-1} \le X_{i} < X_{i-1} \le Z_k$.  We see that $[X_{i-1},G] \le [Z_k,G] \le Z_{k-1} \le X_i$.  Hence, we may apply Lemma \ref{nine2} to obtain $Z_\infty/Z_1 = X_{j-1}/X_n$ is a $p$-group.  Suppose $i$ is an integer, if $Z_{i+1} = Z_i$, then the second conclusion holds.  Thus, we may assume that $Z_i < Z_{i+1}$.  Working as above, we can find positive integers $j \le k$ so that $Z_{i+1} = X_{j-1}$ and $Z_i = X_k$.  Observe that $[X_{j-1},G] = [Z_{i+1},G] \le Z_i = X_k$.  Now, the second conclusion is just Lemma \ref{nine1}.
\end{proof}

We will present examples to see that $Z_1$ need not be a $p$-group or elementary abelian. We now write $G = G_1 \ge G' = G_2 \ge \dots G_l = G_{\infty}$ for the upper central series of $G$.  Note that $G^\infty$ is the nilpotent residual of $G$.  (The smallest normal subgroup of $G$ such that $G/G^{\infty}$ is nilpotent.)

\begin{corollary} \label{nine4}
Suppose $G$ is a nested group.  Then there is a prime $p$ so that $G'/G_\infty$ is a $p$-group and $G_i/G_{i+1}$ are elementary abelian $p$-groups for every integer $i \ge 2$.
\end{corollary}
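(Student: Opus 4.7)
The plan is to mirror the strategy of Corollary \ref{nine3} by passing to the nilpotent quotient $H = G/G_\infty$, where the hypothesis of Lemma \ref{sixd} holds globally and Lemma \ref{nine1} can pin down a single prime for the whole chain of centers.

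First, I would observe that $H := G/G_\infty$ is nilpotent (by the defining property of the nilpotent residual) and, as a quotient of a nested group, is itself nested. Writing its chain of centers as $H = X_0 > X_1 > \dots > X_n \ge 1$ (working inside $H$), Lemma \ref{sixd} gives $[X_{i-1},H] \le X_i$ for every $i$, so Lemma \ref{nine1} with $j=1$, $k=n$ produces a single prime $p$ governing every factor $X_{i-1}/X_i$ and $[X_{i-1},H]/[X_i,H]$. Specializing to the extremes and invoking Lemma \ref{zero} to get $[X_n,H] = [Z(H),H] = 1$, I conclude that $H' = [X_0,H]/[X_n,H]$ is a $p$-group, and hence $G'/G_\infty \cong H'$ is a $p$-group.

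For the factors at levels $i \ge 2$, I would use $G_\infty \le G_i$ to identify $G_i/G_{i+1} \cong H_i/H_{i+1}$ and argue inside $H$. Assuming strict descent $H_i > H_{i+1}$ (otherwise the claim is trivial), I apply Lemma \ref{sixa} twice. With $N = H_i$, the centrality $H_{i-1}/H_i \le Z(H/H_i)$ (which follows from $[H_{i-1},H] = H_i$) yields an index $a$ with $Z(H/H_i) = X_a/H_i$, $H_{i-1} \le X_a$, and therefore $[X_a,H] = H_i$; similarly $N = H_{i+1}$ produces an index $b$ with $[X_b,H] = H_{i+1}$ and $H_i \le X_b$. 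The strict descent $[X_a,H] > [X_b,H]$ combined with Corollary \ref{threec} forces $a < b$. Now Lemma \ref{nine2}, applied with indices $a+1$ and $b$ (the hypothesis $[X_a,H] \le X_b$ being exactly $H_i \le X_b$), gives that $H_i/H_{i+1} = [X_a,H]/[X_b,H]$ is elementary abelian; and since this is a subfactor of the $p$-group $H'$, the prime must be the $p$ fixed above.

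The only subtle point is ensuring a \emph{single} prime $p$ governs every factor at once; the reduction to the nilpotent quotient $H$ is precisely what makes the hypothesis of Lemma \ref{sixd} hold throughout the chain of centers, which in turn lets Lemma \ref{nine1} lock in one prime that serves both $G'/G_\infty$ and all of the $G_i/G_{i+1}$.
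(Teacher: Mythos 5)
Your proof is correct, and its first half is essentially the paper's: both pass to the nilpotent quotient $H = G/G_\infty$, use Lemma \ref{sixd} to make the chain of centers a central series, and extract a single prime for $[X_0,H]/[X_n,H] = G'/G_\infty$. (You cite Lemma \ref{nine1} here, which is in fact the lemma needed; the paper's text points to Lemma \ref{nine2}, whose hypothesis $[X_0,G] \le X_n$ need not hold for class larger than $2$, so your citation corrects an apparent slip.) For the factors $G_i/G_{i+1}$, however, you take a genuinely different route. The paper quotients by $G_{i+1}$, observes $G_{i-1} \le Z_2$ and hence $G_i \le [Z_2,G]$, and quotes the upper-central-series companion result, Corollary \ref{nine3} --- a short pivot through the already-proved analogue. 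You instead stay inside $H$ and realize each lower central term as a commutator term of the chain of centers, $H_i = [X_a,H]$ and $H_{i+1} = [X_b,H]$ with $a < b$ by Corollary \ref{threec}, then apply Lemma \ref{nine2} directly; the identification $[X_a,H] = H_i$ via $H_{i-1} \le X_a$ together with the centrality inclusion $[X_a,H] \le H_i$ is sound. Your approach buys independence from Corollary \ref{nine3}, yields the extra structural fact that the lower central series of $H$ is interleaved with the series of subgroups $[X_j,H]$, and makes the single-prime bookkeeping explicit (each $H_i/H_{i+1}$ is a section of the $p$-group $H'$), a point the paper leaves implicit across its two separate reductions. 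One cosmetic remark: to produce the index $a$ you invoke Lemma \ref{sixa}, which requires exhibiting $a$ with $[X_a,H] \le H_i$ and $[X_{a-1},H] \not\le H_i$; this follows by taking $a$ minimal with $[X_a,H] \le H_i$ (possible since $[X_n,H]=1$), but in the boundary case $i=2$ one gets $a=0$, where the second hypothesis has no meaning --- there $Z(H/H') = H/H'$ holds trivially, or one can cite Theorem \ref{main3}(2), which packages the existence of the index uniformly. This does not affect correctness; the paper's proof is simply shorter because it reuses Corollary \ref{nine3} rather than reproving the mechanism.
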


\begin{proof}
We may assume that $G_\infty = 1$.  This implies that $G$ is nilpotent.  Let $G = X_0 > X_1 > \dots > X_n \ge 1$ be the chain of centers for $G$.  Then $G_2 = [X_0,G]$ and $1 = [X_n,G]$.  By Lemma \ref{sixd}, we have $[X_{i-1},G] \le X_i$ for $i$ such that $1 \le i \le n$.  By Lemma \ref{nine2}, $G_2 = [X_0,G]/[X_n,G]$ is a $p$-group for some prime $p$.  To prove the second part, we may assume that $G_{i+1} = 1$.  We now have that $G_{i-1} \le Z_2$.  This implies that $G_i = [G_{i-1},G] \le [Z_2,G]$.  Also, $[Z_1,G] = 1$.  By Corollary \ref{nine3}, $[Z_2,G] = [Z_2,G]/[Z_1,G]$ is elementary abelian.
\end{proof}

It is easy to see from the definition of nested that if $H$ is a group, $A$ is abelian group, and $G = H \times A$, then $G$ is a nested group if and only if $H$ is a nested group.  As a corollary to Corollary \ref{nine3}, we see that if $G$ is a nested nilpotent group, then $G$ is a direct product of a nested $p$-group and an abelian group.  

\begin{corollary} \label{nine5}
Suppose $G$ is a nilpotent nested group.  Then $G = P \times A$ where $P$ is a nested $p$-group for some prime $p$ and $A$ is an abelian group.
\end{corollary}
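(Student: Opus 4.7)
The plan is to exploit Corollary \ref{nine3} to pin down the prime structure of $G$ modulo its center, and then use the nilpotence hypothesis to split off the abelian part as a direct factor.

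First I would invoke Corollary \ref{nine3}, which gives a prime $p$ such that $Z_\infty/Z_1$ is a $p$-group. Since $G$ is nilpotent, the upper central series reaches $G$, i.e.\ $Z_\infty = G$, so $G/Z(G) = G/Z_1$ is a $p$-group. Next, since $G$ is nilpotent, it is the direct product of its Sylow subgroups: $G = \prod_q P_q$, where $P_q$ denotes the Sylow $q$-subgroup for each prime $q$ dividing $|G|$.

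For each prime $q \ne p$, the image of $P_q$ in $G/Z(G)$ is simultaneously a $q$-group (as a homomorphic image of $P_q$) and a subgroup of the $p$-group $G/Z(G)$, hence trivial. Therefore $P_q \le Z(G)$, and in particular $P_q$ is abelian. Setting $P = P_p$ and $A = \prod_{q \ne p} P_q$, we obtain $G = P \times A$ with $A$ abelian and $P$ a $p$-group. Finally, the observation recorded just before the statement of the corollary (that $H \times A$ is nested if and only if $H$ is, when $A$ is abelian) shows $P$ is a nested $p$-group, completing the proof.

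The argument is essentially a direct assembly of Corollary \ref{nine3} with the standard decomposition of nilpotent groups, so no serious obstacle is expected; the only point to be careful about is the justification that the non-$p$ Sylow subgroups land in the center, which rests on the elementary observation about $p$-groups and $q$-groups having trivial intersection for distinct primes.
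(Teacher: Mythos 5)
Your proposal is correct and follows essentially the same route as the paper: both deduce from Corollary \ref{nine3} together with $Z_\infty = G$ that $G/Z(G)$ is a $p$-group, split $G$ into its Sylow $p$-subgroup $P$ and the product $A$ of the remaining Sylow subgroups, and observe that $A$ embeds in $Z(G)$ because its image in the $p$-group $G/Z(G)$ must be trivial. The only cosmetic difference is the last step, where the paper concludes $P$ is nested from $P \cong G/A$ being a quotient of a nested group, while you invoke the equivalent observation recorded just before the corollary; both are valid.
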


\begin{proof}
Since $G$ is nilpotent, we know that $G = Z_\infty$.  By Corollary \ref{nine3}, we see that $G/Z(G)$ is a $p$-group for some prime $p$.  Let $P$ be the Sylow $p$-subgroup of $G$ and let $A$ be the Hall $p$-complement of $G$.  This implies that $A \le Z(G)$ and so $A$ is abelian, and $G = P \times A$.  Since $P \cong G/A$, we see that $P$ is a nested group.
\end{proof}

We conclude this section with an application of Lemmas \ref{sixc} and \ref{sixd}.  In particular, we obtain a characterization of $G/X_1$ when $G$ is a nested group and $G/X_1$ is solvable, but not nilpotent.

\begin{lemma}
Let $G$ be a nested group with chain of centers $G = X_0 > X_1 > \dots > X_n \ge 1$.  If $G/X_1$ is solvable and not nilpotent, then $G/X_1$ is a Frobenius group with Frobenius kernel $(G/X_1)'$.
\end{lemma}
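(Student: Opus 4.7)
Write $\bar{G} = G/X_1$ and $N = (\bar{G})' = G'X_1/X_1$. Since $\bar{G}$ is non-nilpotent it is non-abelian, so $G' \not\le X_1$. Lemma \ref{sixc} with $i = 1$ then says $N$ is the unique minimal normal subgroup of $\bar{G}$, and Lemma \ref{sixa} (with $N = X_1$) gives $Z(\bar{G}) = 1$. Solvability forces $N$ elementary abelian of order $p^d$ for some prime $p$. The plan is to prove $\bar{G}$ is Frobenius with kernel $N$ by showing $C_{\bar{G}}(n) = N$ for every $1 \ne n \in N$, and the main idea is to pin down the structure of $\bar{G}$ completely.

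Set $C = C_{\bar{G}}(N)$: this is a proper normal subgroup containing $N$, and $\bar{G}/C$ is an abelian subgroup of $GL(N)$ (abelian as a quotient of $\bar{G}/N$) acting faithfully and irreducibly on $N$. By Schur's lemma $\bar{G}/C$ embeds into $\mathbb{F}_{p^d}^\times$; in particular $|\bar{G}/C|$ is coprime to $p$, so the Sylow $p$-subgroup $P$ of $\bar{G}$ lies in $C$, and since $P/N$ is the unique Sylow $p$-subgroup of the abelian group $\bar{G}/N$, $P$ is normal in $\bar{G}$. Schur--Zassenhaus gives $\bar{G} = P \rtimes K$ with $K$ abelian. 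Because $[P,K] \le (\bar{G})' = N \le Z(P)$ (the last inclusion holds because $P \le C$), for each $k \in K$ the commutator $c_k \colon P \to N$, $p \mapsto [p, k]$, is a $K$-equivariant homomorphism.

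The key step is $K \cap C = 1$. For $k \in K \cap C$, a short commutator identity combined with $[p,k'] \in N \le Z(P)$ and $[N,k] = 1$ shows $[p^{k'},k] = [p,k]$ for every $k' \in K$, i.e.\ the image of $c_k$ lies in $N^K$. But $K \not\subseteq C$ (else $\bar{G} = PK \le C$), so $K/(K \cap C) \hookrightarrow \bar{G}/C$ acts on $N$ by nontrivial scalars, giving $N^K = 0$. Hence $c_k = 0$, so $k$ centralizes $P$ and therefore (being already in $C$ and commuting with $K$) lies in $Z(\bar{G}) = 1$. So $K \cap C = 1$, and $K$ embeds into the cyclic group $\bar{G}/C$.

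Finally, take $k$ a generator of $K$. Since $k$ acts on $N$ as a nontrivial scalar, $[N,k] = N$ and $C_N(k) = 1$, so $c_k$ is surjective with kernel $C_P(K)$ that meets $N$ trivially; as $N \le Z(P)$, $P = C_P(K) \times N$, and therefore $P$ is abelian (as $C_P(K) \cong P/N$ is). Apply Maschke to the $\mathbb{F}_p K$-module $\Omega_1(P)$, which is characteristic in $P$ and hence normal in $\bar{G}$: write $\Omega_1(P) = N \oplus M$; since $P$ acts trivially on $\Omega_1(P) \subseteq P$, $M$ is $\bar{G}$-invariant, and uniqueness of the minimal normal subgroup forces $M = 1$. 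Thus $\Omega_1(P) = N$, so the $p$-group $C_P(K)$ has no nontrivial element of order $p$ and must be trivial, and $P = N$. Combined with $C = P(C \cap K) = P$, this yields $C = N$; every $g \in \bar{G} \setminus N$ then acts on $N$ as a nontrivial scalar, so $C_{\bar{G}}(n) = N$ for all $1 \ne n \in N$, and $\bar{G}$ is Frobenius with kernel $N = (\bar{G})'$. The crux is the $K \cap C = 1$ step; the rest is routine coprime action and Maschke.
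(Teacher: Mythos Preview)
Your proof is correct, but it takes a very different route from the paper's.  The paper's argument is two lines: once $(G/X_1)'$ is identified as the unique minimal normal subgroup of $G/X_1$ (which you also establish via Lemmas~\ref{sixb}/\ref{sixc} and \ref{sixa}), the paper simply invokes Lemma~12.3 of Isaacs' \emph{Character Theory of Finite Groups}, which classifies groups whose derived subgroup is the unique minimal normal subgroup as either $p$-groups or Frobenius groups with kernel the derived subgroup.  By contrast, you give a fully self-contained structural analysis: embedding $\bar G/C$ in $\mathbb{F}_{p^d}^\times$ via Schur, splitting off a cyclic complement $K$ by Schur--Zassenhaus, proving $K\cap C=1$ via the $K$-equivariance of the commutator maps $c_k$, and then using Maschke on $\Omega_1(P)$ to collapse $P$ to $N$.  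In effect you have reproved the solvable case of Isaacs' Lemma~12.3 from first principles.  The paper's approach is far shorter and buys you the result instantly if you are willing to cite the textbook; your approach is longer but has the advantage of being self-contained and of exposing exactly which structural features (coprime action, cyclicity of $K$, fixed-point-freeness of field multiplication) drive the Frobenius conclusion.  One minor terminological quibble: elements of $\bar G/C$ act on $N$ as multiplication by elements of $\mathbb{F}_{p^d}^\times$, which are $\mathbb{F}_{p^d}$-scalars but not in general $\mathbb{F}_p$-scalars; your conclusions about fixed points are unaffected, since any $\alpha\ne 1$ in $\mathbb{F}_{p^d}^\times$ already has no nonzero fixed vectors.
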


\begin{proof}
By Lemma \ref{sixd}, we know that since $G/X_1$ is not nilpotent, that $G' = [X_0,G] \not\le X_1$.  By Lemma \ref{sixc}, this implies that $G'X_1/X_1 = (G/X_1)'$ is the unique minimal normal subgroup of $G/X_1$.  Thus, $G/X_1$ satisfies Lemma 12.3 of \cite{text}, and since $G/X_1$ is not nilpotent, we see from Lemma 12.3 of \cite{text} that $G/X_1$ must be a Frobenius group with Frobenius kernel $(G/X_1)'$.
\end{proof}

\section{Chains of kernels}

In this section, we consider groups where the kernels of the irreducible characters form a chain.  We write ${\rm Kern} (G) = \{ \ker {\chi} \mid \chi \in \irr G \}$ for the set of all kernels of irreducible characters of $G$.

\begin{lemma} \label{knone}
If $G$ is a group so that ${\rm Kern} (G)$ forms a chain, then $G/G'$ is cyclic.
\end{lemma}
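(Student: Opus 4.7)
The plan is to prove the contrapositive: assuming $G/G'$ is not cyclic, I will exhibit two incomparable elements of ${\rm Kern}(G)$, both arising as kernels of linear characters.

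The first step is to identify the ``linear part'' of ${\rm Kern}(G)$. Since every linear character of $G$ factors through $G/G'$, and since an irreducible character of a finite abelian group $A$ has kernel $B$ exactly when $A/B$ embeds in $\mathbb{C}^\times$ (and hence is cyclic, as finite subgroups of $\mathbb{C}^\times$ are cyclic), I would first record that every normal subgroup $N$ of $G$ containing $G'$ with $G/N$ cyclic lies in ${\rm Kern}(G)$: lift a faithful linear character of $G/N$ to $G$.

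Next, I would invoke the structure theorem for finite abelian groups: a finite abelian group is cyclic if and only if each of its Sylow subgroups is cyclic. Assuming $G/G'$ is not cyclic, pick a prime $p$ whose Sylow $p$-subgroup $P$ of $G/G'$ is noncyclic. Then the Frattini quotient $P/\Phi(P)$ is elementary abelian of rank at least $2$, so $P$ admits at least $p+1 \ge 3$ distinct maximal subgroups. Combining each such maximal subgroup of $P$ with the remaining Sylow subgroups of $G/G'$ yields at least two distinct subgroups of $G/G'$ of index $p$.

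Finally, pulling two such subgroups back to $G$ produces distinct normal subgroups $N_1, N_2 \supseteq G'$ with $|G:N_i|=p$. Each quotient $G/N_i$ is cyclic of prime order, so by the first step $N_1, N_2 \in {\rm Kern}(G)$. But distinct subgroups of the same finite order are necessarily incomparable under inclusion, contradicting the hypothesis that ${\rm Kern}(G)$ is a chain. I do not anticipate a serious obstacle in this argument; the only point worth flagging is the dictionary between kernels of irreducible characters of an abelian group and subgroups with cyclic quotient, everything else being a direct application of the structure theorem.
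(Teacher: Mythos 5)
Your proof is correct and is essentially the paper's argument run in the contrapositive: both rest on the observation that maximal subgroups of $G$ containing $G'$ are kernels of faithful linear characters of cyclic prime-order quotients, so the chain condition on ${\rm Kern}(G)$ forces $G/G'$ to have a unique maximal subgroup. Your Sylow/Frattini step merely spells out the standard fact---invoked implicitly by the paper when it concludes cyclicity from uniqueness of the maximal subgroup---that a noncyclic finite abelian group has at least two incomparable subgroups of some prime index $p$.
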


\begin{proof}
If $G' = G$, then the result is trivially true, so we may assume that $G' < G$.  Let $M$ be a maximal subgroup subject to $G' \le M$.  Since $G/G'$ is abelian, this implies that $G/M$ is cyclic.  We can choose $\lambda \in \irr {G/M}$ so that $\lambda$ is faithful.  This implies that $\lambda \in \irr G$ and $M = \ker {\lambda}$.  This implies that $M \in {\rm Kern} (G)$.  Since ${\rm Kern} (G)$ forms a chain, $G/G'$ has a unique maximal subgroup, and thus, $G/G'$ is cyclic.
\end{proof}

Thus, we are able to answer Berkovich's initial question about $p$-groups where ${\rm Kern} (G)$ forms a chain.

\begin{corollary} \label{kntwo}
If $G$ is a nilpotent group and ${\rm Kern} (G)$ forms a chain, then $G$ is a cyclic $p$-group for some prime $p$.
\end{corollary}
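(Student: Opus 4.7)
The plan is to apply Theorem \ref{main1}, which converts the hypothesis that ${\rm Kern}(G)$ forms a chain into the stronger-looking statement that the entire lattice of normal subgroups of $G$ forms a chain. Once in this form, the corollary comes from two short observations about nilpotent groups.

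First I would use the Sylow structure. Since $G$ is nilpotent, $G$ is the internal direct product of its Sylow subgroups, each of which is normal in $G$. If two distinct primes $p,q$ divided $|G|$, the corresponding Sylow subgroups $P$ and $Q$ would be nontrivial normal subgroups with $P \cap Q = 1$, so neither contains the other. This contradicts the chain condition on normal subgroups coming from Theorem \ref{main1}. Hence $G$ must be a $p$-group for a single prime $p$.

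Next I would invoke Lemma \ref{knone} (which gives $G/G'$ cyclic) together with the standard Burnside-basis observation that in a $p$-group the Frattini quotient $G/\Phi(G) = G/G'G^p$ is a quotient of $G/G'$. Since $G/G'$ is cyclic, so is $G/\Phi(G)$; but for $p$-groups the minimum number of generators of $G$ equals $\dim_{\mathbb{F}_p} G/\Phi(G)$, and therefore $G$ is generated by a single element. Thus $G$ is a cyclic $p$-group, completing the proof.

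There is no real obstacle here: the result is genuinely a direct corollary, and the only thing to be careful about is citing the correct equivalence in Theorem \ref{main1} rather than redoing the argument that distinct Sylow subgroups prevent the kernels from forming a chain by hand.
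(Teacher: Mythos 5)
Your proof is correct, but it is organized differently from the paper's. The paper never invokes Theorem \ref{main1} here: after getting $G/G'$ cyclic from Lemma \ref{knone}, it argues that if $[G',G] < G'$ then $G'/[G',G]$ would be a central subgroup of $G/[G',G]$ with cyclic quotient, forcing $G/[G',G]$ abelian and hence $G' \le [G',G]$, a contradiction; thus $G' = [G',G]$, and nilpotency then forces $G' = 1$, so $G$ is cyclic abelian, and the $p$-group conclusion comes last from the observation that a cyclic group of composite coprime order has incomparable kernels of linear characters. You instead do the prime reduction first, converting the kernel chain into a chain of normal subgroups via Theorem \ref{main1} and using the Sylow direct-product decomposition of a nilpotent group to rule out two primes, and then get cyclicity from Lemma \ref{knone} together with the Burnside basis theorem ($G' \le \Phi(G)$ in a $p$-group, so a cyclic abelianization forces $d(G) \le 1$). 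Both routes are short and valid; the paper's has the advantage of being self-contained in the sense that it uses nilpotency directly through the lower central series and does not rely on Theorem \ref{main1} (whose proof, in the paper's layout, actually appears after this corollary, though there is no circularity either way), while yours makes the reduction to a single prime structurally transparent and replaces the ``$G'=[G',G]$ plus nilpotency'' step with the more standard Frattini-quotient argument for $p$-groups.
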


\begin{proof}
By Lemma \ref{knone}, we know that $G/G'$ is cyclic.  If $[G',G] < G'$, then $G'/[G',G]$ is a central subgroup of $G/[G',G]$ whose quotient is cyclic.  This implies that $G/[G',G]$ is abelian, and this contradicts $[G',G] < G'$.  We conclude that $G' = [G',G]$, and since $G$ is nilpotent, we conclude that $G' = 1$.  Notice that if $G$ is not a $p$-group, then the kernels of irreducible characters do not form a chain.
\end{proof}

There are many examples of nonnilpotent groups where the kernels form a chain.  The smallest such group would be $S_3$.  Any simple group will also be example.  We now prove Theorem \ref{main1}.


\begin{proof}[Proof of Theorem \ref{main1}]
Suppose ${\rm Kern} (G)$ forms a chain.  This implies that the intersections of kernels of irreducible characters are kernels of irreducible characters.  Hence, every normal subgroup of $G$ is the kernel of an irreducible character, so the normal subgroups form a chain.  Next, suppose that the normal subgroups of $G$ form a chain.  Suppose $G$ has two different chief series.  Then there exist normal subgroups $M$, $N_1$, and $N_2$ in $G$ so that $M$ is in both chief series, $N_1$ and $N_2$ are in different chief series, and $N_1/M$ and $N_2/M$ are minimal normal subgroups of $G/M$.  It follows that $N_1$ and $N_2$ are not comparable, and this contradicts the fact that the normal subgroups of $G$ form a chain.  Finally, we know that every normal subgroup of $G$ lies in some chief series, so if $G$ has a unique chief series, then the subgroups in ${\rm Kern} (G)$ must all lie in that chief series and so they form a chain.
\end{proof}

We now exclude the kernels of the linear characters.  We write 
$${\rm nlKern} (G) = \{ \ker {\chi} \mid \chi \in \nl G \}.$$  We now work to prove Theorem \ref{main2}.  This gives the classification the groups where ${\rm nlKern} (G)$ forms a chain.  One of the key points in our work is that if ${\rm nlKern} (G)$ forms a chain, then $G$ must be a nested group.  This next lemma encodes the key idea to prove this.




\begin{lemma} \label{knthree}
Let $G$ be a group.  If $\gamma, \chi \in \irr G$ satisfy $\ker {\gamma} \le \ker {\chi}$, then $Z(\gamma) \le Z(\chi)$.
\end{lemma}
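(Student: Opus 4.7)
The plan is to unpack the definition of $Z(\chi)$ via the identity $Z(\chi)/\ker{\chi} = Z(G/\ker{\chi})$, which is used throughout this paper (for example just before Lemma \ref{twoa}). Once translated into a commutator condition, the containment $\ker{\gamma} \le \ker{\chi}$ does all the work essentially for free.

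More precisely, I would proceed as follows. Fix $g \in Z(\gamma)$; the goal is to show $g \in Z(\chi)$. Since $Z(\gamma)/\ker{\gamma} = Z(G/\ker{\gamma})$, the image of $g$ is central modulo $\ker{\gamma}$, which is the same as saying $[g,h] \in \ker{\gamma}$ for every $h \in G$. Using the hypothesis $\ker{\gamma} \le \ker{\chi}$, this gives $[g,h] \in \ker{\chi}$ for every $h \in G$, i.e. the image of $g$ is central in $G/\ker{\chi}$. Applying the same identity in the other direction, $g\ker{\chi} \in Z(G/\ker{\chi}) = Z(\chi)/\ker{\chi}$, and therefore $g \in Z(\chi)$, as required.

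There is no real obstacle here; the statement is essentially a reformulation of the monotonicity of ``being central in the quotient'' under passing to larger kernels. The only thing to keep track of is the standard fact $Z(\chi)/\ker{\chi} = Z(G/\ker{\chi})$, which is invoked at both ends of the argument. Consequently, the proof should be just a few lines, and the lemma will feed directly into the observation (used in the next section of the paper) that if $\mathrm{nlKern}(G)$ forms a chain, then so do the centers $Z(\chi)$, so $G$ is nested.
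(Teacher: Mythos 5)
Your proof is correct and is essentially the paper's own argument: the paper writes the same reasoning at the level of subgroups, observing $[Z(\gamma),G] \le \ker{\gamma} \le \ker{\chi}$ and concluding $Z(\gamma)\ker{\chi}/\ker{\chi} \le Z(G/\ker{\chi}) = Z(\chi)/\ker{\chi}$, while you phrase the identical commutator computation elementwise. No differences of substance.
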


\begin{proof}
Observe that $[Z(\gamma), G] \le \ker {\gamma} \le \ker {\chi}$.  This implies that 
	$$Z(\gamma) \ker {\chi}/\ker {\chi} \le Z(G/\ker {\chi}) = Z (\chi)/\ker {\chi}.$$
\end{proof}

We show that if ${\rm nlKern} (G)$ is a chain, then $G$ is a nested group.  

\begin{lemma} \label{knfour}
If $G$ is a group so that ${\rm nlKern} (G)$ is a chain, then $G$ is a nested group.
\end{lemma}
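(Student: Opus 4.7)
The plan is to deduce the nested condition directly from Lemma \ref{knthree}, handling the linear and nonlinear cases separately. Given two characters $\chi, \psi \in \irr G$, I want to show that $Z(\chi)$ and $Z(\psi)$ are comparable.

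First I would dispose of the case where at least one of $\chi, \psi$ is linear. If $\lambda \in \irr G$ is linear, then $|\lambda(g)| = 1 = \lambda(1)$ for every $g \in G$, so $Z(\lambda) = G$. Hence $Z(\lambda)$ contains $Z(\chi)$ for any $\chi \in \irr G$, and comparability is automatic. So from here on I may assume both $\chi$ and $\psi$ lie in $\nl G$.

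Now the hypothesis kicks in: since ${\rm nlKern}(G)$ is a chain and $\ker\chi, \ker\psi \in {\rm nlKern}(G)$, we have (without loss of generality) $\ker\chi \le \ker\psi$. Applying Lemma \ref{knthree} to the pair $(\gamma, \chi) = (\chi, \psi)$ yields $Z(\chi) \le Z(\psi)$. This is exactly the nested condition, so $G$ is nested.

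There is essentially no main obstacle here beyond invoking the right lemma in the right order; the entire content has been packaged into Lemma \ref{knthree}, and the only genuinely separate case is the linear one, which is trivial because $Z(\lambda) = G$. The argument is a few lines long and requires no further machinery.
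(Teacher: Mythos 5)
Your proof is correct and follows essentially the same route as the paper's: dispose of the linear case via $Z(\lambda) = G$, then for two nonlinear characters use the chain hypothesis on their kernels and invoke Lemma \ref{knthree} to compare the centers. No gaps and nothing further is needed.
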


\begin{proof}
Suppose $\gamma, \chi \in \irr G$.  If either $\gamma$ or $\chi$ is linear, then its center is $G$, and the center of the other is contained in $G$.  Thus, we may assume that both $\gamma$ and $\chi$ are nonlinear.  Without loss of generality, we have $\ker {\gamma} \le \ker {\chi}$.  By Lemma \ref{knthree}, this implies that $Z(\gamma) \le Z(\chi)$.  We conclude that $G$ is nested.
\end{proof}

Thus, when ${\rm nlKern} (G)$ is a chain, we have the chain of centers $G = X_0 > X_1 > \dots > X_n \ge 1$ by Lemma \ref{defn}.  

Following \cite{waists}, we define a {\it waist} to be a normal subgroup $W$ of a group $G$ with the property that if $N$ is any normal subgroup of $G$, then either $N \le W$ or $W \le N$.  Mann has also studied waists in his preprint \cite{mannpre}.  In particular, Mann notes that if $W$ is a waist in $G$, then $W$ must be the unique normal subgroup of $G$ having order $|W|$.  He also mentions that in $p$-groups, the converse will also be true, but this need not be true otherwise.  We now show that if ${\rm nlKern} (G)$ is a chain, then every normal subgroup of $G$ that is properly contained in $G'$ is a waist for $G$.

\begin{lemma} \label{knfoura}
If $G$ is a nonabelian group so that ${\rm nlKern (G)}$ is a chain and $M < G'$ is normal, then $M$ is a waist for $G$.
\end{lemma}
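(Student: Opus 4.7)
The plan is to start by deriving the structural formula $N = NG' \cap L_N$ for any normal subgroup $N$ with $G' \not\le N$, where $L_N$ denotes the smallest nonlinear kernel containing $N$. This follows by decomposing the standard identity $N = \bigcap_{N \le \ker\chi} \ker\chi$ into linear and nonlinear contributions: the linear kernels that contain $N$ intersect to $NG'$, while the nonlinear ones, by the chain hypothesis on ${\rm nlKern}(G) = \{K_0 < K_1 < \cdots < K_t\}$, intersect to their minimal member $L_N$. Applying this at $N = 1$ forces $G' \cap K_0 = 1$, and applying it to $M$ gives $M = G' \cap K_a$ where $L_M = K_a$.

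Given an arbitrary $N \triangleleft G$, I would first settle the easy cases. If $G' \le N$, then $M < G' \le N$. If $a = 0$, then $M = G' \cap K_0 = 1 \le N$. Otherwise $a \ge 1$ and $L_N = K_b$ exists, and the chain forces either $a \le b$ or $b < a$. When $a \le b$ one obtains $M = G' \cap K_a \le G' \cap K_b = N \cap G' \le N$, so in all these cases $M \le N$.

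The only delicate case is $b < a$. Here the goal is $N \le M$, which will follow once $N \le G'$ is established, since then $N = G' \cap K_b \le G' \cap K_a = M$. Because $N \le L_N = K_b \le K_{a-1}$, the whole proof reduces to the containment $K_{a-1} \le G'$; this is the main obstacle. To prove it, I plan a character-twisting argument. Suppose for contradiction $h \in K_{a-1} \setminus G'$. Since $G/G'$ is abelian and $hG' \ne G'$, there is a linear character $\lambda \in \irr G$ with $\lambda(h) \ne 1$. Let $\chi \in \nl G$ satisfy $\ker\chi = K_a$, and consider the nonlinear irreducible character $\chi\lambda$. The minimality of $L_M = K_a$ yields some $g \in M \setminus K_{a-1}$, which lies in $G' \cap K_a$. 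A direct evaluation gives $(\chi\lambda)(g) = \chi(1)$ (since $g \in K_a$ gives $\chi(g) = \chi(1)$ and $g \in G'$ gives $\lambda(g) = 1$) while $(\chi\lambda)(h) = \chi(1)\lambda(h) \ne \chi(1)$ (since $h \in K_a$ but $\lambda(h) \ne 1$). Hence $g \in \ker(\chi\lambda)$ and $h \notin \ker(\chi\lambda)$. But $\ker(\chi\lambda) = K_j$ for some $j$; if $j \ge a$ then $h \in K_{a-1} \subseteq K_j$, a contradiction, while if $j \le a-1$ then $g \in K_j \subseteq K_{a-1}$, again a contradiction. Hence $K_{a-1} \le G'$, completing the proof.
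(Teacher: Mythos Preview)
Your proof is correct, and it takes a genuinely different route from the paper's.

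The paper argues by direct contradiction: assuming $M$ is not a waist, it picks a normal subgroup $N$ incomparable to $M$, reduces to the case where $N/(M\cap N)$ is a chief factor, and then in two cases ($N \le G'$ and $N \not\le G'$) explicitly builds two nonlinear irreducible characters with incomparable kernels by choosing appropriate constituents on $NM/(M\cap N)$ or on $NG'/(M\cap N)$.

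Your approach is more structural. You first prove the identity $N = NG' \cap L_N$ for every normal $N$ with $G' \not\le N$, by splitting the standard expression of $N$ as an intersection of kernels into its linear and nonlinear parts and using that ${\rm nlKern}(G)$ is a chain. This immediately gives $M = G' \cap K_a$ and $N \cap G' = G' \cap K_b$, reducing the comparison of $M$ and $N$ to the comparison of indices $a,b$ in the chain. The only nontrivial case is $b<a$, which you reduce to the single containment $K_{a-1} \le G'$, and you dispatch this with a clean twist argument: if $h \in K_{a-1}\setminus G'$, pick $\lambda$ linear with $\lambda(h)\ne 1$ and $\chi$ nonlinear with $\ker\chi = K_a$; then $\chi\lambda$ is nonlinear and its kernel can be neither $\ge K_a$ (it misses $h$) nor $\le K_{a-1}$ (it contains a $g \in M\setminus K_{a-1}$, whose existence follows from the minimality of $L_M = K_a$).

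What each approach buys: the paper's argument is self-contained and builds concrete witnesses to incomparability, but it requires handling two cases and some care with chief factors and constituents. Your argument extracts a reusable structural formula $N = NG' \cap L_N$ and isolates the single fact $K_{a-1}\le G'$ as the heart of the matter; the twist $\chi\mapsto\chi\lambda$ then gives a shorter endgame than the paper's explicit constructions.
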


\begin{proof}
Suppose that there exists a normal subgroup $N$ so that $N \cap M < N$ and $N \cap M < M$.  Let $L = N \cap M$. This implies that there exists such an $N$ where $N/L$ is a chief factor of $G$, so without loss of generality we may assume that $N/L$ is a chief factor for $G$.  We may find $1_M \ne \mu \in \irr {M/L}$.  Also, we can find $1_N \ne \nu \in \irr {N/L}$.  If $N \le G'$, then $NM \le G'$, and note that $NM/L = N/L \times M/L$.  Consider the characters $\chi \in \irr {G \mid 1_N \times \mu}$ and $\psi \in \irr {G \mid \nu \times 1_M}$.  Observe that $G'$ is not in the kernels of $\chi$ and $\psi$; so $\chi$ and $\psi$ are nonlinear.  On the other hand, $N$ is in $\ker {\chi}$ and $M$ is not in $\ker {\chi}$ and $N$ is not in $\ker {\psi}$ and $M$ is in $\ker {\psi}$.  Thus, $\ker {\chi}$ and $\ker {\psi}$ are not comparable which is a contradiction.
	
We must have $N \not\le G'$.  Thus, $G' \cap N < N$.  On the other hand, $L = M \cap N \le G' \cap N$.  Since $N/L$ is a chief factor, we have $G' \cap N = L$.  This implies that $G'N/L = G'/L \times N/L$.  Since $M < G'$, we can find $1_{G'} \ne \alpha \in \irr {G'/M}$.  By inflating $\alpha$, we may view $\alpha$ as a character in $\irr {G'/L}$.  Take $\mu$ and $\nu$ to be the same characters as the last paragraph and fix $\eta \in \irr {G'/L \mid \mu}$.  Consider the characters $\gamma \in \irr {G \mid \alpha \times \nu}$ and $\delta \in \irr {G \mid \eta \times 1_N}$.  Observe that $G'$ is not in the kernels of $\gamma$ and $\delta$; so $\gamma$ and $\delta$ are nonlinear.  On the other hand, $M$ is in $\ker {\gamma}$ and $N$ is not in $\ker {\gamma}$ and $M$ is not in $\ker {\delta}$ and $N$ is in $\ker {\delta}$.  Thus, $\ker {\gamma}$ and $\ker {\delta}$ are not comparable.  This is a contradiction.  Thus, no such $N$ can exist, and $M$ is a waist for $G$.
\end{proof}

We now obtain a result on the chain of centers for these groups.

\begin{lemma} \label{knfive}
If $G$ is a nonabelian group so that ${\rm nlKern (G)}$ is a chain and we write $G = X_0 > X_1 > \dots X_n > 1$ for the chain of centers, then $W_i/[X_i,G]$ is cyclic of prime power order where $W_i = X_i \cap G'$ for each $i$ satisfying $i = 1, \dots, n$.
\end{lemma}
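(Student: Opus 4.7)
The plan is to reduce the assertion to the classical fact that a finite abelian group whose subgroup lattice is totally ordered must be cyclic of prime power order (allowing the trivial case as degenerate). The key structural input is Lemma~\ref{knfoura}, which guarantees that every proper normal subgroup of $G'$ is a waist in $G$; this is available because Lemma~\ref{knfour} ensures $G$ is nested, so all the machinery developed for the chain of centers applies.

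First I would observe that $W_i = X_i \cap G'$ is normal in $G$, and that $[X_i,G] \le W_i$, since $[X_i,G] \le G'$ by definition and $[X_i,G] \le X_i$ because $X_i$ is normal in $G$. By Lemma~\ref{threea}, $X_i/[X_i,G] = Z(G/[X_i,G])$, so $W_i/[X_i,G]$ lies in the center of the quotient $G/[X_i,G]$; in particular it is abelian, and each of its subgroups is normal in $G/[X_i,G]$, hence corresponds to a normal subgroup $N$ of $G$ satisfying $[X_i,G] \le N \le W_i$.

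The heart of the argument is to show these intermediate normal subgroups form a chain. Since $W_i \le G'$, every such $N$ satisfies $N \le G'$. If $N < G'$, then Lemma~\ref{knfoura} tells us that $N$ is a waist of $G$, and so $N$ is comparable with every normal subgroup of $G$. Given two intermediate subgroups $N_1, N_2$: if both are proper in $G'$, being waists forces one to contain the other; if one of them, say $N_1$, equals $G'$, then automatically $N_2 \le W_i \le G' = N_1$. Hence the subgroups of $W_i/[X_i,G]$ are totally ordered by inclusion. A finite abelian group whose subgroup lattice is a chain has at most one minimal subgroup, and therefore is cyclic of prime power order (with the trivial case as a degenerate instance), giving the conclusion.

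The main (and rather mild) obstacle is that Lemma~\ref{knfoura} does not directly apply to $M = G'$ itself, so the case $W_i = G'$ requires a separate remark; however, since $G'$ is trivially comparable with every normal subgroup of $G$ lying inside it, the chain property still propagates all the way up to $W_i$, and no additional work is needed.
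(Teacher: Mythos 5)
Your proposal is correct and takes essentially the same route as the paper: the paper reduces to the case $[X_i,G]=1$, observes that every subgroup of $W_i$ is then central and hence normal in $G$, invokes Lemma~\ref{knfoura} to make these subgroups waists, and concludes that $W_i$ has a unique maximal subgroup and so is cyclic of prime power order. Your version, working through the correspondence theorem and Lemma~\ref{threea} instead of quotienting, is the same argument in mildly different clothing, and your explicit remark handling the case $W_i = G'$ (where Lemma~\ref{knfoura} does not apply to $G'$ itself) tidies a point the paper's proof leaves implicit.
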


\begin{proof}
Observe that we may assume that $[X_i,G] = 1$.  If $W_i = 1$, then the result holds.  Suppose $W_i > 1$.  Notice that any subgroup of $W_i$ will be normal in $G$, and by Lemma \ref{knfoura}, will be a waist.  Thus, $W_i$ has a unique maximal subgroup and this implies that $W_i$ is cyclic of prime power order.
\end{proof}

This next result encapsulates a portion of the work to prove Theorem \ref{main2}.

\begin{lemma} \label{knfive1}
If $G$ is a nonabelian group so that ${\rm nlKern (G)}$ is a chain and write $G = X_0 > X_1 > \dots X_n > 1$ for the chain of centers, then one of the following occurs:
\begin{enumerate}
\item $G' < X_1$, $[G',G] = [X_1,G] > X_2$, $|G':[G',G]|= p$, and $X_1/[G',G] = P/[G',G] \times Q/[G',G]$ such that $P/[G',G]$ and $Q/[G',G]$ are each cyclic of prime power order and have coprime orders.
\item $G'/(X_1 \cap G')$ is a chief factor of $G$ and $X_1/(X_1 \cap G')$ is cyclic of prime power order.
\end{enumerate}
\end{lemma}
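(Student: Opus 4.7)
The plan is to apply Theorem \ref{main4}(1) at $i = 1$ to obtain a natural dichotomy and then exploit the waist structure (Lemma \ref{knfoura}) together with the chain property on ${\rm nlKern}(G)$ to pin down the fine structure in each branch. As preliminary inputs: since ${\rm nlKern}(G)$ is a chain, $G$ is nested by Lemma \ref{knfour}; every normal subgroup $M < G'$ is a waist (Lemma \ref{knfoura}); $G/G'$ is cyclic (Lemma \ref{knone}); and $W_1 = X_1 \cap G'$ has $W_1/[X_1,G]$ cyclic of prime power order (Lemma \ref{knfive}). Theorem \ref{main4}(1) applied at $i = 1$ then yields either (A) $G' = [X_0,G] \le X_1$ with $X_0/X_1$ and $G'/[X_1,G]$ elementary abelian $p$-groups, or (B) $G'X_1/X_1$ is the unique minimal normal subgroup of $G/X_1$. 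I claim branch (A), in the strict case $G' < X_1$, will yield conclusion (1), and branch (B) will yield conclusion (2).

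For branch (B), I first upgrade the minimal-normal conclusion to a chief factor of $G$: since $G'/(X_1\cap G') \cong G'X_1/X_1$ is minimal normal in $G/X_1$, any $G$-normal $N$ with $X_1\cap G' < N < G'$ would, via Dedekind's identity, produce a proper normal subgroup of $G/X_1$ strictly between $X_1/X_1$ and $G'X_1/X_1$, a contradiction. Cyclicity of $X_1/(X_1\cap G') \cong X_1G'/G'$ is immediate from $G/G'$ cyclic. To force prime power order, I assume otherwise: there are then normal subgroups $A,B$ of $G$ strictly between $X_1\cap G'$ and $X_1$ of coprime index in $X_1$, with $A\cap B = X_1\cap G'$ and $AB = X_1$. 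Since $[X_1,G] \le X_1\cap G' \le A$, the subgroup $X_1/A$ is a central cyclic subgroup of the nonabelian group $G/A$ (nonabelian because $G' \not\le X_1$ forces $G' \not\le A$); one then constructs a nonlinear irreducible character $\chi$ of $G/A$ above a faithful linear character of $X_1/A$, forcing the kernel of $\chi$ to meet $X_1$ in exactly $A$, and analogously $\psi$ meeting $X_1$ in exactly $B$. Their kernels in $G$ are incomparable, contradicting the chain condition on ${\rm nlKern}(G)$.

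For branch (A), combining the elementary abelian conclusion from Theorem \ref{main4}(1) with Lemma \ref{knfive} applied to $W_1 = G'$ immediately gives $|G'/[X_1,G]| = p$. To show $[G',G] = [X_1,G]$ (and hence $|G'/[G',G]| = p$), I argue by contradiction: if $[G',G] < [X_1,G]$, then $[X_1,G]/[G',G]$ is a nontrivial subgroup of the abelian quotient $G'/[G',G]$, on which $G$ acts trivially, so every subgroup of $G'/[G',G]$ corresponds to a $G$-normal subgroup; invoking the waist property at $[G',G] < G'$ together with an incomparable-nonlinear-kernel construction analogous to that in branch (B) (now via characters whose kernels lie inside $G'$) produces the required contradiction. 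For the direct product decomposition of $X_1/[G',G]$: this quotient is abelian (central in $G/[G',G]$ by Lemma \ref{threea}), contains $G'/[G',G] \cong \mathbb{Z}/p$, and has cyclic quotient $X_1/G'$ (a subgroup of cyclic $G/G'$); a further chain-versus-waist argument rules out three or more distinct prime divisors of $|X_1/[G',G]|$, yielding the Sylow decomposition $P/[G',G] \times Q/[G',G]$ of coprime cyclic prime-power factors. The inequality $[G',G] > X_2$ will follow from Corollary \ref{threec} together with Lemma \ref{threea}. The hardest step throughout is the explicit character-theoretic construction producing incomparable nonlinear kernels from incomparable normal subgroups: the chain condition on ${\rm nlKern}(G)$ provides strictly more information than the mere waist property, and extracting that extra strength via careful Clifford-type induction while ensuring the induced characters remain nonlinear (i.e., do not contain $G'$ in their kernels) is the delicate crux of the argument.
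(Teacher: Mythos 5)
Your overall architecture does match the paper's (the dichotomy $G' < X_1$ versus $G' \not\le X_1$, the waist property from Lemma \ref{knfoura}, and incomparable-kernel constructions), and several pieces are sound: the chief-factor upgrade in branch (B), the derivation $|G':[X_1,G]| = p$ in branch (A) from Theorem \ref{main4}(1)(a) together with Lemma \ref{knfive}, and the waist-based exclusion of $[G',G] < [X_1,G]$. But there is a fatal input: you cite Lemma \ref{knone} for ``$G/G'$ is cyclic.'' That lemma assumes ${\rm Kern}(G)$ --- the kernels of \emph{all} irreducible characters, linear ones included --- forms a chain; here only ${\rm nlKern}(G)$ is assumed to be a chain, and the conclusion genuinely fails: for an extraspecial $p$-group $E$ (or for $E \times C_q$ with $q \ne p$, which lands in your branch (A) with $G' < X_1$), ${\rm nlKern}(G)$ is a chain while $G/G'$ is elementary abelian of large rank. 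This collapses both places you use it: the ``immediate'' cyclicity of $X_1/(X_1 \cap G')$ in branch (B), and the $P \times Q$ decomposition in branch (A), which you derive from ``$X_1/G'$ a subgroup of cyclic $G/G'$.'' These cyclicity statements are exactly where the paper does its real work. In the case $G' \not\le X_1$ it reduces to $X_1 \cap G' = 1$, so that $X_1G' = X_1 \times G'$, and proves $X_1$ has a \emph{unique maximal subgroup} by inducing characters above $\lambda_j \times \alpha$ with $\alpha \ne 1_{G'}$; the tensor factor $\alpha$ is precisely what keeps the induced characters nonlinear, i.e., it resolves the ``delicate crux'' you flagged but left open. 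In the case $G' < X_1$ it shows, again by two-character constructions, that $G'$ is the unique subgroup of order $p$ in $X_1$ (so the Sylow $p$-subgroup of $X_1$ is cyclic) and that $X_1$ contains at most one minimal normal subgroup of order prime to $p$ (so the Hall $p$-complement is cyclic of prime power order). Without those arguments, your sketch has no route to cyclicity of either factor.

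The second concrete gap is your claim that $[G',G] > X_2$ ``will follow from Corollary \ref{threec} together with Lemma \ref{threea}.'' Those lemmas yield only $[X_2,G] < [X_1,G] = [G',G]$, which says nothing about where $X_2$ itself sits relative to $[G',G]$. In the paper this is the single longest step of the proof: the waist property reduces it to excluding $[G',G] \le X_2$, and that exclusion runs through $G' \not\le X_2$, nilpotency of $G/[X_2,G]$ via Lemma \ref{sixd}, the computations $|X_1 : X_2| = p$ and $X_1 = X_2G'$, the fact that $X_2/[G',G]$ is a $q$-group for a single prime $q \ne p$, and culminates in yet another pair of induced characters (one above $1_Q \times \mu$, one above $\lambda \times \nu$) with incomparable nonlinear kernels. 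None of that machinery is recoverable from the two lemmas you cite. (One shared blemish I will not charge against you: the boundary case $G' = X_1$ falls outside both branches as you set them up, but the paper's own proof makes the identical split.)
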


\begin{proof}
Suppose first that $G' < X_1$.  We see that $[G',G] \le [X_1,G] < [X_0,G] = G'$ by Corollary \ref{threec}.    In light of Corollary \ref{nine4}, we know that $G'/[G',G]$ is an elementary abelian $p$-group for some prime $p$.  For the moment, we may assume that $[G',G] = 1$.  In the notation of Lemma \ref{knfive}, we have $W_1 = X_1 \cap G' = G'$, and by that lemma, we know that $G' = W_1 = W_1/[X_1,G]$ is cyclic.  Now, $G'$ is both elementary abelian and cyclic.  This implies that $|G'| = p$.
	
Suppose that there exists a subgroup $Y$ of order $p$ in $X_1$ that is not $G'$. Then $Y$ is central in $G$.  Let $\lambda \in \irr {G'}$ be nonprincipal, and let $\nu \in \irr Y$ be nonprincipal.  Consider $\chi \in \irr {G \mid 1_Y \times \lambda}$ and $\psi \in \irr {G \mid \nu \times \lambda}$.  Since $\lambda \ne 1_{G'}$, we conclude that $\chi$ and $\psi$ are not linear.  Observe that $\ker{\chi} \cap YG' = \ker{1_Y \times \lambda} = Y$ and $\ker {\psi} \cap YG' = \ker {\nu \times \lambda}$.  Now, $\nu \times \lambda$ is a nonprincipal irreducible character of $YG'$, so its kernel must have order $p$ since $YG'$ is an elementary abelian group of order $p^2$.  Also, $\ker {\nu \times \lambda}$ is neither $Y$ nor $G'$.  It follows that $\ker{\nu \times \lambda}$ is not comparable with $Y$.  This implies that $\ker {\chi} \cap YG'$ is not comparable to $\ker {\psi} \cap YG'$.  We deduce that $\ker {\chi}$ and $\ker {\psi}$ are not comparable, and this contradicts the fact that ${\rm nlKern} (G)$ is a chain.  Therefore, $G'$ is the only subgroup of order $p$ in $X_1$, and so, if $P$ is the Sylow $p$-subgroup of $X_1$, then $P$ is cyclic.
	
Suppose now that $Y_1$ and $Y_2$ are distinct minimal normal subgroups of $G$ whose orders are not $p$.  Let $1 \ne \nu_i \in \irr {Y_i}$.  Take $\lambda$ as in the previous paragraph.  Consider $\gamma_1 \in \irr {G \mid \nu_1 \times 1_{Y_2} \times \lambda}$ and $\gamma_2 \in \irr {G \mid 1_{Y_1} \times \nu_2 \times \lambda}$.  Since $\lambda \ne 1_{G'}$, we conclude that both $\gamma_i$ are not linear.  Also, $Y_1$ is not contained in $\ker {\gamma_1}$ and $Y_2$ is contained in $\ker {\gamma_1}$ while $Y_1$ is contained in $\ker {\gamma_2}$ and $Y_2$ is not contained in $\ker{\gamma_2}$.  It follows that $\ker {\gamma_1}$ and $\ker {\gamma_2}$ are not comparable, and this contradicts the fact that ${\rm nlKern} (G)$ is a chain.  We deduce that $X_1$ contains at most one minimal normal subgroup of $G$ whose order is not $p$.  If $Q$ is a Hall $p$-complement of $X_1$, then $X_1 = P \times Q$ and $Q$ is cyclic of prime power order for some prime other than $p$.  Notice that this implies that $X_1$ is cyclic.
	
Removing the assumption that $[G',G] = 1$, we obtain $|G':[G',G]| = p$.  Since $[G',G] \le [X_1,G] < G'$, this implies that $[G',G] = [X_1,G]$.  To prove conclusion (2), we need to show that $[G',G] > X_2$.  By Lemma \ref{knfoura} we know that $[G',G]$ is a waist for $G$, so either $[G',G] \le X_2$ or $X_2 < [G',G]$.
	
We now suppose that $[G',G] \le X_2$.  If $G' \le X_2$, then $[G',G] \le [X_2,G]$, but by Corollary \ref{threec}, we have $[X_2,G] < [X_1,G] = [G',G]$, which is a contradiction.  Therefore, we have $G' \not\le X_2$.  By Lemma \ref{sixd}, since $[G',G] = [X_1,G]$, this implies that $G/[X_2,G]$ is a nilpotent group.  Applying Lemma \ref{nine1}, we know that $G/X_2 = X_0/X_2$ and $G'/[X_2,G] = [X_0,G]/[X_2,G]$ are $p$-groups.  Using Lemma \ref{nine}, $X_1/X_2$ is an elementary abelian $p$-group.  On the other hand, we have seen that $X_1/[G',G]$ is cyclic.  We conclude that $|X_1:X_2| = p$.  Since $G'$ is not contained in $X_2$, we see that $X_1 = X_2 G'$.  Notice that $G'/[G',G]$ is the unique subgroup of order $p$ in $X_1/[G',G]$ and as $X_2$ does not contain $G'$, we conclude that $X_2/[G',G]$ is a $p'$-group.  Using the result of the previous paragraph, we see that $X_2/[G',G]$ is a $q$-group for some prime $q$.  We may now deduce that $G'/[X_2,G]$ is the Sylow $p$-subgroup of $X_1/[X_2,G]$.  Let $Q/[X_2,G]$ be the Sylow $q$-subgroup of $X_2/[X_2,G]$.  We obtain $X_1/[X_2,G] = Q/[X_2,G] \times G'/[X_2,G]$.  If $Q = [X_2,G]$, then $X_1 = G'$ which violates our assumption that $G' < X_1$.  Thus, we may assume that $[X_2,G] < Q$.  In particular, we may find $1_Q \ne \lambda \in \irr {Q/[X_2,G]}$.  Since $|G':[G',G]| = p$, we can find $\nu \in \irr {G'/[G',G]}$ so that $\ker {\nu} = [G',G]$.  We will view $\mu$ as a character in $\irr {G'/[X_2,G]}$.  Since $[X_2,G] < [G',G]$, we can find $1_{[G',G]} \ne \eta \in \irr {[G',G]/[X_2,G]}$ and take $\mu \in \irr {G'/[X_2,G] \mid \eta}$.  Consider the characters $\chi \in \irr {G \mid 1_Q \times \mu}$ and $\psi \in \irr {G \mid \lambda \times \nu}$.  Observe that $G'$ is not in the kernels of $\chi$ and $\psi$; so $\chi$ and $\psi$ are nonlinear.  On the other hand, $Q$ is in $\ker {\chi}$ and $[G',G]$ is not in $\ker {\chi}$ and $Q$ is not in $\ker {\psi}$ and $[G',G]$ is in $\ker {\psi}$.  Thus, $\ker {\chi}$ and $\ker {\psi}$ are not comparable.  This is a contradiction.  Hence, we have $X_2 < [G',G]$.

We now assume that $G' \not\le X_1$.  This implies that $X_1 \cap G' < G'$.  Thus, we may assume that $X_1 \cap G' = 1$.  It follows that $[X_1,G] \le X_1 \cap G'$, so $[X_1,G] = 1$.  Recall that $[X_0,G] = G'$.  Since $G' \not\le X_1$, we have that $X_1 G'$ is the unique minimal normal subgroup of $G/X_1$ by Lemma \ref{sixc}.  Notice that $X_1 G' = X_1 \times G'$, and this implies that $G'$ is a minimal normal subgroup of $G$.
	
We now show that $X_1$ is cyclic of prime order.  If $X_1 = 1$, then the result holds.  Suppose $X_1 > 1$ and there exist distinct subgroups $M_1$ and $M_2$ so that $M_j$ is maximal in $X_1$ for $j = 1,2$.  Notice that $X_1$ is central in $G$, so $M_j$ is normal in $G$ and $X_1/M_j$ has prime order.  Thus, we can find characters $1 \ne \lambda_j \in \irr {X_1/M_j}$ so that $\ker{\lambda_j} = M_j$.  Fix $1 \ne \alpha \in \irr {G'}$. Let $\chi_j \in \irr {G \mid \lambda_j \times \alpha}$.  Note that $\alpha$ is a constituent of $(\chi_j)_{G'}$, so $G' \not\le \ker {\chi_j}$, and hence, we have $\chi_j (1) \ne 1$.  Observe that $M_1 = \ker {\chi_1} \cap X_1$ is not comparable to $M_2 = \ker {\chi_2} \cap X_1$, and this implies that $\ker {\chi_1}$ is not comparable to $\ker {\chi_2}$, a contradiction.  Thus, $X_1$ has a unique maximal subgroup and this implies that $X_1$ is cyclic of prime power order.
	
\end{proof}

The following theorem includes Theorem \ref{main2}.

\begin{theorem}
Let $G$ be a group.  Then the following are equivalent:	
\begin{enumerate}
\item ${\rm nlKern} (G)$ forms a chain.
\item $G$ satisfies both:
\begin{enumerate}
	\item Every normal subgroup $M$ of $G$ with $1 \le M < G'$ is a waist.
	\item There is a unique subgroup $M$ so that $G'/M$ is a chief factor for $G$.  The quotient $G/M$ has at most two minimal normal subgroups.
\end{enumerate} 
\item Every normal subgroup $M$ of $G$ with $1 \le M < G'$ satisfies either: 
\begin{enumerate} 
	\item $G'/M$ is not a chief factor of $G$ and $G/M$ has a unique minimal normal subgroup, or 
	\item $G'/M$ is a chief factor of $G$ and $G/M$ has at most two minimal normal subgroups.
\end{enumerate}
\end{enumerate}
\end{theorem}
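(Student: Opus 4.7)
The plan is to prove the cycle (1) $\Rightarrow$ (2) $\Rightarrow$ (3) $\Rightarrow$ (1). The first implication bundles the lemmas already in hand, the equivalence (2) $\Leftrightarrow$ (3) is lattice-theoretic, and (3) $\Rightarrow$ (1) is where the nested structure of $G$ enters.

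For (1) $\Rightarrow$ (2), part (2)(a) is Lemma \ref{knfoura}. Uniqueness of $M$ in (2)(b) follows because two subgroups with $G'/M$ chief would be distinct maximal normals below $G'$, and hence incomparable waists by Lemma \ref{knfoura}, a contradiction. The bound of two minimal normals of $G/M$ is read from Lemma \ref{knfive1}: with $M = [G', G]$ in Case (1), the decomposition $X_1/M = P/M \times Q/M$ together with Lemma \ref{sixb} identifies the minimal normals of $G/M$ as the socles of the two cyclic factors; with $M = X_1 \cap G'$ in Case (2), they are $G'/M$ and the socle of the cyclic group $X_1/M$. For (2) $\Leftrightarrow$ (3) I argue by parallel case analysis. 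In (2) $\Rightarrow$ (3), a normal $M < G'$ with $G'/M$ non-chief admits a minimal normal $K/M$ of $G/M$ with $K < G'$, forcing $K$ to be a waist by (2)(a) and hence to coincide with any other minimal normal of $G/M$. In (3) $\Rightarrow$ (2), uniqueness and the minimal-normal bound in (2)(b) are inherited from (3)(b); and to establish (2)(a), an incomparable pair consisting of $M < G'$ and a normal $N$ produces $L = M \cap N$ to which (3) applies, yielding in either alternative a common minimal normal of $G/L$ inside both $M/L$ and $N/L$, contradicting $M \cap N = L$.

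For (3) $\Rightarrow$ (1), suppose nonlinear $\chi, \psi \in \irr G$ have incomparable kernels $K, L$, and set $M = K \cap L \cap G'$, which is normal with $1 \le M < G'$. Note that (2) implies $G$ is nested, because $Z_N = Z_{N \cap G'}$ depends only on $N \cap G'$ and these intersections form a chain by (2)(a); thus $G$ has a chain of centers $G = X_0 > X_1 > \cdots > X_n$. Apply (3) to $M$. Case (3a) and the unique-minimal-normal subcase of (3b) give immediate contradictions (the former from the unique minimal normal lying in both $K/M$ and $L/M$; the latter from nonlinearity of $\chi$), after handling the trivial subcase $K = M$ or $L = M$ via the waist property just proved. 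If (3b) provides a second minimal normal $C/M$ distinct from $G'/M$, nonlinearity forces $C \le K \cap L$, and the case $C \le G'$ contradicts $K \cap L \cap G' = M$.

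The main obstacle is the remaining subcase, $C \not\le G'$ with $C \cap G' = M$. Here the identifications $K \cap G' = M = L \cap G'$ (from chiefness of $G'/M$ together with nonlinearity) give $[K, G], [L, G] \le M$, so $K/M$ and $L/M$ lie in $Z(G/M) = X_i/M$ for the index $i$ supplied by Lemma \ref{sixa}. Since $C/M$ is a central minimal normal, and the minimal subgroups of $X_i/M$ are among the at-most-two minimal normals of $G/M$, the group $X_i/M$ is either cyclic of prime power order or the direct product of two coprime cyclic prime-power groups, according to whether $G' \le X_i$. In either case the constraint $K/M \cap G'/M = 1 = L/M \cap G'/M$ confines $K/M, L/M$ to a common cyclic prime-power subgroup of $X_i/M$, whose subgroup lattice is a chain, forcing $K$ and $L$ to be comparable and contradicting incomparability.
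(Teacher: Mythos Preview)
Your proof is correct and closely parallels the paper's: $(1)\Rightarrow(2)$ and $(2)\Leftrightarrow(3)$ match the paper's arguments essentially verbatim, and your $(3)\Rightarrow(1)$ differs from the paper's $(2)\Rightarrow(1)$ only in that you set $M=K\cap L\cap G'$ (so you must dispose of the easy extra possibility that $G'/M$ is not chief) rather than taking from the start the canonical $M$ with $G'/M$ chief---both versions then reduce to showing $K/M,L/M\le Z(G/M)$ are confined to a common cyclic prime-power factor because $Z(G/M)$ has at most two minimal subgroups. Your detour through nestedness to write $Z(G/M)=X_i/M$ via Lemma~\ref{sixa} is correct but inessential, since nothing you do afterward uses the $X_i$-label beyond the fact that this subgroup equals $Z(G/M)$; the paper simply works with $Z(G/M)$ directly.
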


\begin{proof}
We first show that (1) and (2) are equivalent.  Suppose Condition (1).  By Lemma \ref{knfoura}, every normal subgroup $M$ of $G$ with $1 \le M < G'$ is a waist.  Let $M$ be a subgroup of $G'$ that is normal in $G$ so that $G'/M$ is a chief factor of $G$.  Since $M$ is a waist, it is the unique normal subgroup of $G$ that is contained in $G'$ such that $G'/M$ is a chief factor.  Observe that $G'/M$ is a minimal normal subgroup of $G/M$.  Suppose $N/M$ is some other minimal normal subgroup of $G/M$.  It follows that $N \cap G' = M$ and so, $N/M \le Z(G/M)$.  We now apply Lemma \ref{knfive1}.  From that lemma, we have that $M = [G',G]$ when $G' < X_1$ and $M = X_1 \cap G'$ otherwise.
	
In the case when $G' < X_1$, we see that $X_1/[G',G] = P/[G',G] \times Q/[G',G]$ where $P/[G',G]$ and $Q/[G',G]$ are cyclic groups of coprime prime power orders.  We know that $[G',G] = [X_1,G]$ and $X_1/[X_1,G] = Z(G/[X_1,G])$.   Note that $G' \le X_1$, and so, it follows that the only possible minimal normal subgroups of $G/M$ are the socles of $P/M$ and $Q/M$.  Therefore, $G/M$ has at most two minimal normal subgroups in this case.
	
Now, suppose that $G' \not< X_1$.  We have that $M = G' \cap X_1$.  Observe that $[X_1,G] \le X_1 \cap G' = M$, and so, $X_1/M \le Z(G/M)$.  Since $G/M$ is not abelian, we have $Z(G/M) < X_0/M$, and thus, $Z(G/M) = X_1/M$.  We know that $X_1/M$ is cyclic of prime power order.  Thus, the only possible minimal normal subgroups of $G/M$ are $G'/M$ and the socle of $X_1/M$.  We conclude that $G/M$ has at most two minimal normal subgroups of $G$.  This proves Condition (2).
	
Suppose Condition (2).  Hence, every normal subgroup $M$ of $G$ with $1 \le M < G'$ is a waist and when $M$ satisfies $G'/M$ is a chief factor for $G$, the quotient $G/M$ has at most two minimal normal subgroups.  Let $\chi$ and $\psi$ are nonlinear irreducible characters of $G$.  If either $\ker {\chi}$ or $\ker {\psi}$ is a proper subgroup of $G'$, then it will be a waist, and we must have that $\ker {\chi} \le \ker {\psi}$ or $\ker{\psi} \le \ker {\chi}$ as desired.  Thus, we have that neither $\ker {\chi}$ nor $\ker{\psi}$ is contained in $G'$.
	
Let $M$ be a normal subgroup of $G$ contained in $G'$ so that $G'/M$ is a chief factor for $G$.  We know that $M$ is a waist, and since $M$ is a waist, it will be the unique normal subgroup of $G$ contained in $G'$ whose quotient is a chief factor for $G$.  Observe that $G'/M$ is a minimal normal subgroup of $G/M$.
	
Clearly, neither $\ker {\chi}$ nor $\ker {\psi}$ is contained in $M$, and since $M$ is waist, we must have $M \le \ker {\chi}$ and $M \le \ker {\psi}$.  Since $\chi$ and $\psi$ are not linear, we must have $\ker {\chi} \cap G' < G'$ and $\ker {\psi} \cap G' < G'$.   We deduce that $\ker {\chi} \cap G' = M = \ker {\psi} \cap G'$.  It follows that $\ker {\chi}/M \le Z(G/M)$ and $\ker {\psi}/M \le Z(G/M)$.
	
There are two possibilities: either $G'/M \le Z(G/M)$ or $G'/M \not\le Z(G/M)$.  For the first possibility, take $p$ to be the prime that divides $|G':M|$, and let $P/M$ be the Sylow $p$-subgroup of $Z(G/M)$.  If $P/M$ is not cyclic, then it will contain at least $p+1$ minimal normal subgroups which are necessarily minimal normal in $G/M$ which is a contradiction.  Let $Q/M$ be the Hall $p$-complement of $Z(G/M)$.  Since $Q/M$ contains at most one minimal normal subgroup, we deduce that $Q/M$ is cyclic of prime power order.  Since $\ker {\chi}/M$ and $\ker {\psi}/M$ are contained in $Z(G/M)$ and do not contain $G'/M$, we conclude that $\ker {\chi}$ and $\ker {\psi}$ are contained in $Q$.  Since $Q/M$ is cyclic of prime power order, we have $\ker {\chi} \le \ker{\psi}$ or $\ker {\psi} \le \ker {\chi}$ as desired in this case.
	
Now, suppose that $G'/M \not\le Z(G/M)$.  Now, $Z(G/M)$ contains at most one minimal normal subgroup, and so, $Z(G/M)$ must be cyclic of prime power order.  Since $\ker {\chi}/M$ and $\ker {\psi}/M$ are contained in $Z(G/M)$, this implies that $\ker {\chi} \le \ker{\psi}$ or $\ker {\psi} \le \ker {\chi}$, and we have Condition (1).
	
We now show that (2) and (3) are equivalent.  Suppose (2).  Let $M < G'$ be normal in $G$.  If $G'/M$ is a chief factor for $G$, we know that $G/M$ has at most two minimal normal subgroups of $G$.  We suppose that $G'/M$ is not a chief factor for $G$.  We can find $M < N < G'$ so that $N/M$ is minimal normal in $G/M$.  By (2), we know $N$ is a waist for $G$.  If $K/M$ is minimal normal in $G$, then either $K \le N$ or $N \le K$.  Since $N/M$ is minimal normal, the first case yields $K = N$, and since $K/M$ is minimal normal, the second case yields the same thing.  Therefore, $G/M$ has a unique minimal normal subgroup.  This proves (3).
	
Suppose (3).  Let $M < G'$ be normal in $G$.  If $G'/M$ is a chief factor for $G$, we know that $G/M$ has at most two minimal normal subgroups of $G$.  We suppose that $G'/M$ is not a chief factor for $G$.  We need to show that $M$ is a waist for $G$.  Let $N$ be a normal subgroup of $G$.  If $M \cap N = M$, then $M \le N$.  Thus, we suppose that $M \cap N < M$.  We know that $G/(M \cap N)$ contains a unique minimal normal subgroup $L/(M \cap N)$.  Since $M \cap N < M$, it follows that $L \le M$.  If $M \cap N < N$, then $L \le N$.  This implies that $M \cap N < L \le M \cap N$ which is a contradiction.  Thus, we must have $M \cap N = N$ and so, $N \le M$.  This implies (2).
\end{proof}

\section{Nested by degrees} \label{degs}

We say that $G$ is a group that is {\it nested by degrees} if all characters $\chi, \psi \in \irr G$ satisfying $\psi (1) \le \chi (1)$ also satisfy $Z (\chi) \le Z (\psi)$.   Obviously, in this case, $G$ is a nested group, so $G$ will have the chain of centers $G = X_0 > X_1 > \dots > X_n \ge 1$.  If $\chi, \nu \in \irr G$ with $\chi (1) = \nu (1)$, then $\chi (1) \le \nu (1)$ implies $Z (\nu) \le Z (\chi)$ and $\nu (1) \le \chi (1)$ implies $Z (\chi) \le Z(\nu)$.  Thus, irreducible characters with the same degrees have the same centers.

Let $1 = d_0 < d_1 < \cdots < d_r$ be the distinct irreducible character degrees for $G$.  Define $Y_j = Z (\chi)$ when $\chi \in \irr G$ satisfies $\chi (1) = d_j$.  We know that there is a positive integer $i$ so that $Y_j = X_i$, and by Lemma \ref{twoa}, $[Y_j,G] = [X_i,G] \le \ker{\chi}$.  We say that $Y_j$ is the $j$th degree center of $G$. Observe that $G = Y_0 \ge Y_1 \ge \cdots \ge Y_r = Z (G)$.  

\begin{lemma}
Let $G$ be nested by degrees, let $Y_j = Y_j (G)$ be defined as above with $0 \le j \le r$ where $r = |\cd G|$, and let $G = X_0 > X_1 > \dots > X_n \ge 1$ be the chain of centers for $G$.  Then the following hold:
\begin{enumerate}
	\item $r \ge n$,
	\item $X_i \le Y_i$ for $i = 1, \dots n$,
	\item for each integer $j \in \{ 1, \dots, r \}$, there exists a unique integer $i \le j$ so that $Y_j = X_i$,
	\item $Y_1 = X_1 < G$,
	\item for all integer $j \in \{ 1, \dots r\}$, we have $Y_{j+1} < Y_j$ if and only if $[Y_{j+1},G] < [Y_j,G]$.
\end{enumerate} 
\end{lemma}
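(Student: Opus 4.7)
The plan is to reduce all five parts to the study of the map $\phi \colon \{0, 1, \dots, r\} \to \{0, 1, \dots, n\}$ defined by requiring $Y_j = X_{\phi(j)}$. This is well-defined because the $X_i$'s are distinct. The first thing I would check is that $\phi$ is non-decreasing and surjective. Monotonicity comes from the nested-by-degrees hypothesis: if $j < j'$ then $d_j < d_{j'}$ forces $Y_{j'} \le Y_j$, hence $\phi(j) \le \phi(j')$ since the $X_i$ chain is strictly decreasing in $i$. Surjectivity comes from Lemma \ref{defn}: every $X_i$ equals $Z(\chi)$ for some $\chi \in \irr G$, and that $\chi$ has some degree $d_j$, giving $X_i = Y_j$.

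From surjectivity I would immediately get part (1), since a non-decreasing surjection $\{0,\dots,r\} \to \{0,\dots,n\}$ forces $r \ge n$. For parts (2) and (3), the key estimate I will establish is $\phi(j) \le j$ for all $j$. For this, let $j_m$ denote the smallest index with $\phi(j_m) = m$, which exists by surjectivity; then $j_0 = 0$ (since $\phi(0)$ must be the minimum value of $\phi$), and $j_m > j_{m-1}$ by minimality, so by induction $j_m \ge m$. Then for any $j$ with $\phi(j) = k$, monotonicity gives $j \ge j_k \ge k$. This translates into $Y_j = X_{\phi(j)} \ge X_j$ for each $j \le n$, settling (2); and (3) follows immediately, with uniqueness of $\phi(j)$ being a consequence of the $X_i$'s being distinct.

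For (4), I would note that $d_1 > d_0 = 1$ forces any $\chi \in \irr G$ of degree $d_1$ to be nonlinear, hence $Z(\chi) = Y_1$ is properly contained in $G = X_0$, so $\phi(1) \ne 0$. Combined with $\phi(1) \le 1$ from the previous paragraph, this forces $\phi(1) = 1$, i.e., $Y_1 = X_1 < G$.

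Finally, (5) reduces to Corollary \ref{threec}. If $Y_{j+1} < Y_j$ then $\phi(j+1) > \phi(j)$, and iterating the strict inclusions $[X_i, G] < [X_{i-1}, G]$ for $\phi(j) < i \le \phi(j+1)$ yields $[Y_{j+1}, G] < [Y_j, G]$. The converse is immediate, since a strict inclusion of commutators certainly forces $Y_{j+1} \ne Y_j$, and then monotonicity of $\phi$ gives $Y_{j+1} < Y_j$. The whole argument is essentially bookkeeping once the map $\phi$ is set up; the only place to slip is in tracking the reversal between the subgroup ordering of the $X_i$'s and the index ordering, which is why I would fix $\phi$ at the start and work through all five parts with it.
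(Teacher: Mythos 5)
Your proof is correct and takes essentially the same route as the paper's: both rest on the correspondence between the degree centers $Y_j$ and the chain of centers $X_i$ being monotone and surjective with $Y_j = X_i$ for some $i \le j$, together with Corollary \ref{threec} for part (5). The only differences are cosmetic --- your induction on the indices $j_m$ makes explicit the bound $\phi(j) \le j$ that the paper dismisses as ``not difficult to see,'' and in (5) you iterate Corollary \ref{threec} over several steps where the paper instead notes that consecutive distinct $Y_j$'s must be consecutive $X_i$'s (which follows from monotonicity plus surjectivity of your $\phi$) and applies it once.
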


\begin{proof}
Notice that the distinct $Y_j$'s will form a chain of centers in the sense of Lemma \ref{defn}.   The $Y_j$'s need not be distinct so we do not necessarily have $X_j = Y_j$.   It is not difficult to see that, $X_i \le Y_i$ for $i = 0, 1, \dots, n$, and for each $j = 1, \dots r$, we have $Y_j = X_i$ for some $i \le j$.  Since the $X_i$'s are distinct, it follows that $n \le r$.  Note that $Y_1 = X_1 < G$.  Applying Lemma \ref{threea}, we see that $Y_j/[Y_j,G] = X_i/[X_i,G] = Z(G/[X_i,G]) = Z(G/[Y_j,G])$.  Observe that if $Y_j < Y_{j+1}$, then $Y_{j+1} = X_{i+1}$; so $[Y_j,G] < [Y_{j+1},G]$ by Corollary \ref{threec}.  Conversely, if $Y_{j+1}= Y_j$, then $[Y_j,G] = [Y_{j+1},G].$  We deduce that $Y_{j+1} < Y_j$ if and only if $[Y_{j+1},G] < [Y_j,G]$.
\end{proof}

When $N$ is a normal subgroup of $G$, we write $\irr {G \mid N}$ for the characters in $\irr G$ whose kernels do not contain $N$ and $\cd {G \mid N} = \{ \chi (1) \mid \chi \in \irr {G \mid N}\}$.  We now assume that $G$ is a nested group where $Y_i < Y_{i-1}$ for some or all $3 \le i \le r$.

\begin{lemma} \label{seven}
Suppose $G$ is nested by degrees with $G= Y_0 \ge Y_1 \ge \cdots \ge Y_r$ the degree centers.
\begin{enumerate}
\item If $i < j$ satisfies $Y_i > Y_j$ and $\nu \in \irr G$ satisfies $\nu (1) = d_j$, then $[Y_i,G] \not\le \ker {\nu}$.
\item If $Y_{i+1} < Y_i$, then $\cd {G/[Y_i,G]} = \{ 1, d_1, \dots, d_i \}$.
\item If $i < j$ satisfies $Y_i > Y_{i+1} = Y_j > Y_{j+1}$, then 
	$$\cd {G/[G,Y_j] \mid [G,Y_i]/[G,Y_j]} = \{ d_{i+1}, \dots, d_j \}.$$
\end{enumerate}
\end{lemma}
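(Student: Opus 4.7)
The plan is to reduce each claim to a statement about containments of the $Y_k$'s and the subgroups $[Y_k,G]$, using the fact that in a group nested by degrees a character $\chi$ with $\chi(1) = d_s$ satisfies $Z(\chi) = Y_s$, together with Lemmas \ref{twoa} and \ref{threeb}.

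For (1), fix $\nu \in \irr G$ with $\nu(1) = d_j$, so that $Z(\nu) = Y_j$. Writing $Y_i = X_k$ and $Y_j = X_l$ in terms of the chain of centers, the hypothesis $Y_i > Y_j$ gives $k < l$, hence $X_{l-1} \le X_k$ and therefore $[X_{l-1},G] \le [X_k,G] = [Y_i,G]$. Lemma \ref{threeb} forces $[X_{l-1},G] \not\le \ker{\nu}$, and so $[Y_i,G] \not\le \ker{\nu}$.

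For (2), I will show that $\chi \in \irr G$ satisfies $[Y_i,G] \le \ker{\chi}$ if and only if $\chi(1) \in \{1,d_1,\ldots,d_i\}$. If $[Y_i,G] \le \ker{\chi}$ and $\chi(1) = d_s$, then $Y_i/\ker{\chi}$ lies in $Z(G/\ker{\chi}) = Z(\chi)/\ker{\chi}$, so $Y_i \le Z(\chi) = Y_s$; since the hypothesis $Y_{i+1} < Y_i$ prevents $s > i$ (which would give $Y_s \le Y_{i+1} < Y_i$), we must have $s \le i$. Conversely, if $s \le i$ then $Y_s \ge Y_i$, so $[Y_i,G] \le [Y_s,G] \le \ker{\chi}$ by Lemma \ref{twoa}. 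Since each $d_s$ with $s \le i$ is achieved by some $\chi \in \irr G$, we obtain $\cd{G/[Y_i,G]} = \{1,d_1,\ldots,d_i\}$.

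Finally, for (3), $\irr{G/[Y_j,G] \mid [Y_i,G]/[Y_j,G]}$ is precisely the set of $\chi \in \irr G$ with $[Y_j,G] \le \ker{\chi}$ and $[Y_i,G] \not\le \ker{\chi}$. Applying (2) with $j$ in place of $i$ (valid since $Y_{j+1} < Y_j$) forces $\chi(1) \in \{1,d_1,\ldots,d_j\}$, while (2) as stated excludes $\chi(1) \in \{1,d_1,\ldots,d_i\}$; thus $\chi(1) \in \{d_{i+1},\ldots,d_j\}$. Conversely, for each $s$ with $i+1 \le s \le j$, the condition $Y_{i+1} = Y_j$ collapses $Y_s = Y_j$, so any $\chi \in \irr G$ with $\chi(1) = d_s$ satisfies $Z(\chi) = Y_j$, giving $[Y_j,G] \le \ker{\chi}$ by Lemma \ref{twoa}; and $[Y_i,G] \not\le \ker{\chi}$ follows from part (1), since $Y_i > Y_s$. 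No step presents a real obstacle here — the argument is largely bookkeeping — but some care is needed to distinguish the chain of centers $(X_k)$ from the degree-indexed sequence $(Y_s)$, whose possible repetitions encode characters of distinct degrees sharing a common center.
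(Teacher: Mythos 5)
Your proposal is correct and follows essentially the same route as the paper: all three parts reduce to the correspondence $[Y,G] \le \ker{\chi} \Leftrightarrow Y \le Z(\chi)$ together with Lemma \ref{twoa}, with (2) and (3) deduced from the degree--center dictionary exactly as in the paper's proof. Your detour through the $X$-chain and Lemma \ref{threeb} in part (1), and your direct verification of each $d_s$ in part (3) in place of the paper's comparison of $\cd{G/[Y_i,G]}$ with $\cd{G/[Y_j,G]}$, are only cosmetic variations (note that the exclusion you attribute to ``(2) as stated'' really rests on the iff you establish inside your proof of (2), not on the set equality alone).
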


\begin{proof}
Suppose that $Y_i > Y_j$.  If $[Y_i,G] \le \ker {\nu}$, then Lemma \ref{twoa} would imply that $Y_i \le Z(\nu) = Y_j$, a contradiction.  This proves (1).  Suppose $Y_{i+1} < Y_i$.  Fix a integer $k \le i$, and consider a character $\nu \in \irr G$ with $\nu (1) = d_k$.  We saw above that $\ker {\nu} \ge [Y_k,G] \ge [Y_i,G]$, and so, $\nu \in \irr {G/[Y_i,G]}$ and $d_j \in \cd {G/[Y_i,G]}$.  If $k > i$, then (1) implies that $[Y_i,G]$ is not contained in $\ker {\nu}$, and so, $d_k \not\in \cd {G/[Y_i,G]}$.  This proves (2).  Now, suppose that $Y_i > Y_{i+1} = Y_j > Y_{j+1}$. By (2), $\cd {G/[Y_i,G]} = \{ 1, \dots, d_i \}$ and $\cd {G/[Y_j,G]} = \{ 1, \dots, d_j \}$.  It follows that $\{ d_{i+1}, \dots, d_j \} \subseteq \cd {G/[Y_j,G] \mid [Y_i,G]/[Y_j,G]}$.  Suppose $\chi \in \irr {G/[Y_j,G] \mid [Y_i,G]/[Y_j,G]}$.  Since $[Y_i,G] \not\le \ker{\chi}$, we know that $Y_i \not\le Z(\chi)$, and so, $\chi (1) > d_i$.  Conclusion (3) now follows.  
\end{proof}

Applying Lemma \ref{sixd} when $G$ is nilpotent and nested by degrees, we have that $Y_i/Y_{i+1} \le Z (G/Y_{i+1})$ for all $i$ with $0 \le i \le r-1$.  (This is was previously proved by Nenciu in Proposition 2.4 of \cite{nested} for $p$-groups that are nested by degrees).  We will now show that this is an equality when $Y_i < Y_{i+1}$.

\begin{lemma} \label{four}
Suppose $G$ is nested by degrees with $G= Y_0 \ge Y_1 \ge \cdots \ge Y_r$ the degree centers and $1 = d_0 < d_1 < \cdots < d_r$ the distinct irreducible character degrees, $N$ is a normal subgroup of $G$, and $i$ is the maximum index so that $d_i \in \cd {G/N}$.  Then $Z(G/N) = Y_i/N$.  In particular, if $Y_{i+1} < Y_i$, then $Y_i/[Y_i,G] = Z (G/[Y_i,G])$.
\end{lemma}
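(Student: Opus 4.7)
The plan is to read off the center of $G/N$ as the intersection of the centers of characters of $G/N$, and exploit the nested-by-degrees hypothesis to identify those centers.

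First I would note that the irreducible characters of $G/N$ are precisely those $\theta \in \irr G$ with $N \le \ker\theta$, and that, because $N \le \ker\theta \le Z(\theta)$ for each such $\theta$, we have the standard identity
$$Z(G/N) = \Bigl(\bigcap_{\theta \in \irr{G/N}} Z(\theta)\Bigr)\Big/N.$$
In particular, taking $\chi \in \irr{G/N}$ with $\chi(1) = d_i$ (which exists by the choice of $i$), we get $N \le \ker\chi \le Z(\chi) = Y_i$, so $Y_i/N$ is a meaningful subgroup of $G/N$.

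Next I would check both inclusions. For each $\theta \in \irr{G/N}$, write $\theta(1) = d_j$ with $j \le i$ (by maximality of $i$). Since $G$ is nested by degrees, $Z(\theta) = Y_j \supseteq Y_i$. Intersecting over all such $\theta$ gives $\bigcap_\theta Z(\theta) \supseteq Y_i$, hence $Z(G/N) \supseteq Y_i/N$. Conversely, for the particular $\chi$ above with $Z(\chi) = Y_i$, we have $\bigcap_\theta Z(\theta) \subseteq Z(\chi) = Y_i$, so $Z(G/N) \subseteq Y_i/N$. Combining yields $Z(G/N) = Y_i/N$.

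For the ``in particular'' clause, I would apply the first part with $N = [Y_i,G]$. Under the assumption $Y_{i+1} < Y_i$, Lemma \ref{seven}(2) gives $\cd{G/[Y_i,G]} = \{1, d_1, \dots, d_i\}$, so $i$ is indeed the maximum index with $d_i \in \cd{G/[Y_i,G]}$, and the first statement delivers $Z(G/[Y_i,G]) = Y_i/[Y_i,G]$. There is no real obstacle here: the argument is essentially a bookkeeping step that converts ``nested by degrees'' plus knowledge of which degrees survive in $\cd{G/N}$ into a statement about $Z(G/N)$, and it rests only on Lemma \ref{seven}(2) and the definition of nested by degrees.
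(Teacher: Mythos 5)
Your proof is correct and takes essentially the same approach as the paper: both identify $Z(G/N)$ with $\bigl(\bigcap_{\theta \in \irr{G/N}} Z(\theta)\bigr)/N$, use the maximality of $i$ together with the nested-by-degrees hypothesis to obtain $Y_i \le Z(\theta)$ for every $\theta \in \irr{G/N}$, and use a character of degree $d_i$ to get the reverse inclusion. The ``in particular'' clause is handled identically, by invoking Lemma \ref{seven}(2) to see that $d_i$ is the maximal degree in $\cd{G/[Y_i,G]}$ and then applying the first part with $N = [Y_i,G]$.
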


\begin{proof}
Let $Z/N = Z (G/N)$.  Observe that $Z = \cap_{\theta \in \irr {G/N}} Z (\theta)$.  Let $\theta \in \irr {G/N}$, and observe that $\theta (1) \le d_i$.  Thus, $Y_i \le Z (\theta)$.  Since this is true for all $\theta \in \irr {G/N}$, we have $Y_i \le \cap_{\theta \in \irr {G/N}} Z(\theta) = Z$.  Suppose now that $\theta (1) = d_i$, and note that $Z \le Z (\theta) = Y_i$.  We conclude that $Z = Y_i$.  When $Y_{i+1} < Y_i$, we can apply Lemma \ref{seven} to see that $d_i$ is the maximal degree in $\cd {G/[Y_i,G]}$, and the second conclusion now follows from the first.
\end{proof}

We say that a group $G$ is {\it strictly nested by degrees} if $G$ is nested by degrees and all characters $\chi, \psi \in \irr G$ satisfying $\psi (1) < \chi (1)$ have $Z (\chi) < Z (\psi)$.  The following shows that the $X_i$'s and the $Y_i$'s are identical when $G$ is strictly nested by degrees.

\begin{lemma} \label{fivea}
Let $G$ be nested by degrees with the chain of centers $G = X_0 > X_1 > \dots > X_n \ge 1$ and $G= Y_0 \ge Y_1 \ge \cdots \ge Y_r$ the degree centers.  Then the following are equivalent:
\begin{enumerate}
\item $G$ is strictly nested by degrees,
\item $Y_i > Y_{i+1}$ for $i = 0, \dots, r - 1$,
\item $r = n$ and $X_i = Y_i$ for $i = 0, \dots, r$.
\end{enumerate}
\end{lemma}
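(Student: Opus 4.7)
The plan is to prove Lemma \ref{fivea} by a cyclic implication (1) $\Rightarrow$ (2) $\Rightarrow$ (3) $\Rightarrow$ (1). The key observation underlying everything is that every $X_i$ equals some $Y_j$ and vice versa, because each $X_i$ is the center of some $\chi \in \irr G$ which has some degree $d_j$, so $X_i = Z(\chi) = Y_j$; conversely, every $Y_j$ is by definition a $Z(\chi)$ and hence some $X_i$. Thus the set $\{Y_0,\dots,Y_r\}$ and the set $\{X_0,\dots,X_n\}$ coincide as sets, though the former may have repetitions and the latter is strictly decreasing.

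For (1) $\Rightarrow$ (2), I would simply unpack the definition: given $i$ with $0 \le i \le r-1$, pick $\psi, \chi \in \irr G$ with $\psi(1) = d_i$ and $\chi(1) = d_{i+1}$. Since $d_i < d_{i+1}$, strict nestedness by degrees yields $Z(\chi) < Z(\psi)$, i.e.\ $Y_{i+1} < Y_i$.

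For (2) $\Rightarrow$ (3), assume the chain $G = Y_0 > Y_1 > \cdots > Y_r$ is strictly decreasing. Then it has $r+1$ distinct terms. By the observation above these are exactly the distinct terms of the chain of centers $X_0 > X_1 > \cdots > X_n$, which has $n+1$ distinct terms. Hence $r = n$, and since both chains are strictly decreasing enumerations of the same set, $X_i = Y_i$ for $0 \le i \le r$.

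For (3) $\Rightarrow$ (1), take $\chi, \psi \in \irr G$ with $\psi(1) < \chi(1)$; say $\psi(1) = d_i$ and $\chi(1) = d_j$ with $i < j$. Then $Z(\psi) = Y_i = X_i$ and $Z(\chi) = Y_j = X_j$, and $i < j$ forces $X_j < X_i$ since the $X_k$'s are strictly decreasing. Thus $Z(\chi) < Z(\psi)$, so $G$ is strictly nested by degrees. I do not expect a real obstacle here: once the set-equality between $\{Y_j\}$ and $\{X_i\}$ is noted, the argument is essentially bookkeeping between the two enumerations, and no deeper structural input (such as Lemmas \ref{threea}, \ref{sixa} or \ref{four}) is required.
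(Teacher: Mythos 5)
Your proof is correct and follows essentially the same route as the paper: the same cyclic scheme $(1)\Rightarrow(2)\Rightarrow(3)\Rightarrow(1)$, with $(1)\Rightarrow(2)$ by comparing characters of consecutive degrees $d_i < d_{i+1}$, $(2)\Rightarrow(3)$ by recognizing the strictly decreasing $Y_j$-chain as the chain of centers in the sense of Lemma \ref{defn}, and $(3)\Rightarrow(1)$ by strict monotonicity of the $X_i$'s. Your explicit remark that $\{Y_0,\dots,Y_r\}$ and $\{X_0,\dots,X_n\}$ coincide as sets is exactly what the paper leaves implicit in the phrase ``by the definition of the $X_i$'s,'' so there is no substantive difference.
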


\begin{proof}
Suppose $G$ is strictly nested by degrees.  If $\chi, \psi \in \irr G$ satisfy $\chi (1) = d_i$ and $\psi (1) = d_{i+1}$, then $\chi (1) < \psi (1)$, and so, $\ker {\chi} > \ker {\psi}$.  We have $\ker {\chi} = Y_i$ and $\ker {\psi} = Y_{i+1}$.  This implies that $Y_i > Y_{i+1}$.  This proves (1) implies (2).  Note that if (2) holds, then $G = Y_0 > Y_1 > \dots > Y_r \ge 1$ is the chain of centers of the characters of $G$.  By the definition of the $X_i$'s, we have $r = n$ and $X_i = Y_i$ for $i = 0, \dots, r$ which is (3). Finally, if (3) holds and $\chi, \psi \in \irr G$ satisfy $\chi (1) = d_i$ and $\psi (1) = d_j$ with $j > i$, then $Z (\chi) = Y_i = X_i > X_j = Y_j = Z (\psi)$, and we conclude that (1) holds.
\end{proof}

\section{Nested GVZ-groups}

We now turn to nested GVZ-groups.  In Proposition 1.2 in \cite{gvz}, it was proved that if $G$ is a GVZ-group, then $G$ is nilpotent.  In light of Corollary \ref{nine5}, we know that nilpotent nested groups are the direct product of a nested $p$-group for some prime $p$ with an abelian group.  Therefore, we may assume that nested GVZ-groups are $p$-groups for some prime $p$.  

\begin{lemma} \label{strict}
Let $G$ be a GVZ group.  Then the following are equivalent:
\begin{enumerate}
\item $G$ is a nested group,
\item $G$ is nested by degrees,
\item $G$ is strictly nested by degrees.
\end{enumerate}
Suppose now that $G$ is a nested GVZ group and write $G = X_0 > X_1 > \dots > X_n \ge 1$ for the chain of centers of $G$.  If $1 = d_0 < d_ 1 < \dots < d_r$ are the degrees in $\cd G$, then $r = n$ and $d_i^2 = |G:X_i|$ for all integers $0 \le i \le r$.
\end{lemma}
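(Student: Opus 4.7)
The plan hinges on the standard GVZ degree identity $\chi(1)^2 = |G:Z(\chi)|$ for every $\chi \in \irr G$, which I would record first: since $\chi$ vanishes off $Z(\chi)$ and $|\chi(g)| = \chi(1)$ for every $g \in Z(\chi)$, first orthogonality gives
$$|G| = \sum_{g \in G} |\chi(g)|^2 = \sum_{g \in Z(\chi)} \chi(1)^2 = |Z(\chi)|\chi(1)^2.$$
This single identity translates an inclusion of centers directly into a comparison of degrees in the opposite direction: for $\chi,\psi \in \irr G$ we have $Z(\chi) \le Z(\psi)$ if and only if $\chi(1) \ge \psi(1)$, with strict inclusion corresponding to strict inequality. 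Once this dictionary is in hand the proof is essentially bookkeeping.

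For $(1)\Rightarrow(2)$, I would take $\chi,\psi \in \irr G$ with $\psi(1) \le \chi(1)$. Since $G$ is nested, either $Z(\chi) \le Z(\psi)$ or $Z(\psi) \le Z(\chi)$; the second possibility, if strict, would force $\psi(1) > \chi(1)$ by the GVZ identity, and if it is an equality it coincides with the first. Hence $Z(\chi) \le Z(\psi)$, which is the definition of nested by degrees. For $(2)\Rightarrow(3)$, fix $\chi,\psi$ with $\psi(1) < \chi(1)$. Nested-by-degrees yields $Z(\chi) \le Z(\psi)$, and if equality held the GVZ identity would force $\chi(1) = \psi(1)$, a contradiction; so the inclusion is strict. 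The implication $(3)\Rightarrow(1)$ is immediate from the definitions, as strictly nested by degrees is a strengthening of nested by degrees, which in turn implies nested.

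For the furthermore clause, I would feed the equivalent condition (3) into Lemma \ref{fivea}, which says that a group strictly nested by degrees satisfies $r = n$ and $X_i = Y_i$ for all $0 \le i \le r$. To read off $d_i^2 = |G:X_i|$, pick any $\chi \in \irr G$ with $\chi(1) = d_i$; then $Z(\chi) = Y_i = X_i$ by the definition of the degree centers $Y_i$ and the identification just made, and the GVZ identity gives $d_i^2 = \chi(1)^2 = |G:Z(\chi)| = |G:X_i|$. The only step that requires any care is making sure strict versus non-strict inclusions are tracked correctly when passing between degrees and centers; beyond that there is no real obstacle, since the GVZ identity does all the work.
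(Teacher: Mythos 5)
Your proof is correct and uses essentially the same engine as the paper: the GVZ identity $\chi(1)^2 = |G:Z(\chi)|$, which the paper obtains by citing Corollary 2.30 of Isaacs and you re-derive from first orthogonality (the same computation), followed by the same cycle of implications. One caution on your preliminary ``dictionary'': as a blanket statement for an arbitrary GVZ group, the biconditional $Z(\chi) \le Z(\psi) \Leftrightarrow \chi(1) \ge \psi(1)$ is false in the direction from degrees to centers, since $\chi(1) \ge \psi(1)$ only yields $|Z(\chi)| \le |Z(\psi)|$ and not containment; Nenciu's examples of non-nested GVZ groups (cited in the paper) show that this direction genuinely fails without a nestedness hypothesis. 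Your argument survives because none of the three implications ever invokes that direction: in $(1)\Rightarrow(2)$ you get comparability from the nested hypothesis and only pass from a (strict) containment of centers to a (strict) inequality of degrees, and in $(2)\Rightarrow(3)$ likewise. The only structural difference from the paper is in the furthermore clause: the paper reads $r = n$ and $d_i^2 = |G:X_i|$ directly from condition (1), observing that every $Z(\psi)$ equals some $X_i$ and every $X_i$ is some $Z(\chi)$, so that $\cd G = \{ |G:X_i|^{1/2} \mid 0 \le i \le n \}$ with the $X_i$ strictly decreasing, whereas you route through condition (3) and Lemma \ref{fivea} to identify $X_i = Y_i$ and $r = n$. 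Both are valid; the paper's version is slightly more economical since it avoids the $Y_i$ machinery at that point, and it makes visible that the degree identity is established en route to proving $(1)\Rightarrow(2)$, exactly the dependency present in your write-up.
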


\begin{proof}
Observe that if $G$ is strictly nested by degrees, then $G$ is nested.  Thus, we have that (3) implies (1).  Suppose $G$ is a GVZ group and is nested.  For each $X_i$, there is a character $\chi \in \irr G$ so that $Z (\chi) = X_i$.  Also, we know that for every $\psi \in \irr G$ that there is an $i$ so that $Z(\psi) = X_i$.  Since $G$ is a GVZ-group, we know that $\psi$ vanishes on $G \setminus Z(\psi)$.  Applying Corollary 2.30 of \cite{text}, this implies that $\psi (1)^2 = |G: Z (\psi)| = |G:X_i|$.  We now have $\cd G = \{ |G:X_i|^{1/2} \mid i = 0, \dots, n \}$.  Since $1 = d_0 < d_1 < \dots < d_r$ are the distinct degrees in $\cd G$, we obtain $r = n$ and matching up the corresponding values, we obtain $|G:X_i| = d_i^2$.  This yields the conclusion about the degrees.

Suppose that $\chi, \psi \in \irr G$ where $\chi (1) = d_i$ and $\psi = d_j$.  If $\chi (1) \ge \psi (1)$, then $i \ge j$ and $Z (\chi) = X_i \le X_j = Z(\psi)$.  This implies that $G$ is nested by degrees.  This yields that (1) implies (2).  If $G$ is nested by degrees and we have characters $\chi$ and $\psi$ as above with $\chi (1) > \psi (1)$, then since $d_i^2 = |G:X_i|$ and $d_j^2 = |G:X_j|$, we have $Z(\chi) = X_i < X_j = Z(\psi)$.  We conclude that $G$ is strictly nested by degrees which proves (2) implies (3).
\end{proof}

Observe that if $G$ is a nested group, then since $G = X_0$, we have $G' = [X_0,G]$, and so if $G$ is a nested GVZ group, then $G' = [X_0,G] \le X_1$ by Lemma \ref{sixd}.  We now show that if $G$ is a nested GVZ-group and $G' = X_1$, then $G$ is in fact a semi-extraspecial group.

\begin{lemma}
If $G$ is a nested GVZ-group and $G = X_0 > X_1 > \dots > X_n \ge 1$ is the chain of centers of $G$, then either $G$ is a semi-extraspecial $p$-group for some prime $p$ or $G' < X_1$.
\end{lemma}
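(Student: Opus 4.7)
The plan is to show that $G' = X_1$ (the alternative to the disjunct $G' < X_1$) forces $G$ to be semi-extraspecial. By Proposition~1.2 of \cite{gvz}, $G$ is nilpotent, and Corollary~\ref{nine5} writes $G = P \times A$ with $P$ a nested $p$-group and $A$ abelian. Since $A \le Z(G) \le X_1$ while $G' = P'$ meets $A$ trivially, the equality $G' = X_1$ forces $A = 1$, so $G$ is itself a $p$-group.

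The main step is to show, by induction on $|G|$, that the nested length of $G$ is $n = 1$ (equivalently, $Z(G) = G' = X_1$ and $G$ has class $2$). Suppose for contradiction that $n \ge 2$, and pick $\chi \in \irr G$ with $Z(\chi) = X_2$; by Lemma~\ref{threeb}, $[G',G] = [X_1,G] \not\le \ker{\chi}$. Consider the quotient $\bar G = G/N$, taking $N = [X_2,G]$ when $n \ge 3$ (which is nontrivial by Corollary~\ref{threec}), and $N = \ker{\chi}$ when $n = 2$ with $\ker{\chi} > 1$ (which holds whenever $Z(G)$ is not cyclic of order $p$). A direct computation using Lemma~\ref{ten} shows that the chain of centers of $\bar G$ is exactly $\bar G > X_1/N > X_2/N$, of length $2$, with $\bar G' = X_1/N$ equal to the first term of that chain. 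By the inductive hypothesis, $\bar G$ would be semi-extraspecial, hence of class $2$; but Corollary~\ref{threec} gives $[\bar G',\bar G] = [G',G]N/N > 1$, forcing $\bar G$ to have class $\ge 3$, a contradiction. This disposes of all subcases except $n = 2$ with $Z(G)$ cyclic of order $p$.

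In this remaining subcase, Lemma~\ref{nine} makes $[G',G]$ elementary abelian; combined with $[G',G] \le Z(G)$ cyclic of order $p$, we get $[G',G] = Z(G)$. The quotient $G/Z(G)$ is then a class-$2$ nested GVZ group of nested length $1$ with derived equal to the first term, so by the $n = 1$ base case it is semi-extraspecial. Hence $G'/Z(G) = Z(G/Z(G))$ is elementary abelian and satisfies $|G'/Z(G)|^2 \le |G:G'|$. Writing $|G'/Z(G)| = p^a$ and $|G:G'| = p^{2c}$ gives $a \le c$. Taking a faithful $\chi$ with $Z(\chi) = Z(G)$ and computing $\chi_{G'}$ by Clifford theory (using the GVZ vanishing to obtain $\langle \chi_{G'},\chi_{G'}\rangle_{G'} = p^{2c}$, and then analyzing separately whether $G'$ is abelian or has $G'' = Z(G)$), the orbit-size and multiplicity equations force $a = 2c$, which together with $a \le c$ yields $c = 0$, contradicting that $G$ is nonabelian.

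Once $n = 1$ is established, semi-extraspeciality follows at once: for each $N$ maximal in $Z(G) = G'$, Lemma~\ref{ten} shows the chain of centers of $G/N$ still has length $1$, so $Z(G/N) = Z(G)/N$ has order $p$ and coincides with $(G/N)' = G'/N$, making $G/N$ extraspecial. The main obstacle is the last subcase of the induction: there the quotient trick fails because of the existence of a faithful character of $G$ with center $Z(G)$, so the Clifford-theoretic analysis of $\chi_{G'}$ must be combined with the semi-extraspeciality of $G/Z(G)$ to extract the final contradiction.
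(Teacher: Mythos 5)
Your reductions for $n \ge 3$ (passing to $G/[X_2,G]$, which by Lemma~\ref{ten} has chain of centers exactly $\bar G > X_1/N > X_2/N$ and $\bar G' = X_1/N$, then contradicting class $2$ via Corollary~\ref{threec}) and for $n = 2$ with a non-faithful witness are correct, but they only funnel everything into a residual case, and that case contains two genuine gaps. First, your case split is misstated. In a $p$-group every nontrivial normal subgroup meets the center; since $G$ is nilpotent, Lemma~\ref{sixd} gives $[X_1,G] \le X_2 = Z(G)$, and Lemma~\ref{nine} makes $[X_1,G]$ elementary abelian, so if $Z(G)$ is cyclic of \emph{any} order then $[X_1,G] = \Omega_1(Z(G))$ is the unique minimal normal subgroup of $G$. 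By Lemma~\ref{threeb}, any $\chi$ with $Z(\chi) = X_2$ has $[X_1,G] \not\le \ker{\chi}$, so $\ker{\chi} = 1$. Hence non-faithful witnesses fail exactly when $Z(G)$ is cyclic, not only when $|Z(G)| = p$; your residual analysis, which needs $|Z(G)| = p$ to conclude $[G',G] = Z(G)$ and to exploit $G/Z(G)$, leaves $Z(G) \cong C_{p^k}$, $k \ge 2$, uncovered (and this case is not cheaply vacuous: quotienting by $\Omega_1(Z(G))$ and applying your induction yields a semi-extraspecial quotient with no visible contradiction).

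Second, and more seriously, the decisive step ``the orbit-size and multiplicity equations force $a = 2c$'' does not follow from the equations you set up. With $\chi$ faithful and vanishing off $Z(G)$ one gets $\chi(1)^2 = |G:Z(G)| = p^{2c+a}$ and $\form {\chi_{G'}}{\chi_{G'}}{G'} = p^{2c} = e^2t$, whence $t\,\theta(1)^2 = p^a$; when $G'$ is abelian this is satisfied with $\theta$ linear, $t = p^a$, $e^2 = p^{2c-a}$ (so $\theta$ fully ramified in its stabilizer), consistently for any even $a \le c$ --- no contradiction appears, and you give no derivation that would produce one. Note what your residual case actually is: every nonlinear character has center contained in $X_1 = G'$ and vanishes off its center, hence off $G'$, so $(G,G')$ is a Camina pair and $G$ would be a Camina $p$-group of class exactly $3$ (since $[G',G] = [X_1,G] \ne 1$). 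Excluding precisely this configuration is the entire content of the lemma, and the paper's proof is completely different from yours: it is a two-sentence argument observing that all nonlinear characters vanish off $X_1$, so $X_1 = G'$ makes $G$ a Camina $p$-group by Proposition 3.1 of \cite{ChMc}, importing the semi-extraspecial conclusion from the structure theory of Camina $p$-groups rather than proving it by Clifford arithmetic. Your bare multiplicity bookkeeping cannot substitute for that input, so the proof as proposed does not close.
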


\begin{proof}
Since $G$ is a GVZ-group, every nonlinear irreducible character vanishes off of its center, and since $G$ is nested, the center of every nonlinear irreducible character is contained in $X_1$.  This implies that every nonlinear irreducible character of $G$ vanishes off of $X_1$.  If $X_1 = G'$, then this implies that $G$ is a Camina $p$-group by Proposition 3.1 of \cite{ChMc}.
\end{proof}

Any nilpotent group with nilpotence class $2$ will be a GVZ-group by Theorem 2.31 of \cite{text}.  Thus, any nested group with nilpotence class $2$ will be a nested GVZ-group.  We now show for $p$-groups that are nested by degrees that $Y_2 = X_2$.


\begin{lemma} \label{six}
If $G$ is a $p$-group that is nested by degrees with the chain of centers $G = X_0 > X_1 > \dots > X_n \ge 1$ and $G= Y_0 \ge Y_1 \ge \cdots \ge Y_r$ the degree centers where $r \ge 2$, then $Y_2 = X_2 < Y_1$.
\end{lemma}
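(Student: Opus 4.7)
The plan is to determine $Y_2$ by elimination. By items (3) and (4) of the previous lemma, $Y_1 = X_1$ and $Y_2 = X_i$ for some $i \le 2$; since $Y_2 \le Y_1 = X_1 < X_0 = G$, this forces $Y_2 \in \{X_1, X_2\}$, so it suffices to rule out the possibility $Y_2 = X_1 = Y_1$. Accordingly we will assume for contradiction that $Y_1 = Y_2 = X_1$ and derive a contradiction by passing to the quotient $H = G/[X_1, G]$.

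In $H$, Lemma \ref{threea} gives $Z(H) = X_1/[X_1, G]$. Since $G$ is a $p$-group it is nilpotent, so Lemma \ref{sixd} yields $G' = [X_0, G] \le X_1$; hence $H' \le Z(H)$, so $H$ has nilpotence class at most $2$ and is therefore a GVZ-group. Now let $\theta \in \irr H$ be nonlinear; regarded as a character in $\irr G$, it satisfies $[X_1, G] \le \ker \theta$, while $G' = [X_0, G] \not\le \ker \theta$ because $\theta$ is nonlinear. Lemma \ref{threeb} then gives $Z(\theta) = X_1$, so the center of $\theta$ computed inside $H$ is exactly $Z(H)$. Applying Corollary 2.30 of \cite{text}, which is valid for any character of a GVZ-group, we conclude $\theta(1)^2 = |H : Z(\theta)| = |H : Z(H)|$; hence every nonlinear irreducible character of $H$ has the same degree.

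Finally, the hypothesis $Y_1 = Y_2 = X_1$ supplies characters $\chi_1, \chi_2 \in \irr G$ with $\chi_k(1) = d_k$ and $Z(\chi_k) = X_1$; by Lemma \ref{twoa} both $\chi_k$ descend to nonlinear characters of $H$, and these have the distinct degrees $d_1 < d_2$, contradicting what was just established. This rules out $Y_2 = X_1$, so $Y_2 = X_2 < X_1 = Y_1$, as claimed.

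The main obstacle is identifying the correct quotient to work in. Once one passes to $H = G/[X_1, G]$ and recognizes it as a class-at-most-$2$ $p$-group in which every nonlinear irreducible character has center equal to $Z(H)$, the GVZ identity $\theta(1)^2 = |H : Z(\theta)|$ collapses all nonlinear degrees of $H$ to a single value, and the contradiction with the two distinct degrees $d_1$ and $d_2$ is automatic; the rest is routine application of Lemmas \ref{twoa}, \ref{threea}, \ref{threeb}, and \ref{sixd}.
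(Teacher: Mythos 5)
Your proof is correct, and its skeleton matches the paper's: both arguments reduce to the case $Y_2 = Y_1 = X_1$ (via the preceding lemma's items (3) and (4)), pass to the class-at-most-two quotient $H = G/[X_1,G]$, observe that $H$ is a GVZ-group, and note via Lemma \ref{twoa} that both $d_1$ and $d_2$ survive in $\cd{H}$. Where you genuinely diverge is in how the contradiction is extracted. The paper leans on machinery it has already built: $H$ is a nested GVZ-group, hence strictly nested by degrees by Lemma \ref{strict}, so the distinct degrees $d_1 < d_2$ in $\cd{H}$ force strictly decreasing degree centers, giving $Y_2 < Y_1$ against the assumption $Y_2 = Y_1$. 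You bypass Lemma \ref{strict} and Lemma \ref{fivea} entirely: using Lemma \ref{threeb} you show that every nonlinear $\theta \in \irr{H}$ has $Z(\theta) = X_1$, i.e., that $H$ is actually a VZ-group, whereupon Corollary 2.30 of \cite{text} collapses all nonlinear degrees of $H$ to the single value $|H : Z(H)|^{1/2}$, contradicting $d_1 \ne d_2$. The two endgames are dual applications of the same identity $\theta(1)^2 = |H : Z(\theta)|$: the paper converts distinct degrees into distinct centers, while you convert equal centers into equal degrees. Your route is more self-contained — it needs only Lemmas \ref{twoa}, \ref{threea}, \ref{threeb}, \ref{sixd} and Corollary 2.30, rather than the strictly-nested-by-degrees apparatus — and it yields the slightly stronger intermediate fact that $G/[X_1,G]$ is a VZ-group (compare Theorem \ref{sevena}(4), where the paper records this separately); the paper's version is shorter only because Lemma \ref{strict} was already available. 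All the supporting steps check out, including the use of nilpotence of the $p$-group $G$ through Lemma \ref{sixd} to get $G' \le X_1$, which is exactly where the hypothesis that $G$ is a $p$-group enters (and, as the paper's ${\rm SL}_2(3)$ example shows, cannot be dropped).
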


\begin{proof}
Suppose $Y_2 \ne X_2$.  This implies that $Y_2 = X_1 = Y_1$.  Then $G/[Y_2,G] = G/[Y_1,G]$, and since $G' \le Y_1$, we see that $G/[Y_1,G]$ has nilpotence class $2$.  Thus, $G/[Y_1,G]$ is a nested GVZ-group, and so, in view of Lemma \ref{strict}, $G/[Y_1,G]$ is strictly nested by degrees.  As we saw earlier, we may apply Lemma \ref{twoa} to see that $d_1, d_2 \in \cd {G/[Y_2,G]}$.  We deduce from Lemma \ref{fivea} that $Y_2 < Y_1$ which is a contradiction.
\end{proof}

We note that if $G$ is not nilpotent, then Lemma \ref{six} is not necessarily true.  Let $G = {\rm SL}_2 (3)$.  It is not difficult to see that $G$ is nested by degrees.  We know that $\cd G = \{ 1, 2, 3 \}$ and $Y_1 = Y_2 = X_1 = Z(G)$ and $n = 1$ so we do not have $Y_2$ equal to $X_2$.

When $G$ is a $p$-group that nested by degrees but is not a GVZ-group, it is not clear that we must have $Y_{i+1} < Y_i$ for $i \ge 3$.  However, we have seen that nested GVZ-groups do satisfy this condition.

We can apply our results about nested GVZ-groups to obtain results about nonabelian nested $p$-groups.  We say a group $G$ is a {\it VZ-group} if every irreducible character of $G$ vanishes on $G \setminus Z(G)$.  We first used the term VZ-groups in \cite{VZ} although these groups had been studied earlier in \cite{FAMo} by Fern\'andez-Alcobar and Moret\'o.  In particular, Fern\'andez-Alcobar and Moret\'o have proved that $G$ is a VZ-group if and only if $G$ is isoclinic to a semi-extraspecial group.  We note that Nenciu's definition of GVZ-groups in \cite{gvz} was a generalization of VZ-groups.  Observe that this next theorem includes conclusions (1), (2), and (3) of Theorem \ref{main5}.

\begin{theorem}\label{sevena}
If $G$ is a nonabelian nested $p$-group with chain of centers $G = X_0 > X_1 > \dots X_n > 1$ and $d_i^2 = |G:X_i|$ for $i = 0, \dots n$, then
\begin{enumerate}
\item $|G:X_1|$ is a square and $\cd {G/[X_1,G] \mid G'/[X_1,G]} = \{ d_1 \}$.
\item If $G' \le X_i$ for some $i = 1, \dots, n$, then $|G:X_j|$ is a square and $\cd {G/[X_j,G] \mid [X_{j-1},G]/[X_j,G]} = \{ d_j \}$ for all $1 \le j \le i$.
\item If the subgroup $N$ satisfies $[X_1,G] \le N < NG'$, then $G/N$ is a $VZ$-group with center $X_1/N$.  Furthermore, if $NG' = X_1$, then $G/N$ is a semi-extraspecial group.
\item $G/[X_1,G]$ is a VZ-group, so $|G':[X_1,G]| \le d_1$.
\end{enumerate}
\end{theorem}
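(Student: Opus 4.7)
The hypothesis $d_i^2 = |G:X_i|$, combined with Corollary 2.30 of \cite{text}, forces $G$ to be a GVZ-group: any $\chi \in \irr G$ with $Z(\chi) = X_i$ has $\chi(1)^2 = |G:Z(\chi)|$ and therefore vanishes on $G \setminus Z(\chi)$. With this recognition, conclusions (1) and (2) reduce to applications of Lemma \ref{threeb}. A character $\chi \in \irr {G/[X_j,G] \mid [X_{j-1},G]/[X_j,G]}$ is, by definition, one with $[X_j,G] \le \ker {\chi}$ and $[X_{j-1},G] \not\le \ker {\chi}$; Lemma \ref{threeb} then forces $Z(\chi) = X_j$, so $\chi(1) = d_j$. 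Taking $j = 1$ yields the degree-set claim of (1), and for $j \le i$ under the hypothesis $G' \le X_i$ this yields that of (2). The ``is a square'' assertions are immediate from $d_j^2 = |G:X_j|$.

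For (3), assume $[X_1,G] \le N < NG'$. Since $[X_1,G] \le N$ and $G' = [X_0,G] \not\le N$, Lemma \ref{sixa} (with $i = 1$) gives $Z(G/N) = X_1/N$. For any nonlinear $\chi \in \irr {G/N}$, we have $[X_1,G] \le \ker {\chi}$ and $G' \not\le \ker {\chi}$, so Lemma \ref{threeb} delivers $Z(\chi) = X_1$. Since $G$ is a GVZ-group, $\chi$ vanishes off $X_1$, and therefore off $X_1/N = Z(G/N)$ when viewed as a character of $G/N$. Thus $G/N$ is a VZ-group with center $X_1/N$. When $NG' = X_1$, the semi-extraspecial property is checked as follows: for any $M$ with $N \le M \le X_1$ and $|X_1:M| = p$, we have $[X_1,G] \le M$ and $G' \not\le M$ (else $X_1 = NG' \le M$), so Lemma \ref{sixa} yields $Z(G/M) = X_1/M$ of order $p$. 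Moreover $G'M \supseteq G'N = X_1$ while $G'M \le X_1$, so $(G/M)' = X_1/M$ also has order $p$. Thus $G/M$ is extraspecial.

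For (4), I apply (3) with $N = [X_1,G]$: since $[X_1,G] \le G'$, we have $NG' = G'$, and $[X_1,G] < G'$ by Corollary \ref{threec} (as $G$ nonabelian forces $X_1 < X_0 = G$), so $N < NG'$. Hence $G/[X_1,G]$ is a VZ-group with center $X_1/[X_1,G]$. For the bound $|G':[X_1,G]| \le d_1$, I invoke the characterization of Fern\'andez-Alcobar and Moret\'o from \cite{FAMo}: every VZ-group is isoclinic to a semi-extraspecial $p$-group $S$, yielding $|G':[X_1,G]| = |S'|$ and $|G:X_1| = |S:Z(S)| = d_1^2$. The standard bound $|S'| \le \sqrt{|S:Z(S)|}$ for semi-extraspecial groups---a consequence of the fact that the commutator pairing $S/Z(S) \times S/Z(S) \to S'$ remains non-degenerate after composition with any nonzero functional $S' \to \mathbb{F}_p$---then delivers $|G':[X_1,G]| \le d_1$.

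The main obstacle is the inequality in (4): conclusions (1)--(3) are near-mechanical applications of Lemmas \ref{threeb} and \ref{sixa} once the GVZ property is noted, but bounding $|G':[X_1,G]|$ by $d_1$ requires the nontrivial isoclinism characterization of \cite{FAMo} together with the symplectic-geometry bound for semi-extraspecial $p$-groups.
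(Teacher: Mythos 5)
Your proposal has a genuine gap at its foundation: the claim that the hypothesis $d_i^2 = |G:X_i|$ forces $G$ to be a GVZ-group. That equation is merely notation defining $d_i = |G:X_i|^{1/2}$ (indeed, conclusion (1) itself asserts that $|G:X_1|$ is a square, so integrality is part of what must be proved); it says nothing about character degrees. Corollary 2.30 of \cite{text} gives only the inequality $\chi(1)^2 \le |G:Z(\chi)|$, with equality if and only if $\chi$ vanishes on $G \setminus Z(\chi)$, so to run your argument you would need to already know that every $\chi \in \irr G$ with $Z(\chi) = X_i$ has $\chi(1) = d_i$ --- which is exactly the GVZ property you are trying to extract, making the argument circular. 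Moreover the claim is false: the theorem is asserted for all nonabelian nested $p$-groups, and the paper exhibits ${\rm SmallGroup}(64,258)$ as a nested group that is not a GVZ-group. Every later step that invokes ``since $G$ is a GVZ-group'' --- the degree identifications in (1) and (2) and the vanishing argument in (3) --- inherits this gap.

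The repair, and the route the paper takes, is to obtain the GVZ property on quotients rather than on $G$ itself. Since $G$ is a nested $p$-group it is nilpotent, so $G' = [X_0,G] \le X_1$ by Lemma \ref{sixd}; and if $G' \le X_i$, then $G' \le X_j$ for all $j \le i$, so Lemma \ref{threea} gives $G'[X_j,G]/[X_j,G] \le X_j/[X_j,G] = Z(G/[X_j,G])$, whence $G/[X_j,G]$ has nilpotence class at most $2$ and is therefore a GVZ-group (Theorem 2.31 of \cite{text}). Applying Lemma \ref{strict} to this quotient yields both that $|G:X_j|$ is a square and that characters with center $X_j$ --- which factor through $G/[X_j,G]$ by Lemma \ref{twoa} --- have degree $d_j$. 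Your identification of $\irr {G/[X_j,G] \mid [X_{j-1},G]/[X_j,G]}$ via Lemma \ref{threeb} is correct, but you should also verify the set is nonempty (Lemma \ref{ten} together with Corollary \ref{threec}) so that the asserted equality $\{ d_j \}$, rather than mere containment, holds. With the same quotient-level fix, your argument for (3) goes through: Lemma \ref{sixa} gives $Z(G/N) = X_1/N$, the class-$2$ (hence GVZ) property of $G/N$ gives the vanishing, and your direct extraspecial verification is a sound alternative to the paper's citation of the known fact about VZ-groups with center equal to derived subgroup. Likewise your isoclinism route to $|G':[X_1,G]| \le d_1$ in (4) is a legitimate variant of the paper's appeal to Lemma 2.4 of \cite{GCG}. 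As written, however, your proof establishes the theorem only for nested GVZ $p$-groups, a strictly smaller class than the theorem covers.
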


\begin{proof}
By Lemma \ref{threea}, $X_i/[X_i,G] = Z(G/[X_i,G])$.  Thus, $G' \le X_i$ implies that $G/[X_i,G]$ has nilpotence class $2$, so $G/[X_i,G]$ is a GVZ-group.  By Lemma \ref{strict}, $|G:X_j|$ is a square and $d_j = |G:X_j|^{1/2}$ for $1 \le j \le i$, and $G/[X_i,G]$ is strictly nested by degrees which gives the first part of Conclusion (2).  The last part of Conclusion (2) follows from Lemma \ref{seven} (4).  Conclusion (1) follows from Conclusion (2) with the observation that $G' = [X_0,G] \le X_1$.  To prove Conclusion (3), observe that $G' \not\le N$ and $[X_1,G]$, so $X_1/N = Z(G/N)$ by Lemma \ref{sixa}.  Since $G/N$ is a GVZ-group, every nonlinear irreducible character of $G/N$ vanishes off of $X_1/N$.  By definition of VZ-group, since $X_1/N = Z(G/N)$ this implies that $G/N$ is a VZ-group.  If $X_1 = NG'$, then $Z(G/N) = (G/N)'$, and it is known that a VZ-group with center and derived subgroup equal must be semi-extraspecial.  The first part of Conclusion (4) follows from Conclusion (3) since $[X_1,G] < [X_0,G] = G'$ by Corollary \ref{threec}. The second part of Conclusion (4) follows from Lemma 2.4 of \cite{GCG}.
\end{proof}

We now show that when $G$ is a nested $p$-group with nilpotence class $2$, then we can bound the size of the derived subgroup of $G$ in terms of the indices $|G:X_i|$ for $i = 1, \dots, n$.  This should be thought of as a generalization of the result for semi-extraspecial groups and VZ-groups that the size of the derived subgroup is at most the square root of the index of the center.

\begin{theorem} \label{bound}
Let $G$ be a nonabelian nested $p$-group with chain of centers $G = X_0 > X_1 > \dots X_n > 1$ and let $d_i^2 = |G:X_i|$ for $i = 0, \dots, n$.  If $G' \le X_n$, then $|[X_{i-1},G]:[X_i:G]| \le d_i$ for all $i = 1,\dots, n$.  In particular, $|G'| \le d_1 \cdots d_n$.
\end{theorem}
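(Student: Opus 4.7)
The \emph{in particular} clause follows by telescoping: since $[X_n,G]=[Z(G),G]=1$, we have $|G'|=\prod_{i=1}^n |[X_{i-1},G]:[X_i,G]|$, so it suffices to prove each individual bound $|[X_{i-1},G]:[X_i,G]| \le d_i$.

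Fix $i \in \{1,\dots,n\}$. My first step is to observe that $G'/[X_i,G]$ is elementary abelian, which follows from Lemma~\ref{nine2} applied with indices $1$ and $i$ (its hypothesis $[X_0,G] = G' \le X_i$ holds because $G' \le X_n \le X_i$ in the class-$2$ setting). Viewing the $\mathbb{F}_p$-vector space $G'/[X_i,G]$ additively, I choose a complement $N_i/[X_i,G]$ of the subspace $[X_{i-1},G]/[X_i,G]$. The lift $N_i$ then satisfies $[X_i,G] \le N_i \le G'$, $N_i \cdot [X_{i-1},G] = G'$, and $N_i \cap [X_{i-1},G] = [X_i,G]$, and $N_i$ is normal in $G$ since it lies in $Z(G)$.

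Next, Lemma~\ref{ten} lets me verify that $\bar G := G/N_i$ has nested length exactly $1$ with $Z(\bar G) = X_i/N_i$. For $1 \le k \le i-1$, both $[X_{k-1},G]$ and $[X_k,G]$ contain $[X_{i-1},G]$, so $N_i[X_{k-1},G] = N_i[X_k,G] = G'$ and $X_k/N_i$ does not appear. For $k = i$, $N_i[X_i,G] = N_i < G' = N_i[X_{i-1},G]$, so $X_i/N_i$ does appear. For $k \ge i+1$, both commutator terms lie in $[X_i,G] \le N_i$, so no new term appears. The chain of centers of $\bar G$ is therefore $\bar G > X_i/N_i$.

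By Corollary~\ref{threec}, $[X_{i-1},G]/[X_i,G] \ne 1$, so $\bar G' = G'/N_i \ne 1$ and $\bar G$ is nonabelian. Applying Theorem~\ref{sevena}(4) to $\bar G$---noting that $[\bar X_1,\bar G] = [X_i,G]N_i/N_i = 1$---I conclude $\bar G$ is a VZ-group and $|\bar G'| \le |\bar G : X_i/N_i|^{1/2} = |G:X_i|^{1/2} = d_i$. Finally, the second isomorphism theorem applied to the decomposition $G' = N_i \cdot [X_{i-1},G]$ with intersection $[X_i,G]$ yields $\bar G' = G'/N_i \cong [X_{i-1},G]/[X_i,G]$, so $|[X_{i-1},G]:[X_i,G]| \le d_i$. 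The essential obstacle is recognizing that the elementary abelian structure of $G'/[X_i,G]$ is precisely what makes it possible to isolate the single step $[X_{i-1},G]/[X_i,G]$ as the derived subgroup of a length-$1$ VZ quotient of $G$, so that the known VZ bound of Theorem~\ref{sevena}(4) can be invoked.
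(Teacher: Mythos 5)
Your proof is correct and takes essentially the same route as the paper's: both arguments use $G' \le X_n = Z(G)$ to get class $2$ and an elementary abelian derived subgroup, choose a complement $N$ with $G' = [X_{i-1},G]N$ and $N \cap [X_{i-1},G] = [X_i,G]$, show $G/N$ is a VZ-group with center $X_i/N$, and extract $|[X_{i-1},G]:[X_i,G]| \le d_i$ from the VZ inequality before telescoping. The only cosmetic difference is plumbing: you establish the VZ property by computing the chain of centers of $G/N$ via Lemma \ref{ten} and then quoting Theorem \ref{sevena}(4), whereas the paper argues directly from Lemma \ref{sixa} and Lemma \ref{threeb} together with the GVZ property of $G$.
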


\begin{proof}
We have by Lemma \ref{zero} that $X_n = Z(G)$ so $G$ has nilpotence class $2$.  Hence, $G$ is a GVZ group, and applying Theorem \ref{sevena}, we have $|\cd {G/[X_i,G] \mid G/[X_{i-1},G]}| = 1$ for all $i = 1, \dots, n$.  By Corollary \ref{nine4}, we see that $G'$ is an elementary abelian $p$-group for some prime $p$.  Thus, we can find a subgroup $Y$ of $G'$ so that $G' = [X_{i-1},G] Y$ and $[X_{i-1},G] \cap Y = [X_i,G]$.  By Lemma \ref{sixa}, we have $Z(G/Y) = X_i/Y$.  If $\chi \in \irr {G/Y}$, then $[X_i,G] \le Y \le \ker {\chi}$ and if $[X_{i-1},G] \le \ker {\chi}$, then $G' = [X_{i-1},G]Y \le \ker {\chi}$.  Applying Lemma \ref{threeb}, we have that either $\chi$ is linear or $Z (\chi) = X_i$.  Since $G$ is a GVZ group, it follows that every nonlinear irreducible character of $G/Y$ vanishes off of the center of $G/Y$, and thus, $G/Y$ is a VZ-group.  This implies that $|G:X_i| \ge |G':Y|^2$.  Since $|G:X_i| = d_i^2$, we have $d_i \ge |G':Y| = |Y[X_{i-1},G]:Y| = |[X_{i-1},G]:Y \cap [X_{i-1},G]| = |[X_{i-1},G]:[X_i,G]|$.  Notice that $|G'| = |[X_0,G]:[X_1,G]| \cdots |[X_{n-1},G]:[X_n,G]| \le d_1 \cdots d_n$.
\end{proof}

We also have the following amusing result for a nested $p$-group $G$ with nilpotence class $2$: every subset of $\cd G$ containing $1$ occurs as the set of character degrees for some quotient.

\begin{lemma}
Let $G$ be a nonabelian $p$-group and assume $G' \le Z(G)$.  If $S \subseteq \cd G \setminus \{ 1 \}$, then there exists a subgroup $N \le G'$ so that $\cd {G/N} = \{ 1 \} \cup S$.
\end{lemma}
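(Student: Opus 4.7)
The plan is to use the class-$2$ structure of $G$ to derive a Clifford-theoretic degree formula for $\cd{G/N}$, translate it via the chain of centers into a linear-algebra question about subspaces of $G'$, and finally construct $N$ as a suitable subspace adapted to the commutator filtration of $G'$. First, since $G$ has class $2$, $G$ is a GVZ-group and $\chi(1)^2 = |G : Z(\chi)|$ for every $\chi \in \irr G$. Because $G' \le Z(G)$, each $\lambda \in \irr{G'}$ is $G$-invariant, and an analysis of the symplectic bicharacter $\lambda \circ [\cdot,\cdot]$ on $G/Z(G)$ shows that every $\chi \in \irr G$ lying over $\lambda$ satisfies $Z(\chi) = R_\lambda$, hence $\chi(1) = |G : R_\lambda|^{1/2}$, where $R_K := \{ g \in G : [g, G] \le K \}$ depends only on $K := \ker \lambda$. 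Consequently, for any normal $N \le G'$,
$$
\cd{G/N} = \{1\} \cup \bigl\{ |G : R_K|^{1/2} : N \le K < G',\ G'/K \text{ cyclic} \bigr\}.
$$

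Next, letting $G = X_0 > X_1 > \cdots > X_n = Z(G)$ denote the chain of centers and $L_i := [X_i, G]$, Corollary \ref{threec} gives $G' = L_0 > L_1 > \cdots > L_n = 1$, and Lemma \ref{sixa} identifies $R_K = X_i$ whenever $L_i \le K$ and $L_{i-1} \not\le K$, so that the contributed degree is $d_i := |G : X_i|^{1/2}$. Using that $G'$ is elementary abelian (Corollary \ref{nine4}), the proper subgroups $K$ with $G'/K$ cyclic are precisely the hyperplanes of the $\mathbb{F}_p$-space $G'$, and the existence of a hyperplane containing $NL_i$ but not $L_{i-1}$ is equivalent to $L_{i-1}$ having nonzero image in $G'/(NL_i)$. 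This yields the key equivalence
$$
d_i \in \cd{G/N} \iff L_{i-1} \not\subseteq N L_i.
$$

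It remains to construct $N$, which is now purely combinatorial. Fix a basis $\mathcal{B} = \mathcal{B}_1 \sqcup \cdots \sqcup \mathcal{B}_n$ of $G'$ adapted to the filtration: each $\mathcal{B}_i \subseteq L_{i-1}$ projects to a basis of $L_{i-1}/L_i$, so $L_i = \text{span}(\mathcal{B}_{i+1} \cup \cdots \cup \mathcal{B}_n)$. Set $T := \{ i : d_i \notin S \}$ and define $N := \text{span}\bigl( \bigcup_{i \in T} \mathcal{B}_i \bigr)$. For $i \in T$, the inclusion $\mathcal{B}_i \subseteq N$ forces $L_{i-1} \le N + L_i$, eliminating $d_i$; for $i \notin T$, the linear independence of $\mathcal{B}$ ensures $\mathcal{B}_i \not\subseteq N + L_i$, so $L_{i-1} \not\le N + L_i$ and $d_i$ is retained. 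Thus $\cd{G/N} = \{1\} \cup S$. The main obstacle to doing this carefully is establishing the degree formula $\chi(1) = |G : R_\lambda|^{1/2}$, which requires identifying $R_\lambda/Z(G)$ with the radical of the symplectic bicharacter induced by $\lambda$ on $G/Z(G)$ and applying the standard dimension count for faithful characters of $G/\ker\lambda$ on its cyclic derived subgroup; once this is in hand, the rest reduces to transparent linear algebra in $G'$.
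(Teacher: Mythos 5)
Your proposal is correct, but it takes a genuinely different route from the paper. The paper proves the lemma by induction on $|G|$: at each step it removes a single unwanted degree $d_i$ by choosing a complement $Y$ with $[X_{i-1},G] = [X_i,G]Y$ and $[X_i,G] \cap Y = [X_{i+1},G]$, verifying via Lemma \ref{ten} that exactly the centers $X_j$ with $j \ne i$ survive in $G/Y$, and then passing to $G/Y$ and iterating. You instead prove a complete closed-form description of $\cd {G/N}$ for \emph{every} $N \le G'$ at once: your Clifford-theoretic reduction to hyperplanes of the elementary abelian group $G'$ yields the criterion $d_i \in \cd{G/N}$ if and only if $[X_{i-1},G] \not\le N[X_i,G]$, which is in substance the paper's Lemma \ref{ten} (specialized to class $2$ and combined with the GVZ degree formula $\chi(1)^2 = |G:Z(\chi)|$) rederived from scratch; you then construct $N$ in one shot from a basis of $G'$ adapted to the filtration $[X_0,G] > [X_1,G] > \cdots > [X_n,G] = 1$. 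Your version is non-inductive and buys more: it characterizes which degree sets arise from an \emph{arbitrary} $N \le G'$, whereas the paper's induction is shorter because it can quote Lemma \ref{ten} directly rather than reproving it via symplectic bicharacters (indeed, for class $2$ your identification $Z(\chi) = R_{\ker\lambda}$ follows in one line from $[g,G] \le G'$ and $Z(\chi)/\ker\chi = Z(G/\ker\chi)$, so citing Lemma \ref{ten} would streamline your argument). One shared caveat: like the paper's own proof, you invoke the chain of centers and Corollaries \ref{threec} and \ref{nine4}, so both arguments tacitly use the hypothesis that $G$ is \emph{nested}, which appears in the paper's surrounding discussion but not in the lemma's formal statement; this hypothesis is genuinely needed, since for example the direct product of extraspecial groups of orders $p^3$ and $p^5$ has $\cd G = \{1, p, p^2, p^3\}$ but no $N \le G'$ with $\cd {G/N} = \{1, p, p^2\}$.
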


\begin{proof}
We work by induction on $|G|$.  Write $G = X_0 > X_1 > \dots X_n > 1$ for the chain of centers of $G$, and let $d_i^2 = |G:X_i|$ for $i = 0, \dots, n$.  Because $G$ has nilpotence class 2, we know that $G$ is a GVZ group.  By Lemma \ref{strict}, we know that $\cd G = \{ 1, d_1, \dots, d_n \}$. If $S = \{ d_1, \dots, d_n \}$, then take $N = 1$, and the result holds.  Thus, we may assume that $S$ is a proper subset of $\{ d_1, \dots, d_n \}$.  Hence, we can find $d_i \in \{ d_1, \dots, d_n \} \setminus S$.  {}From Corollary \ref{nine4}, we know that $G'$ is elementary abelian.  If $i = n$, then take $Y = [X_{n-1},G]$ and observe that $\cd {G/Y} = \{ 1, d_1, \dots, d_{n-1} \}$.  Thus, we can suppose $i < n$, and we can find a subgroup $Y$ such that $[X_{i-1},G] = [X_i,G]Y$ and $[X_i,G] \cap Y = [X_{i+1},G]$.  Suppose $j$ is an integer so that $1 \le j \le i-1$.  Then $[X_{j-1},G]Y = [X_{j-1},G] > [X_j,G] = [X_j,G]Y$.  On the other hand, if $j$ is an integer so that $i+1 \le j \le n$, then $|[X_{j-1},G] Y:[X_j,G]Y| = |[X_{j-1},G]:[X_j,G] \cap Y| = |[X_{j-1},G]:[X_j,G]| > 1$.  Applying Lemma \ref{ten}, there exists a character $\chi \in \irr {G/Y}$ with $Z(\chi) = X_j$ for all integers $j$ with either $1 \le j \le i-1$ or $i+1 \le j \le n$.  Since $G$ is a GVZ group, this implies that $\cd {G/Y} = \{ 1, d_1, \dots, d_{i-1}, d_{i+1}, \dots, d_n \}$.  In either case, we have $S \subseteq \cd {G/Y} \setminus \{ 1 \}$ and $|G/Y| < |G|$.  By the induction hypothesis, we can find $N$ so that $Y \le N \le G'$ and $\cd {(G/Y)/(N/Y)} = 1 \cup S$.  Since $G/N \cong (G/Y)/(N/Y)$, the result follows.
\end{proof}

We now work to prove conclusion (4) of Theorem \ref{main5} which is included in Theorem \ref{sevenc}.  Recall that a group $G$ is said to be {\it capable} if there exists a group $\Gamma$ so that $G \cong \Gamma/Z(\Gamma)$.  To prove Theorem \ref{sevenc}, we need to know when a VZ-group is capable.  In Theorem 4 of \cite{mann}, Mann proved that if $G$ is a capable semi-extraspecial group, then $|G:Z(G)| = |G:G'| = |G'|^2$.  From Theorem A of \cite{FAMo} and Lemma 2.2 of \cite{GCG}, we know that a group $G$ is a VZ-group if and only if $G$ is isoclinic to a semi-extraspecial group.  We now see that Mann's result can be extended to capable VZ-groups.  In our proof of this lemma we need  Theorem B of \cite{FAMo} which states that if $G$ is a $p$-group where $|G:Z(G)|$ is a square, then every normal subgroup of $G$ either contains $G'$ or is contained in $Z (G)$ if and only if $G$ is a VZ-group.

\begin{lemma} \label{sevenb}
Let $G$ be a $p$-group that is a capable VZ-group.  Then $|G:Z(G)| = |G'|^2$.
\end{lemma}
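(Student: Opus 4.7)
The plan is to reduce the problem to Mann's theorem (Theorem~4 of \cite{mann}) on capable semi-extraspecial $p$-groups by working in the isoclinism class of $G$. Recall that, by Theorem~A of \cite{FAMo} together with Lemma~2.2 of \cite{GCG} (cited in the paragraph before the lemma), a finite group is a VZ-group precisely when it is isoclinic to a semi-extraspecial group. So let $H$ be a semi-extraspecial $p$-group isoclinic to $G$. Isoclinism yields isomorphisms $G/Z(G) \cong H/Z(H)$ and $G' \cong H'$ that intertwine the commutator maps. In particular $|G:Z(G)| = |H:Z(H)|$ and $|G'| = |H'|$, so it suffices to prove $|H:Z(H)| = |H'|^2$.

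The key step is to transfer capability from $G$ to $H$. For this I would invoke the classical theorem (essentially due to P.~Hall; see also Beyl, Felgner, and Schmid) that capability is an invariant of the isoclinism class of a finite group. Once we know $H$ is capable, Theorem~4 of \cite{mann} applies directly to the capable semi-extraspecial $H$ and yields $|H:Z(H)| = |H'|^2$; combining this with the cardinality identifications above then gives $|G:Z(G)| = |G'|^2$, as required.

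The main obstacle is the invariance of capability under isoclinism. If that result is unavailable as a black box, one must argue more directly: given a witness $\Gamma$ with $G \cong \Gamma/Z(\Gamma)$, one would modify $\Gamma$ within its isoclinism class so that its central quotient becomes semi-extraspecial, producing the capable semi-extraspecial group $H$ isoclinic to $G$ by hand. The role of Theorem~B of \cite{FAMo}, cited just before the lemma, would be to control which normal subgroups of $G$ can arise as images of $Z(\Gamma)$-style subgroups during this modification, since Theorem~B says the normal subgroups of a VZ-group split cleanly into those containing $G'$ and those contained in $Z(G)$. In either formulation, Mann's theorem applied to the resulting semi-extraspecial group closes out the argument.
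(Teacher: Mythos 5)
There is a genuine gap, and it sits exactly where you flagged the ``main obstacle'': capability is \emph{not} an invariant of the isoclinism class, and no theorem of P.~Hall or of Beyl--Felgner--Schmid says that it is. All nontrivial abelian groups are isoclinic to the trivial group, yet $C_p$ is not capable while $C_p \times C_p$ is; even at fixed order, $D_8$ and $Q_8$ are isoclinic extraspecial groups of order $8$, and $D_8$ is capable (it is $D_{16}/Z(D_{16})$) while $Q_8$ is classically known not to be a central quotient of any group. This last example is fatal in precisely the direction you need: both $D_8$ and $Q_8$ are semi-extraspecial groups isoclinic to the capable VZ-group $D_8$, so an arbitrary semi-extraspecial $H$ isoclinic to your capable $G$ can fail to be capable, and Mann's Theorem~4 of \cite{mann} then does not apply to it. Your fallback --- modifying a witness $\Gamma$ with $\Gamma/Z(\Gamma) \cong G$ within its isoclinism class so that its central quotient becomes semi-extraspecial --- is exactly the nontrivial content that would have to be proved, and the proposal does not carry it out; gesturing at Theorem~B of \cite{FAMo} to ``control'' subgroups is not an argument. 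So the proof as written establishes nothing beyond the (correct) identifications $|G:Z(G)| = |H:Z(H)|$ and $|G'| = |H'|$.

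The paper avoids the transfer problem entirely by proving two inequalities directly for $G$. The lower bound $|G'|^2 \le |G:Z(G)|$ holds for any VZ-group by Lemma~2.4 of \cite{GCG}. For the reverse inequality, the paper checks via Theorem~B of \cite{FAMo} that a VZ-group has $|G:Z(G)|$ a square and every normal subgroup either containing $G'$ or contained in $Z(G)$, and then invokes Lemma~6.4 of \cite{FAMo}, which says that a \emph{capable} group with these two properties satisfies $|G'|^2 \ge |G:Z(G)|$. Thus capability enters only through Lemma~6.4 of \cite{FAMo}, applied to $G$ itself, with no isoclinism transfer; that lemma is, in effect, the correct substitute for the false invariance statement your argument relies on. If you want to rescue your route, you would have to prove that some capable semi-extraspecial group exists in the isoclinism class of $G$, which amounts to redoing the work that Lemma~6.4 of \cite{FAMo} already encapsulates.
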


\begin{proof}
Since $G$ is a VZ-group, we may use Lemma 2.4 of \cite{GCG} to see that $|G'|^2 \le |G:Z(G)|$.  Using the observation that if $G$ is a VZ-group, then $|G:Z (G)|$ is a square and every normal subgroup of $G$ either contains $G'$ or is contained in $Z(G)$, we may use Lemma 6.4 of \cite{FAMo} which states that a group satisfying this condition that is capable must have $|G'|^2 \ge |G:Z(G)|$.  Combining our two inequalities, we obtain the desired equality.
\end{proof}

We now obtain a result regarding certain nested $p$-groups of nilpotence class larger than $2$.  The following theorem includes Theorem \ref{main5} (4).

\begin{theorem} \label{sevenc}
Suppose $G$ is a nested $p$-group with chain of centers $G = X_0 > X_1 > \dots X_n > 1$ and $n \ge 2$.  If $X_2 < X_2G'$, then the following hold:
\begin{enumerate}
\item $|G:X_1| = |X_2G':X_2|^2 = |G':[X_1,G]|^2$ and $X_2 \cap G' = [X_1,G]$.
\item There exists a subgroup $A$ with $X_2 \le A < X_1$ so that $G/A$ is ultraspecial.
\end{enumerate}
\end{theorem}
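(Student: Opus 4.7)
The plan is to prove (1) by showing that $G/X_2$ is a capable $VZ$-group, then applying Lemma \ref{sevenb} together with the upper bound from Theorem \ref{sevena}(4); part (2) will follow by taking $A$ to be the preimage of a complement inside the elementary abelian quotient $X_1/X_2$.

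For (1), since $G$ is a nilpotent $p$-group, Lemma \ref{sixd} gives $G' = [X_0,G] \le X_1$, while Lemma \ref{sixa} applied with $N = X_2$ (using $[X_1,G] \le X_2$ and $G' \not\le X_2$) gives $Z(G/X_2) = X_1/X_2$. Hence $(G/X_2)' \le Z(G/X_2)$, so $G/X_2$ has nilpotence class $2$ and is a $GVZ$-group. I then use Lemma \ref{ten} with $N = X_2$ to identify the chain of centers of $G/X_2$: for $i \ge 2$ both $X_2[X_i,G]$ and $X_2[X_{i-1},G]$ equal $X_2$, whereas for $i = 1$ the hypothesis $X_2 < X_2G'$ gives $X_2[X_1,G] = X_2 < X_2G' = X_2[X_0,G]$. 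Thus the chain of centers of $G/X_2$ is just $G/X_2 > X_1/X_2$, so every nonlinear irreducible character of $G/X_2$ has character-center $Z(G/X_2)$, and the $GVZ$-property upgrades to the $VZ$-property. By Lemma \ref{threea}, $Z(G/[X_2,G]) = X_2/[X_2,G]$, so $G/X_2$ is the central quotient of $G/[X_2,G]$ and is therefore capable. Applying Lemma \ref{sevenb} to the capable $VZ$ $p$-group $G/X_2$ yields
\[ |G:X_1| = |(G/X_2)'|^2 = |X_2G':X_2|^2 = |G':G' \cap X_2|^2. \]
Combining this with $[X_1,G] \le X_2 \cap G'$ and the bound $|G':[X_1,G]| \le |G:X_1|^{1/2}$ from Theorem \ref{sevena}(4) forces the equalities $X_2 \cap G' = [X_1,G]$ and $|G':[X_1,G]| = |X_2G':X_2|$, which is (1).

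For (2), Theorem \ref{main4}(1)(a) with $i = 2$ shows $X_1/X_2$ is elementary abelian, so I view it as an $\mathbb F_p$-vector space. The hypothesis $X_2 < X_2G'$ means the subspace $X_2G'/X_2$ is nonzero, so it admits a complement $W$ with $W < X_1/X_2$; let $A$ be the preimage of $W$ in $X_1$, so $X_2 \le A < X_1$. Then $AG' = X_1$ (from $W + X_2G'/X_2 = X_1/X_2$) and $A \cap X_2G' = X_2$ (from $W \cap (X_2G'/X_2) = 0$), whence $A \cap G' = X_2 \cap G' = [X_1,G]$ by (1). Since $[X_1,G] \le A < AG' = X_1$, Theorem \ref{sevena}(3) makes $G/A$ semi-extraspecial with $Z(G/A) = X_1/A$. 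Finally, $|X_1:A| = |X_2G':X_2| = |G:X_1|^{1/2}$ by (1), so $|G/A:Z(G/A)| = |Z(G/A)|^2$ and $G/A$ is ultraspecial.

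The main obstacle is to verify that $G/X_2$ is a $VZ$-group, not merely a $GVZ$-group: without further argument $G/X_2$ might admit nonlinear irreducible characters whose character-center is properly smaller than $Z(G/X_2)$, and it is Lemma \ref{ten} together with the nested hypothesis that rules this out. Once $G/X_2$ is seen to be a capable $VZ$-group, Lemma \ref{sevenb} provides exactly the index identity that, together with Theorem \ref{sevena}(4), squeezes $|G':[X_1,G]|$ from both sides and yields $X_2 \cap G' = [X_1,G]$; the construction in (2) is then routine linear algebra.
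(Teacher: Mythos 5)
Your proof is correct and takes essentially the same route as the paper: establish that $G/X_2$ is a capable VZ-group (capability from $Z(G/[X_2,G]) = X_2/[X_2,G]$), apply Lemma \ref{sevenb}, squeeze $|G':[X_1,G]|$ between $|G':X_2 \cap G'|$ and the bound of Theorem \ref{sevena}(4) to get (1), and take a complement to $X_2G'/X_2$ in the elementary abelian quotient $X_1/X_2$ to get the ultraspecial quotient in (2). The only cosmetic differences are that you re-derive the VZ property of $G/X_2$ via Lemma \ref{ten} where the paper just invokes Theorem \ref{sevena}(3) directly with $N = X_2$, and that you should state explicitly that your subgroup $A$ is normal in $G$ before forming $G/A$ — which is immediate, as the paper notes, since $X_1/X_2 = Z(G/X_2)$ is central in $G/X_2$.
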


\begin{proof}
By Lemma \ref{sixd}, we know that $[X_1,G] \le X_2$.  Since $X_2 < X_2G'$, we may apply Theorem \ref{sevena} (3) to see that $G/X_2$ is a VZ-group with center $X_1/X_2$.  Because $X_2/[X_2,G] = Z (G/[X_2,G])$, we conclude that $G/X_2$ is capable.  By Lemma \ref{sevenb}, we have $|G:X_1| = |X_2G':X_2|^2$.  Notice that $[X_1,G] \le X_2 \cap G'$ so $$|G':[X_1,G]| \ge |G':X_2 \cap G'| = |X_2G':X_2|.$$  In light of Theorem \ref{sevena} (4), we have 
$$|G':[X_1,G]| \le |G:X_1|^{1/2} = |X_2G':X_2|.$$  Thus, we must have equality and so, $X_2 \cap G' = [X_1,G]$.

By Lemma \ref{nine}, we know that $X_1/X_2$ is elementary abelian.  We can find a subgroup $A$ with $X_1/X_2 = X_2G'/X_2 \times A/X_2$.  This implies that $X_1 = A(X_2G') = AG'$ and $A \cap X_2G' = X_2$.  Since $X_1/X_2$ is central in $G/X_2$ by Lemma \ref{sixd}, we see that $A$ is normal in $G$.  Observe that $|X_1:A| = |X_2G':X_2| > 1$.  Applying Theorem \ref{sevena} (3), $G/A$ is a semi-extraspecial group.  From conclusion (1), we see that $|X_1:A| = |G:X_1|^{1/2}$, and this implies that $G/A$ is ultra-special.
\end{proof}

\noindent
{\bf Question:}  Do we have any hope of determining when a nested GVZ-group will be capable?

We close this section by considering a nested $p$-group with $p \in \cd G$.

\begin{lemma}
Let $G$ be a nested $p$-group with chain of centers $G = X_0 > X_1 > \dots X_n \ge 1$.  If $p \in \cd G$, then $|G:X_1| = p^2$ and $|G':[X_1,G]| = p$.
\end{lemma}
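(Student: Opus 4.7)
The plan is to apply Theorem \ref{sevena} to the normal subgroup $N = [X_1,G]$ and then use induction on $|G|$ to produce a degree-$p$ character that factors through the resulting VZ-quotient.

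Since $p \in \cd{G}$, $G$ is nonabelian. Corollary \ref{threec} gives $[X_1,G] < G'$, so Theorem \ref{sevena}(3) applied with $N = [X_1,G]$ shows that $G/[X_1,G]$ is a VZ-group with center $X_1/[X_1,G]$. Every nonlinear irreducible character of a VZ-group has degree equal to the square root of the index of the center, so $\cd{G/[X_1,G]} = \{1,d\}$, where $d^{2} = |G:X_1|$; in particular $d \in \cd{G}$ and $d \ge p$. Once $d = p$ is established, $|G:X_1| = p^{2}$ is immediate, and combining the bound $|G':[X_1,G]| \le d$ from Theorem \ref{sevena}(4) with the strict containment $[X_1,G] < G'$ from Corollary \ref{threec} yields $|G':[X_1,G]| = p$.

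To prove $d = p$, I induct on $|G|$ and aim to produce $\chi \in \irr{G}$ with $\chi(1) = p$ and $Z(\chi) = X_1$; for such a character, $[X_1,G] \le \ker{\chi}$ by Lemma \ref{threeb}, and so $\chi$ factors through $G/[X_1,G]$ and has degree $d$, forcing $d = p$. Pick any $\chi$ of degree $p$ and write $Z(\chi) = X_j$. If $j = 1$, we are done. If $2 \le j \le n-1$, then $[X_j,G] \ne 1$, so $G/[X_j,G]$ is strictly smaller; by Lemma \ref{ten} its chain of centers has $X_1/[X_j,G]$ as its second term (for each $1 \le i \le j$, $[X_i,G][X_j,G] = [X_i,G] < [X_{i-1},G] = [X_{i-1},G][X_j,G]$), and the image of $\chi$ has degree $p$, so induction gives $|G:X_1| = |G/[X_j,G] : X_1/[X_j,G]| = p^{2}$. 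If $j = n$ but $\chi$ is not faithful, I apply the induction to $G/\ker{\chi}$: if $X_1(G/\ker{\chi}) = X_{i_1}/\ker{\chi}$ with $i_1 \ge 1$, then $|G:X_{i_1}| = p^{2}$; since $X_{i_1} \le X_1$, this forces $|G:X_1| \le p^{2}$, hence $|G:X_1| = p^{2}$.

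The principal obstacle is the case that $\chi$ is faithful on $G$ with $Z(\chi) = Z(G) = X_n$ cyclic. If $n = 1$, then $G$ has nilpotence class $2$ and is a GVZ-group, so Lemma \ref{strict} yields $\cd{G} = \{1, d\}$, forcing $d = p$. For $n \ge 2$, I would derive a contradiction from Clifford's theorem applied to $\chi|_{X_1}$: since $X_1 \triangleleft G$ and $\chi(1) = p$ is prime, the restriction takes one of three shapes---a scalar $p\mu$ with $\mu$ a $G$-invariant linear character of $X_1$, a sum of $p$ distinct $G$-conjugate linear characters, or an irreducible character of $X_1$ of degree $p$. The first shape forces $X_1 \le Z(\chi) = X_n$ and contradicts $n \ge 2$; the remaining two shapes must be ruled out using the faithfulness of $\chi$, the cyclicity of $Z(G)$, and the structural constraints $[X_1,G] \le X_2$ (Lemma \ref{sixd}) and $X_{n-1}/X_n$ elementary abelian (Lemma \ref{nine}). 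Completing this final step is the main technical difficulty of the argument.
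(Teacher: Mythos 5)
Your reductions for the cases $Z(\chi)=X_j$ with $2\le j\le n-1$ and for $j=n$ with $\chi$ non-faithful are sound (modulo the one-line justification that $|G:X_1|\ge p^2$, which holds because $G/\ker{\psi}$ modulo its center $X_1/\ker{\psi}$ cannot be nontrivial cyclic). But the remaining case is a genuine gap, and the route you propose for it cannot work: the configuration you hope to contradict actually occurs. Take $G$ of maximal class of order $p^4$, e.g.\ $D_{16}$ for $p=2$. As noted in the paper's Examples section, maximal class groups are nested (via Qian--Wang and Lemma \ref{knfour}), and here the chain of centers is $G > Z_2(G) > Z(G)$, so $n=2$; $Z(G)$ is cyclic of order $p$, and $G$ has a faithful irreducible character $\chi$ of degree $p$ with $Z(\chi)=Z(G)=X_2$. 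In this group $X_1=Z_2(G)=G'$ is abelian, and $\chi$ restricted to $X_1$ is a sum of $p$ distinct $G$-conjugate linear characters---exactly your shape (b)---while every constraint you list ($[X_1,G]\le X_2$, $X_1/X_2$ elementary abelian, $Z(G)$ cyclic, faithfulness of $\chi$) is satisfied. Since these groups also satisfy the lemma's conclusions ($|G:X_1|=p^2$ and $|G':[X_1,G]|=p$), shape (b) is fully consistent with your hypotheses and no contradiction can be extracted; the most one could hope to show is that some \emph{other} degree-$p$ character has center $X_1$, which is a different and unaddressed argument.

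The paper sidesteps the case division entirely, and its mechanism is exactly the patch your faithful case needs. Since $G$ is a $p$-group, $\chi$ is monomial: $\chi=\lambda^G$ with $\lambda$ linear on a subgroup $T$ of index $p$, which is normal, and $T'\le {\rm Core}_G(\ker{\lambda})=\ker{\chi}$ by Lemma 5.11 of Isaacs. Hence $T\ker{\chi}/\ker{\chi}$ is an abelian normal subgroup of index $p$ in $G/\ker{\chi}$, and It\^o's theorem gives $\cd {G/\ker{\chi}}=\{1,p\}$ \emph{regardless} of where $Z(\chi)$ sits; in your hard case ($\chi$ faithful) this says $G$ itself has an abelian subgroup of index $p$ and $\cd G=\{1,p\}$. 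The paper then passes to the nonabelian class-two nested quotient $G/[G',G]\ker{\chi}$ (nonabelian since $[G',G]\ker{\chi}<G'\ker{\chi}$ by nilpotency), which is GVZ, so Lemma \ref{strict} and Theorem \ref{sevena} force the relevant term of the chain of centers to be some $X_i$ with $|G:X_i|=p^2$; the inequalities $p^2\le|G:X_1|\le|G:X_i|$ then pin down $i=1$. Your endgame---$|G':[X_1,G]|\le p$ from Theorem \ref{sevena}(4) combined with the strict containment $[X_1,G]<G'$ from Corollary \ref{threec}---coincides with the paper's. If you replace the Clifford analysis in your final case with this monomial/It\^o observation, your induction becomes correct, though at that point the induction itself is unnecessary, since the paper's argument is uniform in the chosen degree-$p$ character.
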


\begin{proof}
Since $p \in \cd G$, there exists a character $\chi \in \irr G$ with $\chi (1) = p$, and set $N = \ker {\chi}$.   Since $G$ is a $p$-group, we know that $\chi$ is monomial, so there exists a subgroup $T < G$ with a character $\lambda \in \irr T$ so that $\lambda^G = \chi$ and $\lambda (1) = 1$.  It follows that $|G:T| = p$.  Hence, we have that $T$ is normal in $G$, and thus, $T'$ is normal in $G$.  By Lemma 5.11 of \cite{text}, we have that $N = {\rm Core}_G (\ker {\lambda})$.  Since $\ker{\lambda} \le T$, this implies $N \le T$.  As $\lambda$ is linear, we have $T' \le \ker{\lambda}$ and since $T'$ is normal in $G$, we conclude that $T' \le {\rm Core}_G (\ker{\lambda}) =  N$.  Since $T/N$ is a normal abelian subgroup of $G/N$, we may apply It\^o's theorem to see that every degree in $\cd {G/N}$ divides $|G:T| = p$.  Using the fact that $\chi \in \irr {G/N}$, we deduce $\cd {G/N} = \{ 1, p \}$.  Observe that $[G',G]N < G'N$, so $G/[G',G]N$ is a nested group with nilpotence class $2$.  We know by Lemma \ref{sixa} that $Z (G/N) = X_i/N$ for some $i \in \{ 1, \dots, n \}$.  Applying Theorem \ref{sevena}, we have that $|G:X_i| = p^2$.  Applying the fact that $p^2 \le |G:X_1| < |G:X_2|$, we conclude that $i = 1$ and $|G:X_1| = p^2$.  By Theorem \ref{sevena}, we have $|G':[G,X_1]| \le p$, and by Corollary \ref{threec}, $[G,X_1] < [G,X_0] = G'$.  Therefore, $|G':[G,X_1]| = p$.
\end{proof}

\section{Examples}

In this section we present a number of different examples.  We begin by noting that any group with a unique chief series will be nested.  In particular, all simple groups trivially are nested under this definition.  It is not difficult to find nonsolvable examples of nested groups that are not simple.  To see examples of nested groups that are solvable, but not nilpotent, observe that a Frobenius group $G$ where $G'$ is the Frobenius kernel and minimal normal will be a nested group.

Using the small groups database in the computer algebra system \cite{magma}, the group ${\rm SmallGroup} (32,9)$ gives an example of a nested group that is not nested by degrees.  On the other hand, if if $G$ is a simple group, then $G$ is nested by degrees since if $\chi,\nu \in \irr G$ with $\chi (1) \le \nu (1)$, then there are three possibilities: (1) $\nu (1) = 1$ and in this case, $\chi (1) = 1$ so $Z(\chi) = Z (\nu) = G$; (2) $\chi (1) = 1$ and $\nu (1) > 1$ and in this case $G = Z (\chi) \ge Z (\nu) = 1$; or (3) $\chi (1) > 1$, and in this case, $\nu (1) > 1$ and $Z(\chi) = Z (\nu) = 1$.  On the other hand, since a nonabelian simple group has at least three character degrees other than $1$, it will not be strictly nested by degrees.  In addition, if $G$ is a Frobenius group where $G'$ is the Frobenius kernel and $G'$ is minimal normal in $G$ and $G/G'$ is abelian, then $G$ is a solvable group that is strictly nested by degrees.  

At this time, we do not have any examples of solvable or nilpotent groups that are nested by degrees but not strictly nested by degrees.  In fact, Shawn Burkett has written code in Magma to check all of the small groups of order $2^n$ for $n \le 8$ and $3^n$ for $n \le 7$, and no examples have been found.  However, we see nothing preventing the existence of $p$-groups that are nested by degrees that are not strictly nested by degrees.


In \cite{gvz}, Nenciu gives examples of GVZ-groups that are not nested and of GVZ-groups that are nested.  In neither \cite{gvz} nor \cite{nested} does Nenciu present any groups that are nested, but not a GVZ-group.  We rectify that now by noting that ${\rm SmallGroup} (64,258)$ in the small groups library in Magma is an example of such a group.

It is not difficult to see that if $G$ is a $p$-group with $|G| = p^a$, then $G$ has nilpotence class at most $a-1$.  Following standard nomenclature, we say that $G$ is of {\it maximal class} if $G$ has nilpotence class $a-1$.  As we mentioned in the Introduction, Qian and Wang proved in \cite{qiwa} that a $p$-group $G$  has that the kernels of the nonlinear irreducible characters form a chain if and only if $G$ has maximal class or $G'$ has order $p$ and $Z(G)$ is cyclic.  Note that if $G$ has maximal class, then this implies that the centers of the nonlinear irreducible characters form a chain, and by Lemma \ref{knfour}, $G$ is a nested group.  If, in addition, $G$ has order $p^n$ where $n \ge 4$, then $|G:Z_{n-2} (G)| = p^3$ and $|G:Z_{n-3}| = p^4$.  It follows that $\cd {G/Z_{n-2}} = \cd {G/Z_{n-3}} = \{ 1, p\}$, and so we cannot have that $G$ is nested by degrees.  Hence, the maximal class groups of nilpotence class more than $2$ give examples of nested groups that are not nested by degrees. 



In the literature, the idea of maximal class has been generalized to the idea of coclass.  A $p$-group $G$ is said to have {\rm coclass} $n$ if $|G| = p^m$ for some integer $m \ge 2$ and the nilpotence class of $G$ is $m-n$.  In particular, $G$ is of maximal class if and only if $G$ has coclass $1$.  By the result of Qian and Wang, we see that if $G$ has coclass $2$ and either $G'$ does not have order $p$ or the center of $G$ is not cyclic, then the kernels of the nonlinear irreducible characters of $G$ do not form a chain.  On the other hand, we can show that groups of coclass $2$ are nested.

\begin{lemma}\label{last}
If $G$ is a $p$-group of coclass $2$ and nilpotence class at least $3$, then $G$ is a nested group whose chain of centers is the upper central series for $G$.  If in addition $G$ has nilpotence class at least $4$, then $G$ is not nested by degrees.
\end{lemma}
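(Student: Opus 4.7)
The plan is to apply Lemma \ref{sixca} with the subgroups $X_i := Z_{c-i}(G)$, where $c$ is the nilpotence class of $G$ and $Z_j(G)$ denotes the $j$-th term of the upper central series. The task then reduces to verifying that for every normal subgroup $N$ of $G$, the center $Z(G/N)$ equals $Z_j(G)/N$ for some $j$; granted this, Lemma \ref{sixca} yields that $G$ is nested, and realizability of each $Z_j(G)$ as the center of an irreducible character (checked via Lemma \ref{ten} by choosing $N$ with $N[Z_j,G] < N[Z_{j-1},G]$) forces the chain of centers to coincide with the upper central series.

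To verify the identification $Z(G/N) = Z_j(G)/N$ I would proceed by induction on the class $c$. The key reduction is that $G/Z(G)$ has class $c-1$ and coclass $1$ or $2$, since $|Z(G)| \in \{p, p^2\}$ (the case $|Z(G)| \geq p^3$ is ruled out by the requirement that $G$ has class $\geq 3$). In either case $G/Z(G)$ is already known to be nested with chain of centers equal to its upper central series: the coclass $1$ case follows from the maximal class discussion at the end of the examples section, while the coclass $2$ case is handled by the induction hypothesis, once the base case $c=3$ is checked by a direct analysis of the coclass $2$ groups of order $p^5$. For a normal subgroup $N$ with $Z(G) \leq N$ this immediately gives $Z(G/N) = Z_j(G)/N$ for some $j \geq 2$. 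For $N < Z(G)$, one verifies by hand that $Z(G/N) = Z(G)/N$ when $[Z_2(G),G] \not\leq N$ and $Z(G/N) = Z_2(G)/N$ otherwise, exploiting that minimal normal subgroups sitting inside $Z(G)$ all collapse into the single term $Z_1(G)$ at the bottom of the chain.

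For the second claim, I imitate the maximal class argument given just before the lemma in the examples section. With $c \geq 4$, consider the quotients $G/Z_{c-2}(G)$ and $G/Z_{c-3}(G)$; their orders are small powers of $p$ differing by a factor of $p$, and by direct analysis each is either a maximal class group of order at most $p^4$ or a coclass $2$ group of order $p^4$ or $p^5$ whose derived subgroup has order $p$. In every case both quotients have character degree set $\{1,p\}$. Consequently the irreducible characters whose centers are $Z_{c-2}(G)$ and $Z_{c-3}(G)$ both have degree $p$, contradicting the strict degree--center correspondence of Lemma \ref{fivea}, so $G$ is not nested by degrees. The main obstacle throughout is the lifting step in the induction: the potential non-cyclicity of $Z(G)$ when $|Z(G)| = p^2$ means that rank-$2$ elementary central subgroups could in principle insert new terms into the chain of centers coming from normal subgroups strictly below $Z(G)$, and ruling this out requires the careful case analysis sketched above rather than a bare appeal to the upper central series being a chain.
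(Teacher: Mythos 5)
Your proposal for the first assertion has a genuine gap: the case analysis over normal subgroups is incomplete. You treat $N$ with $Z(G) \le N$ (via the quotient $G/Z(G)$ and induction on the class) and $N < Z(G)$ (by hand), but when $|Z(G)| = p^2$ there are normal subgroups \emph{incomparable} with $Z(G)$, and these are exactly the hard case. For instance, if $M$ has maximal class and order $p^{c+1}$, then $G = M \times Z_p$ has coclass $2$ and class $c$, and $N = \gamma_i(M) \times 1$ is normal with $Z(G) \not\le N$ and $N \not\le Z(G)$. Your induction through $G/Z(G)$ cannot reach such $N$, and verifying $Z(G/N) = Z_j(G)/N$ for them is where the real work lies. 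The paper's proof is built around precisely this case: it inducts on $|G|$ rather than on the class, sets $L = Z(G) \cap N$ (of order $p$), shows that the preimage $Z_L$ of $Z(G/L)$ is $Z(G)$ or $Z_2(G)$ --- using that $|Z(G)| = p^2$ forces $G/Z(G)$ to have maximal class, whence $|Z_2(G):Z(G)| = p$ --- and then applies the inductive hypothesis to $G/L$ and transfers back via Lemma \ref{ten}. Note that your dichotomy for $N < Z(G)$ tacitly relies on this same fact $|Z_2(G):Z(G)| = p$ (otherwise an intermediate subgroup $Z(G) < Y < Z_2(G)$ with $[Y,G] \le N$ could make $Z(G/N)$ land strictly between $Z(G)/N$ and $Z_2(G)/N$); the remark about minimal normal subgroups collapsing into $Z_1(G)$ does not establish it. Finally, the base case $c = 3$ and the claim that maximal class groups have chain of centers equal to the upper central series are both deferred to unperformed ``direct analysis''; the latter is genuinely needed, since the examples section of the paper only proves that maximal class groups are nested.

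The second assertion also has a flaw as argued. You claim $G/Z_{c-3}(G)$ is ``either a maximal class group of order at most $p^4$ or a coclass $2$ group of order $p^4$ or $p^5$ whose derived subgroup has order $p$,'' but this quotient has class exactly $3$, so its derived subgroup has order at least $p^2$; with that goes your justification that its degree set is $\{1,p\}$. Indeed, for a coclass $2$ group of order $p^5$, class $3$, and center of order $p$, nothing in your sketch rules out a fully ramified irreducible character of degree $p^2$, so $\cd {G/Z_{c-3}} = \{1,p\}$ needs proof. The paper avoids computing degree sets of these quotients altogether: since $c \ge 4$, $G$ has a quotient of order $p^6$ (coclass does not increase under quotients), which is then of maximal class or coclass $2$, hence of class at least $4$, while its degree set is contained in $\{1, p, p^2\}$; since a nested-by-degrees group has nilpotence class at most its number of distinct character degrees (three here), that quotient, and therefore $G$, cannot be nested by degrees. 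Your underlying idea --- exhibiting two distinct terms of the upper central series realized as centers of characters of equal degree, imitating the maximal class argument --- could be salvaged, but only after proving the degree-set claims you assert.
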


\begin{proof}
We work by induction on the order of $G$.  Let $Z = Z(G)$.  Observe that either $|Z| = p^2$ and $G/Z$ has maximal class or $|Z| = p$ and $G/Z$ has coclass $2$.  In particular, $G/Z$ is nested.  In light of Theorem \ref{main3}, it suffices to show that if $M$ is a normal subgroup of $G$ that does not contain $Z$, then $Z_M$ is a term of the upper central series of $G$ where as before, $Z_M/M = Z(G/M)$.  If $M = 1$, then $Z_M = Z$ and we have what we need.  Thus, we may assume that $M > 1$.  Since $M$ does not contain $Z$, this implies that $|Z| = p^2$.  Notice that this implies that $G/Z$ has maximal class.  Let $L = Z \cap M$, and observe that $1 < L < Z$.  This implies that $|L| = p$.   Observe that $Z \le Z_L$.  Also, $[G,Z_L] \le L < Z$, so $Z_L/Z \le Z(G/Z) = Z_2/Z$.  Hence, we have that $Z \le Z_L \le Z_2$.  Since $G/Z$ has maximal class and is not abelian, we have that $|Z_2:Z| = p$.  We deduce that either $Z_L = Z$ or $Z_L = Z_2$.  Notice that $G/L$ will either have maximal class or will have coclass $2$.  In particular, we can apply the induction hypothesis to $G/L$ to see that $G/L$ will be nested and the chain of centers is the upper central series of $G/L$.  Since $Z_L$ is a member of the upper central series of $G$, the upper central series of $G/L$ is the homomorphic image of the upper central series of $G$.  By Lemma \ref{ten}, we know that $Z_M/L$ will be a term in the upper central series of $G/L$, and thus, $Z_M$ is a term in the central series of $G$, and this proves the lemma.

Suppose that $|G| = p^6$, so $G$ has nilpotence class $4$.  Notice $\cd G \subseteq \{ 1, p, p^2 \}$.  We have seen that if $G$ is nested by degrees, then its nilpotence class is at most the number of character degrees, and so, $G$ cannot be nested by degrees.  If $G$ has nilpotence class at least $4$, then $G$ will have a quotient of order $p^6$.  This quotient will either have maximal class or have coclass $2$.  In either case, the quotient cannot be nested by degrees, and thus, it follow that $G$ cannot be nested by degrees.
\end{proof}

To give an idea of how common or rare nested GVZ-groups are, we present in Table \ref{table 1} the numbers of groups found for orders $2^5$, $2^6$, $2^7$, $2^8$, $3^5$, $3^6$, and $3^7$.  Observe from the table that the hypothesis in the second conclusion of Lemma \ref{last} that $G$ have nilpotence class at least $4$ is necessary.  Note that any groups of the orders given that have class $6$ or class $7$ will be maximal class or coclass $2$, and we just showed that all of these groups will be nested and will not be nested by degrees, so we have not included these columns in the table.  By Lemma \ref{strict}, we see that when $G$ is class $2$ the set of nested groups is the same as the set of groups that are nested by degrees and the same as the set of nested GVZ-groups.  Recall that a group that is nested by degrees has nilpotence class at most the number of distinct character degrees.  If $|G| = p^{2k +1}$ or $|G| = p^{2k+2}$ and $G$ is nested by degrees, then $G$ has nilpotence class at most $k+1$.  This explains why there are no groups that are nested by degrees of class $5$ when $|G|= 2^8$.

\begin{table}[ht]
	\begin{tabular}{|c|c|c|c|c|c|}
		\hline
		Order&Type&Class 2&Class 3&Class 4&Class 5\\
		\hline
		\multirow{5}{*}{$2^5$}&All&26&15&3&0 \\
		\cline{2-6}
		&Nested&17&15&3&0 \\
		\cline{2-6}
		&Nested by&\multirow{2}{*}{17}&\multirow{2}{*}{5}&\multirow{2}{*}{0}&\multirow{2}{*}{0} \\
		&Degrees&&&&\\
		\cline{2-6}
		&Nested GVZ&17&5&0&0 \\
		\hline
		\multirow{5}{*}{$2^6$}&All&117&114&22&3 \\
		\cline{2-6}
		&Nested&57&52&22&3 \\
		\cline{2-6}
		&Nested by&\multirow{2}{*}{57}&\multirow{2}{*}{27}&\multirow{2}{*}{0}&\multirow{2}{*}{0} \\
		&Degrees&&&&\\
		\cline{2-6}
		&Nested GVZ&57&24&0&0 \\
		\hline
		\multirow{5}{*}{$2^7$}&All&947&1137&197&29 \\
		\cline{2-6}
		&Nested&211&172&97&29 \\
		\cline{2-6}
		&Nested by&\multirow{2}{*}{211}&\multirow{2}{*}{95}&\multirow{2}{*}{11}&\multirow{2}{*}{0} \\
		&Degrees&&&&\\
		\cline{2-6}
		&Nested GVZ&211&76&11&0 \\
		\hline
		\multirow{5}{*}{$2^8$}&All&31742&21325&2642&320 \\
		\cline{2-6}
		&Nested&747&698&328&185 \\
		\cline{2-6}
		&Nested by&\multirow{2}{*}{747}&\multirow{2}{*}{371}&\multirow{2}{*}{81}&\multirow{2}{*}{0} \\
		&Degrees&&&&\\
		\cline{2-6}
		&Nested GVZ&747&277&66&0 \\
		\hline
		\multirow{5}{*}{$3^5$}&All&28&26&6&0 \\
		\cline{2-6}
		&Nested&17&26&6&0 \\
		\cline{2-6}
		&Nested by&\multirow{2}{*}{17}&\multirow{2}{*}{6}&\multirow{2}{*}{0}&\multirow{2}{*}{0} \\
		&Degrees&&&&\\
		\cline{2-6}
		&Nested GVZ&17&6&0&0 \\
		\hline
		\multirow{5}{*}{$3^6$}&All&133&282&71&7 \\
		\cline{2-6}
		&Nested&60&142&71&7 \\
		\cline{2-6}
		&Nested by&\multirow{2}{*}{60}&\multirow{2}{*}{58}&\multirow{2}{*}{0}&\multirow{2}{*}{0} \\
		&Degrees&&&&\\
		\cline{2-6}
		&Nested GVZ&60&51&0&0 \\
		\hline
		\multirow{5}{*}{$3^7$}&All&1757&6050&1309&173 \\
		\cline{2-6}
		&Nested&299&990&588&173 \\
		\cline{2-6}
		&Nested by&\multirow{2}{*}{299}&\multirow{2}{*}{222}&\multirow{2}{*}{33}&\multirow{2}{*}{0} \\
		&Degrees&&&&\\
		\cline{2-6}
		&Nested GVZ&299&187&33&0 \\
		\hline
	\end{tabular}
	\caption{Numbers of Groups} \label{table 1}
\end{table}

We now are going to recover a result of Isaacs that is the main result of \cite{isap}.  The main result of that paper is that if $p$ is a prime and ${\mathcal A}$ is a set of powers of $p$ that contain $1$, then there is a $p$-group $P$ of nilpotence class $2$ with $\cd P = {\mathcal A}$.  We now show that $P$ can be chosen to have the additional property that $P$ is nested by degrees.  Note that Theorem \ref{ex1} yields Theorem \ref{main6} of the Introduction.

\medskip
\noindent
{\bf Example 1:}  Suppose ${\mathcal A} = \{ 1, p^{n_1}, \dots, p^{n_m} \}$ so that $0 < n_1 < n_2 < \dots < n_m$.  We let $P$ be the $p$-group generated by $x_1, x_2, \dots, x_{2n_m}$ and $z_1, \dots, z_m$ where each of the $x_i$'s and $z_j$'s have order $p$ and the $z_j$'s are all central.  We assume that we have the following commutators.  For each integer $j$ satisfying $1 \le j \le m$ set the commutators $[x_i,x_{i+n_j}] = z_j$ for each integer $1 \le i \le n_j$.  Assume that the remaining commutators are defined linearly or are trivial.

\begin{theorem}\label{ex1}
Let $G$ be the group defined in Example 1.  Then $G$ is a $p$-group of nilpotence class $2$, $G$ is nested by degrees, and $\cd G = \{ 1, p^{n_1}, \dots, p^{n_m} \}$.
\end{theorem}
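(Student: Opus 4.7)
The plan is to analyse $G$ as a class-$2$ $p$-group (hence a GVZ-group), and use the GVZ formulas $Z(\chi) = \{g : [g,G] \le \ker \lambda\}$ and $\chi(1)^2 = |G:Z(\chi)|$ (for $\chi$ lying over $\lambda \in \irr{G'}$) to reduce everything to a rank computation for an alternating form on $V := G/Z(G)$.

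First I would verify the basic structure. By construction every commutator is a power of some $z_j$ and is central, so $G' \le Z(G)$ and $G$ has class $\le 2$. The $z_j$ are linearly independent (e.g.\ by realising $G$ as an appropriate quotient of the free class-$2$ exponent-$p$ group on $x_1,\dots,x_{2n_m}$), giving $G' = \langle z_1,\dots,z_m\rangle \cong \mathbb{F}_p^m$. To show $Z(G) \le G'$, take $v = x_1^{a_1}\cdots x_{2n_m}^{a_{2n_m}}$ central; the $z_m$-coefficient of $[v, x_k]$ equals $a_{k-n_m}$ for $n_m+1 \le k \le 2n_m$ and $-a_{k+n_m}$ for $1 \le k \le n_m$, and these two ranges cover all indices, so vanishing forces every $a_i \equiv 0 \pmod p$. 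Thus $Z(G) = G'$ and $V$ is elementary abelian with basis $\bar x_1,\dots,\bar x_{2n_m}$.

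Write $\lambda(z_j) = \omega^{c_j}$ with $\omega$ a fixed primitive $p$-th root of unity and $c_j \in \mathbb{F}_p$. The commutator form on $V$ becomes $F := \sum_j c_j B_j$, where $B_j(\bar x_i,\bar x_k) = 1$ exactly when $k = i+n_j$ and $1 \le i \le n_j$ (and zero otherwise). Let $j_0 := \max\{j : c_j \ne 0\}$ (with $j_0 := 0$ when $\lambda = 1_{G'}$). I claim $\mathrm{rad}(F) = R_{j_0} := \mathrm{span}(\bar x_{2n_{j_0}+1},\dots,\bar x_{2n_m})$, giving $\chi(1)^2 = |V:R_{j_0}| = p^{2n_{j_0}}$ and thus $\chi(1) = p^{n_{j_0}}$. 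For $l > 2n_{j_0}$, any nonzero $B_j(\bar x_l,\bar x_k)$ forces $n_j \ge n_{j_0}+1$ (hence $j > j_0$ and $c_j = 0$), so $R_{j_0} \subseteq \mathrm{rad}(F)$. The heart of the argument is that $F$ is non-degenerate on $W := \mathrm{span}(\bar x_1,\dots,\bar x_{2n_{j_0}})$. Split $W = U \oplus U'$ with $U$ the first $n_{j_0}$ basis vectors and $U'$ the next $n_{j_0}$. A support check gives $F|_{U' \times U'} = 0$ (indices $> n_{j_0}$ cannot satisfy $\min(i,k) \le n_j$ for any $j \le j_0$), and the $n_{j_0} \times n_{j_0}$ matrix $M$ of $F|_{U \times U'}$ is lower triangular with $c_{j_0}$ on the diagonal: an entry strictly above the diagonal would demand $n_j > n_{j_0}$ (hence $c_j = 0$), while the diagonal contribution comes solely from $B_{j_0}$. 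Therefore $\det M = c_{j_0}^{n_{j_0}} \ne 0$, and the block identity $\det\bigl(\begin{smallmatrix} A & M \\ -M^T & 0\end{smallmatrix}\bigr) = \det(M)^2$ yields $\det(F|_W) = c_{j_0}^{2n_{j_0}} \ne 0$.

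As $\lambda$ ranges over $\irr{G'}$, $j_0$ attains every value in $\{0,1,\dots,m\}$, so $\cd G = \{1, p^{n_1}, \dots, p^{n_m}\} = \mathcal A$. Since $Z(\chi)/G' = R_{j_0}$ depends only on $\chi(1)$ and the chain $R_0 \supsetneq R_1 \supsetneq \cdots \supsetneq R_m$ strictly descends, $G$ is strictly nested by degrees and in particular nested by degrees. The main obstacle is the triangularity of $M$: it rests on the offset feature of the construction---the constraint $1 \le i \le n_j$ in the relation $[x_i, x_{i+n_j}] = z_j$ bounds how far each $z_j$ can reach across the indexed generators, so upper-triangular contributions to $M$ can only come from higher-index $z_j$ already silenced by the choice of $j_0$.
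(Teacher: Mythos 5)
Your proof is correct, and it takes a genuinely different route from the paper's. The paper avoids computing with individual characters: it verifies the quotient-center criterion of Theorem \ref{main3} by showing, for every normal subgroup $N$ of $G$, that $Z(G/N) = X_i/N$ where $i$ is the largest index with $z_i \notin N$ (pinning the center down from above via an auxiliary quotient isomorphic to an extraspecial group $E_i$), and then invokes Lemma \ref{sixca} to conclude $G$ is nested and Lemma \ref{strict} (nilpotence class $2$ implies GVZ) to convert $|G:X_i| = p^{2n_i}$ into the degree set and nestedness by degrees. You instead fix $\lambda \in \irr {G'}$, encode it as $(c_1,\dots,c_m)$, and compute the radical of $F = \sum_j c_j B_j$ on $G/Z(G)$ outright: the distinctness of the $n_j$ makes each entry of your matrix $M$ a single coefficient $c_j$, the offset constraint $i \le n_j$ kills everything above the diagonal, and the sign in $\det\bigl(\begin{smallmatrix} A & M \\ -M^T & 0 \end{smallmatrix}\bigr) = \det(M)^2$ works out since $n_{j_0}(n_{j_0}+1)$ is even; this yields $Z(\chi) = X_{j_0}$ and $\chi(1) = p^{n_{j_0}}$ for every $\chi$ over $\lambda$ at one stroke, and your explicit verification that $Z(G) = G'$ via the $z_m$-coefficients is a point the paper asserts without detail. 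What each approach buys: the paper's argument exercises its general machinery and needs essentially no linear algebra, while yours is a self-contained computation over $\mathbb{F}_p$ that delivers finer information --- the exact center and degree of each character as a function of $\lambda$ --- from which strict nestedness by degrees is immediate rather than being routed through Lemma \ref{strict}. One small caveat: your parenthetical appeal to the free class-$2$ exponent-$p$ group to certify independence of the $z_j$ fails at $p = 2$, since exponent $2$ forces a group to be abelian; there one should instead use the free class-$2$ group with the relations $x_i^p = 1$ imposed only on the generators, or a cocycle construction on $\mathbb{F}_p^{2n_m} \times \mathbb{F}_p^m$ --- an existence detail the paper likewise leaves implicit.
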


\begin{proof}
We see that $G' = \langle z_1, \dots, z_m \rangle \le Z(G)$ and $|G| = p^{2n_m + m}$, so $G$ is a $p$-group of nilpotence class $2$.  For each integer $i$ with $0 \le i \le m$, set $X_i = \langle x_{2n_i + 1}, \dots, x_{2m}, Z(G) \rangle$.  Observe that $X_0 > X_1 > \dots X_m = G' = Z(G)$ is a series of normal subgroups.

Let $N$ be a subgroup of $Z(G)$ and let $i$ be the largest integer in $\{ 1, \dots, m \}$ such that $z_i \not\in N$. Hence, for all $j \ge i+1$, we have $z_j \in N$.  We claim that $X_i/N \le Z (G/N)$.  Consider an integer $j$ with $j \ge 2n_i + 1$.  Since $j > 2n_i$, we see that any commutator of $x_j$ with any $x_k$ will only involve $z_{j+1}, \dots, z_m$ and will lie in $N$.  It follows that $[x_j,G] \subseteq N$.  Since $X_i$ is generated by such $x_j$'s it follows that $[X_i,G] \le N$ and so, $X_i/N \le Z(G/N)$.

Let $N$ be a subgroup of $Z(G)$ having index $p$ in $Z(G)$.  We claim that $Z(G/N) = X_i/N$ where $i$ is the largest integer in $\{ 1, \dots, m \}$ such that $z_i \not\in N$.  Let $Z/N = Z(G/N)$, and by the previous paragraph, we have $X_i \le Z$.  Now, let $Y = \langle  x_{2n_i + 1}, \dots, x_{2m}, N \rangle$.  Observe that $N \le Y \le X_i$.  Since $X_i/N$ is central in $G/N$, this implies that $Y$ is normal in $G$.  Also, we see that $X_i/N = Y/N \times Z(G)/N$.  Let $E_i$ be the extraspecial group generated by $e_1, \dots, e_{2n_i}, z$ where each $e_k$ and $z$ has order $p$, $z$ is central, and $[e_j,e_{i+n_i}] = z$.  Observe that the map defined by $x_k Y \mapsto e_k$ for all $k$ with $1 \le k \le 2n_i$and $z_i Y \mapsto z$ is an isomorphism.  Note that this forces $X_i/Y = Z(G/Y)$.  We see that $[Z,G] \le N \le Y$ and so, $Z/Y \le Z(G/Y) = X_i/Y$.  We deduce that $Z \le X_i$.  Since we already showed that $X_i \le Z$, we conclude that $Z= X_i$.

Let $M$ be any normal subgroup of $G$.  Let $Z/M = Z(G/M)$.  If $Z(G) = G' \le M$, then $Z/M = G/M = X_0/M$ and $Z = X_0$.  Thus, we may assume that $Z(G)$ is not contained in $M$.  Let $i$ be the largest integer in $\{ 1, \dots, m \}$ so that $z_i$ is not contained in $M$.  By the second paragraph, we have $X_i \le Z$.  Let $N$ be a subgroup of index $p$ in $Z(G)$ that contains $M \cap Z(G)$ and does not contain $z_i$.  We see that $[Z,G] \le G' \cap M = Z(G) \cap M \le N$.  Hence, $Z/N \le Z(G/N) = X_i/N$ by the previous paragraph and so, $Z \le X_i$.  Therefore, we have $Z = X_i$.  This shows that $Z(G/[X_i,G]) = X_i/[X_i,G]$.  Applying Lemma \ref{sixca}, we conclude that $G$ is nested with chain of centers $G = X_0 > X_1 > \dots > X_m > 1$.  We have previously observed that any group of nilpotence class $2$ is a GVZ-group.  We obtain the remaining conclusions by appealing to Lemma \ref{strict}.
\end{proof}

\noindent
{\bf Example 2:}  Let $p$ be a prime and let $q$ and $r$ be distinct primes such that $p$ divides $q-1$ and $r-1$.  Let $G$ be the Frobenius group whose Frobenius kernel has order $qr$ and whose Frobenius complement has order $p$. Let $Q$ be the Sylow $q$-subgroup of $G$ and let $R$ be the Sylow $r$-subgroup of $G$.  Observe that $QR = G'$ is the Frobenius kernel of $G$.  Also, $G/Q$ and $G/R$ are Frobenius groups of order $pr$ and $pq$ respectively.  Notice that the nonlinear irreducible characters of $G/Q$ and $G/R$ will be faithful.  Hence, there exist irreducible characters of $G$ whose kernels are $Q$ and $R$, respectively.  Let $1_Q \ne \alpha \in \irr Q$ and $1_R \ne \beta \in \irr R$.  Then $\alpha \times \beta \in \irr {QR}$ and so, $\chi = (\alpha \times \beta)^G \in \irr G$.  Observe that $\ker {\chi} = 1$.  It follows that every proper subgroup of $QR = G'$ lies in ${\rm nlKer} (G)$.  On the other hand, subgroups in ${\rm nlKer} (G)$ do not form a chain.  Hence, the equivalence for $p$-groups found in the Main Theorem of \cite{qiwa} does not hold when the $p$-group hypothesis is dropped.

\medskip
We note that Nenciu includes an example in \cite{nested} of a nested GVZ-group of order $p^{2n+1}$ and nilpotence class $n+1$.  Her example has exponent $p$.  We note that there is a small error in Nenciu's example.  She states in \cite{nested} that $p \ge n+1$; however, for her computations on the exponent to be valid, she actually needs $p > n+1$.  The group constructed when $p = n+1$ has exponent $p^2$ not $p$.  The mistake comes because $\gamma_{n+1} (G) \ne 1$ when $G$ has nilpotence class $n+1$.  

The following example is from the Introduction of \cite{some}.  See also Example 3 in \cite{ChMc}.  This example also has order $p^{2n+1}$ and nilpotence class $n+1$; however, this example has exponent $p^{n+1}$, and the only restriction we need on $p$ is that $p$ is odd.

\medskip
\noindent
{\bf Example 3:}  Let $p$ be an odd prime and let $n$ be a positive integer.  Let $C_n$ be the cyclic group of order $p^{n+1}$. It is well known that the Sylow $p$-subgroup of ${\rm Aut} (C_n)$ is cyclic of order $p^n$.  Write $A_n$ for the Sylow $p$-subgroup of ${\rm Aut} (C_n)$, and let $G_n = C_n \rtimes A_n$.  Observe that $|A_n| = p^n$, and so, $G_n = p^{n+1}p^n = p^{2n+1}$  


Let $C_n = \langle x \rangle$ and $A_n = \langle y \rangle$ so that $x^{p^{n+1}} = 1$ and $y^{p^{n}} = 1$.  We have $x^y = x^{1+p}$ which yields $[x,y] = x^p$.  If $i$ is an integer, then $(x^i)^y = (x^y)^i = (x^{p+1})^i = x^{ip+i}$, and thus, $[x^i,y] = x^{ip}$ for every integer $i$.  Now for a nonnegative integer $j$, we obtain $$[x^iy^j,y] = [x^i,y]^{y^j} [y^j,y] = (x^{ip})^{y^j} = (x^{ip})^{(1+p)^j} = x^{ip(1+p)^j}.$$  If $k$ is a nonnegative integer, then $(x^i)^{y^k} = (x^i)^{(1+p)^k} = x^{i(1+p)^k}$, and we obtain $[x^i,y^k] = x^{i(1+p)^k - i}$.  It follows that $G_n' = \langle x^p \rangle$.  

Observe that $[x^{p^n},y] = x^{p^{n+1}} = 1$, so $x^{p^n} \in Z(G_n)$.    Suppose $x^iy^j \in Z(G_n)$, then by above, we must $p^{n+1}$ divides $ip(1+p)^j$, and this implies that $p^n$ divides $i$.  We have already seen that $x^{p^n}$ is central.  It follows that $y^j \in Z(G)$.  Now, $[x,y^j] = x^{(1+p)^j -1}$.  It follows that $p^{n+1}$ divides $(1+p)^j-1$ and this will occur only if $p^n$ divides $j$.  But $y^{p^n} = 1$, and so, $x^iy^j \in \langle x^{p^n} \rangle$.  We conclude that $Z(G_n) = \langle x^{p^n} \rangle$.  Let $Y = \langle x^{p^n}, y^{p^{n-1}}\rangle$.  It is not difficult to see that $Y$ is normal in $G_n$ and that $G_n/Y \cong G_{n-1}$.  

For all integers $0 \le i \le n+1$, set $X_i = \langle x^{p^i},y^{p^i} \rangle$, $Y_i = \langle x^{p^{i+1}}, y^{p^i} \rangle$, and $Z_i = [X_i,G_n]$.  The above computations show that $Z_i = \langle x^{p^{i+1}} \rangle$.  It follows that $X_i = Z_{i-1} Y_i$ and $Z_i \le X_{i+1} \le Y_i \le X_i$.  Notice that $X_i/Z_i \le Z (G_n/Z_i)$.  The previous paragraph shows that $G_n/Y_i \cong G_i$.  It follows that $Z(G_n/Y_i) = Z_{i-1}Y_i/Y_i = X_i/Y_i$.  Let $Z/Z_i = Z(G_n/Z_i)$.  We see that $[Z,G_n] \le Z_i \le Y_i$, and so, $Z/Z_i \le Z(G_n/Y_i) = X_i/Y_i$.  We obtain $X_i \le Z \le X_i$, and hence, $Z =X_i$.  This implies that $X_i/Z_i = Z(G_n/Z_i)$ for $0 \le i \le n$.  Recall that $G_n' = \langle x^p \rangle$ is cyclic and so $Z_0, Z_1, \dots, Z_n$ is a complete list of the subgroups of $G_n'$.  

Manipulating the commutator equations above, it is not difficult to show that for every $g \in G_n \setminus Z_n$ and for every $z \in Z_n$, there exists $h \in G_n$ so that $[g,h] = z$.  This implies that $(G_n,Z_n)$ is a Camina pair.    It follows that for each $i$ with $1 \le i \le n$ that $(G_n/Y_i,X_i/Y_i)$ is a Camina pair.  

Let $N$ be a normal subgroup of $G_n$ and let $Z_N$ be defined by $Z_N/N = Z(G_n/N)$.  We see that $[Z_N,G_n] \le G_n'$, so $[Z_N,G_n] = Z_i$ for some integer $i$ with $0 \le i \le n$.  Since $X_i/Z_i = Z (G_n/Z_i)$, it follows that $Z_N \le X_i$.  We have $Z_i \le N \le Z_N \le X_i$.   On the other hand, $[X_i,G_n] \le Z_i \le N$, so $X_i/N \le Z(G_n/N) = Z_N/N$.  We deduce that $N < Z_N = X_i$.  Applying Theorem \ref{main3}, we conclude that $G_n$ is a nested group.  

Suppose $\chi \in \irr {G_n}$.  Since $[\ker{\chi},G_n]$ is a subgroup of $G_n'$, we use the observation above to see that there is an integer $i$ so that $[\ker{\chi},G_n] = Z_i$, and so, $Z(\chi) = X_i$ by the previous paragraph.  Also, $Z_i \le \ker {\chi} < X_i$.  We have that $Z(G_n/Z_i) = X_i/Z_i = Z_{i-1}/Z_i \times Y_i/Z_i$.  Let $\mu$ be a constituent of $\chi_{X_i}$.  Then $\mu = \lambda \times \delta$ where $\lambda \in \irr {Z_{i-1}/Z_i}$ and $\delta \in \irr{Y_i/Z_i}$.  If $\lambda = 1$, then $X_i \le \ker {\chi}$; thus, we must have that $\lambda \ne 1$.  We have $\lambda \times 1_{Y_i} \in \irr {X_i/Y_i}$.  Since $(G_n/Y_i,X_i/Y_i)$ is a Camina pair, $(\lambda \times 1_{Y_i})^{G_n}$ has a unique irreducible constituent $\psi \in \irr {G_n/Y_i}$ and $\psi$ is fully ramified with respect to $G/X_i$.

On the other hand, $1_{Z_{i-1}} \times \delta$ extends to $1_{Z_0} \times \delta \in \irr {Z_0Y_i/Z_0}$.  Since $Z_0 = G'$, we see that $1_{Z_0} \times \delta$ will have an extension to $\hat\delta \in \irr {G/G'}$.  It follows that $1_{Z_{i-1}} \times \delta$ extends to $\hat \delta$.  Since $\lambda \times \delta = (\lambda \times 1_{Y_i}) (1_{Z_{i-1}} \times \delta)$, we see that $(\lambda \times \delta)^G = \hat\delta (\lambda \times 1_{Y_i})^G$.  We conclude that $\hat\delta \psi$ is the unique irreducible constituent of $\mu^G$, and so, $\chi = \hat\delta \psi$.  This implies that $\chi$ is fully ramified with respect to $G_n/X_i$.  In particular, we deduce that $G_n$ is a GVZ group. 

Note that the $X_i$'s in reverse order yield the upper central series of $G$, and so, $G$ has nilpotence class $n+1$.  Observe that the exponent of $G$ is $p^{n+1}$.  This shows that there are nested GVZ groups of arbitrarily large nilpotence class and arbitrarily large exponent.  In fact, this shows that the exponent result in Corollary \ref{nine4} cannot be extended any further without additional hypotheses. 

\medskip

We wish to close with an example that shows that under certain circumstances the bound in Theorem \ref{bound} can be met.  To do this, we need to present more information regarding semi-extraspecial groups.  Let $G$ be a semi-extraspecial $p$-group with $|G:G'| = p^{2n}$ and $|G'| = p^m$.  Following either Theorem 3.1 of \cite{some} or Theorem 1.4 of \cite{ver}, there exist $n \times n$ matrices $A_1, \dots, A_m$ where $a^i_{k,l}$ is the $k,l$-entry of $A_i$ and elements $y_1, \dots, y_{2n} \in G$ and  $z_1, \dots, z_m \in Z(G) = G'$ so that $y_s^p \in Z(G)$ and $z_t^p = 1$ for $1 \le s \le 2n$, $1 \le t \le m$, $G = \langle y_1, \dots, y_{2n}\rangle Z(G)$, $Z(G) = \langle z_1, \dots, z_m \rangle$ that satisfy $[y_k,y_l] = \prod_{i=1}^m z_i^{a^i_{k,l}}$ and $0$ is the only singular matrix in ${\rm span} (A_1, \dots, A_m)$.  We note that if $m = n$ and $G$ has an abelian subgroup of order $p^{2n}$, then the $A_i$'s may be chosen so that $a^i_{k,l} = 0$ for all $i$ when $0 \le k,l \le n$.

\medskip
\noindent
{\bf Example 4:}  Let $r$ be a positive integer, and let $m_1, \dots, m_r$ be positive integers so that $m_i \le \frac {m_{i+1}}2$ for all $1 \le i \le r-1$.  For each $i$, let $G_i$ be a ultraspecial group of order $p^{3m_i}$ with an abelian subgroup of order $p^{2m_i}$, and let $A_{i,1},\dots, A_{i,m_i}$ be the $m_i \times m_i$-matrices as above for $G_i$.  Write $a^{i,j}_{k,l}$ for the $k,l$-entry of $A_{i,j}$, and we assume that $a^{i,j}_{k,l} = 0$ when $1 \le k,l \le m_i$.  The group $G$ will be generated by $x_1, \dots, x_{2m_r}$ and $z_{i,j}$ where $1 \le i \le r$ and $1 \le j \le m_i$.  We define the commutators between $x_k$ and $x_l$ as follows.  Of course, $[x_k,x_k] = 1$.  If $k > l$, then $[x_k,x_l] = [x_l,x_k]^{-1}$.  Thus, we may assume $k < l$.  If there exists an integer $i$ so that $m_i < l \le 2m_i$, then $[x_k,x_l] = \prod_{j=1}^{m_i} z_{i,j}^{a^{i,j}_{k,l}}$.  Note from the choice of the $m_i$'s that if such an integer $i$ exists, then it is unique.  If no such integer $i$ exists, then $[x_k,x_l] = 1$.

Please note that we do not know if we can weaken the hypothesis that $m_i \le \frac {m_{i+1}}2$ and still obtain examples.

\begin{theorem}
Let $G$ be the group defined in Example 4.  Then $G$ is a nested group of nilpotence class $2$ so that $\cd G = \{ 1, p^{m_1}, \dots, p^{m_r} \}$ and $|G'| = p^{m_1 + \dots + m_r}$. 
\end{theorem}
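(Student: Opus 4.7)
The plan is to mimic the proof of Theorem~\ref{ex1}: define an explicit chain of normal subgroups, verify via Lemma~\ref{sixca} that every normal quotient has center of the prescribed form, and then invoke Lemma~\ref{strict} (since any nilpotence class~$2$ group is a GVZ group) to read off $\cd G$. From the commutator relations, every commutator of generators lies in $\langle z_{i,j}\rangle \le Z(G)$, so $G' \le Z(G)$ and $G$ has nilpotence class at most $2$. Setting $m_0 = 0$, define for $0 \le i \le r$
$$
X_i = \langle x_{2m_i + 1}, \dots, x_{2m_r}\rangle \cdot Z(G),
$$
so that $X_0 = G$ and $X_r = Z(G)$. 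I will then show $|G:X_i| = p^{2m_i}$ and $[X_i,G] = \langle z_{i',j} : i' > i\rangle$; the second holds because for $k > 2m_i$ the larger index of any nontrivial commutator $[x_k,x_l]$ lies in some $(m_{i''}, 2m_{i''}]$ with $i'' > i$, and the spacing hypothesis $m_{i+1} \ge 2m_i$ is exactly what prevents that larger index from falling into a lower block.

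The technical engine is the identification $G/N_i \cong G_i$, where $N_i = \langle x_k : k > 2m_i\rangle \cdot \langle z_{i',j} : i' \ne i\rangle$. The spacing together with the vanishing conditions $a^{i,j}_{k,l} = 0$ on the abelian-subgroup block is exactly what is needed to check that the presentation induced on $G/N_i$ matches that of $G_i$. The diagonal homomorphism $G \to \prod_{i=1}^{r} G_i$ then forces the $z_{i,j}$ to be linearly independent across blocks, giving $|G'| = p^{m_1 + \cdots + m_r}$; inflating the nonlinear irreducible characters of each ultraspecial $G_i$ (all of degree $p^{m_i}$) back to $G$ also yields $\{1, p^{m_1}, \dots, p^{m_r}\} \subseteq \cd G$.

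The substantive step is verifying the hypothesis of Lemma~\ref{sixca}: for every normal subgroup $N$ of $G$, $Z(G/N) = X_i/N$ where $i = i(N) := \max\{i : \exists\, z_{i,j} \notin N\}$ (and $i(N) = 0$ when $G' \le N$). The inclusion $X_i/N \le Z(G/N)$ is immediate from $[X_i,G] \le N$ by the choice of $i$. For the reverse, following the strategy of Theorem~\ref{ex1}, I will reduce a general $N$ first to the case $N \le Z(G)$ via the inequality $[Z_N, G] \le G' \cap N$, and then to a hyperplane $\hat N$ of $Z(G)$ containing $N$ and chosen to omit some witness $z_{i,j_0} \notin N$, so that $i(\hat N) = i$. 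In the hyperplane case $N = \ker{\phi}$, the reverse inclusion becomes the statement that the skew-symmetric $2m_i \times 2m_i$ matrix $M$ with entries $M_{l,k} = \phi([x_k,x_l])$ is non-singular.

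The main obstacle is the non-singularity of $M$ when $\phi$ has support on several blocks. The plan is to exploit the ``block-triangular'' decomposition $M = \sum_{i' \in J} M^{(i')}$ afforded by the spacing bound $2m_{i'} \le m_{i''}$ for $i' < i''$: rows $l \in (m_i, 2m_i]$ receive contributions only from $M^{(i)}$, and their nonzero entries occur only in columns $k \le m_i$ because the first $m_i$ generators span a Lagrangian for the extraspecial pairing of $G_i/\ker{\phi_i}$. These $m_i$ equations form a square system governed by the non-singular Lagrangian-pairing block, forcing $e_k = 0$ for $k \le m_i$. Substituting this in, the rows $l \le m_i$ receive no contribution from lower $M^{(i')}$ (whose surviving columns are killed) and reduce to the transpose of that same non-singular block, forcing $e_k = 0$ for $k \in (m_i, 2m_i]$ as well. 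Once Lemma~\ref{sixca} applies, Lemma~\ref{strict} gives $|G:X_i| = p^{2m_i}$, yielding $\cd G = \{1, p^{m_1}, \dots, p^{m_r}\}$ and completing the theorem.
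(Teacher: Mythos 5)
Your global architecture is sound and, up to the decisive step, is workable: the chain $X_i$ and the computation $[X_i,G]=\langle z_{i',j} : i' > i\rangle$ match the paper, the reduction of an arbitrary normal $N$ first to $N \le Z(G)=G'$ via $[Z_N,G]\le N\cap G'$ and then to a hyperplane $\hat N=\ker{\phi}$ omitting a witness $z_{i,j_0}$ is valid (it is the strategy of the paper's Theorem \ref{ex1}, though the paper does not use it for Example 4), and the translation of $Z(G/\hat N)=X_i/\hat N$ into non-singularity of the $2m_i\times 2m_i$ matrix $M$ is correct. The genuine gap is in the non-singularity argument itself: you claim that the rows $l\in(m_i,2m_i]$ of $M^{(i)}$ have nonzero entries only in columns $k\le m_i$, i.e., that the bottom-right $m_i\times m_i$ block of $M^{(i)}$ vanishes. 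The construction guarantees the \emph{opposite} corner: the normalization $a^{i,j}_{k,l}=0$ for $1\le k,l\le m_i$, which encodes the abelian subgroup of order $p^{2m_i}$ of $G_i$, zeroes the top-left block. A Lagrangian makes the first $m_i$ coordinates isotropic and imposes nothing on $x_{m_i+1},\dots,x_{2m_i}$, which in a general ultraspecial $G_i$ do not commute modulo $\ker{\phi}$. Moreover, the spacing bound $2m_{i'}\le m_i$ places every lower-block matrix $M^{(i')}$ inside that same top-left corner, so the true shape is $M=\left(\begin{smallmatrix} P & E\\ -E^{T} & D\end{smallmatrix}\right)$ with $E$ invertible but with $P$ and $D$ both possibly nonzero, and your two-step elimination collapses. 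Nor can any purely block-combinatorial argument rescue it: for $E=I$, $P$ an invertible skew-symmetric matrix, and $D=-P^{-1}$ (also skew-symmetric), the matrix $\left(\begin{smallmatrix} P & I\\ -I & D\end{smallmatrix}\right)$ annihilates every vector $(u,-Pu)$, so non-singularity of $M$ must come from group-theoretic information linking $P$ to $(E,D)$ that your proposal never invokes.

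The paper closes exactly this step by a different mechanism and never passes to hyperplanes or matrices: given a central normal subgroup $N$ with $[X_i,G]\le N$ and $[X_{i-1},G]\not\le N$, it chooses a complement $W$ for $[X_{i-1},G]$ in $G'$ adapted to $N$ (containing a complement of $N\cap[X_{i-1},G]$ in $N$), shows $G/Y_iW[X_i,G]\cong G_i$, and transfers the semi-extraspecial property of $G_i$ through this quotient to conclude that $G/W[X_i,G]$ is a VZ-group with center $X_i/W[X_i,G]$, from which $Z(G/N)=X_i/N$ follows; the adapted complement is precisely what absorbs the cross-block interference your elimination ignores. To repair your route you would need either to exploit the recursive structure of $P$ (each $M^{(i')}$ has its own zero top-left corner) to prove non-singularity of $M$ directly, or to assume each $G_i$ possesses two complementary abelian subgroups of order $p^{2m_i}$ so that $D=0$ could legitimately be arranged --- neither is available from the stated hypotheses. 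The remaining items in your outline (nilpotence class $2$, $|G'|=p^{m_1+\cdots+m_r}$, and extracting $\cd{G}$ from Lemma \ref{strict} once Lemma \ref{sixca} applies) are correct and agree with the paper.
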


\begin{proof}
It is clear from the construction that $G$ has nilpotence class $2$.  Observe that $G' = \langle z_{k,l} \mid 1 \le k \le r, 1 \le l \le m_i \rangle$, so $|G'| = p^{m_1 + \dots + m_r}$.  For each $i$ so that $1 \le i \le r-1$, we set $X_i =  \langle x_{2m_i + 1}, \dots, x_{2m_r} \rangle G'$ with $X_0 = G$ and $X_r = G'$ and $Y_i = \langle x_{2m_i + 1}, \dots, x_{2m_r} \rangle$.  

We obtain $[X_i,G] = [Y_i,G] = \langle z_{k,l} \mid i+1 \le k \le r, 1 \le l \le m_k \rangle$.  Also, set $U_i = \langle z_{i,l} \mid 1 \le k \le m_i \}$.  Let $W$ be a complement for $[X_{i-1},G] = U_i \times [X_i,G]$ in $G'$ and write $V = Y_i W [X_i,G]$.  Observe that $[X_i,G] \le V \le X_i$, and since $X_i/[X_i,G]$ is central in $G/[X_i,G]$, we see that $V$ is normal in $G$.  It is not difficult to see that $G/V$ is isomorphic to $G_i$ and $X_i = V G'$, so $X_i/V = (G/V)' = Z(G/V)$.   Since $G_i$ is an s.e.s. group, it follows that $G/W[X_i,G]$ is a VZ-group with center $X_i/W[X_i,G]$.

In particular, we can take $W_i = \langle z_{k,l} \mid 1 \le k \le i-1, 1 \le l \le m_k \rangle$ and set $V_i =  Y_i W_i [X_i,G]$.   Let $Z/[X_i,G] = Z(G/[X_i,G])$.  It is easy to see that $X_i \le Z$.  On the other hand, $[Z,G] \le [X_i,G] \le V_i$ implies that $ZV_i/V_i \le Z(G/V_i) = X_i/V_i$, and so, $Z \le X_i$.  We conclude that $Z(G/[X_i,G]) = X_i/[X_i,G]$.  Since $[X_r,G] = 1$, we obtain $Z(G) = X_r = G'$. 

Let $N$ be a normal subgroup of $G$ and let $i$ be the largest integer in $\{ 1, \dots, r\}$ so that $[X_{i-1},G] \not\le N$.  It follows $[X_i,G] \le N$, and hence, $X_i/N \le Z(G/N)$.  If $G' \le N$, then $Z(G/N) = G/N$.  Thus, we may assume that $G' \not\le N$.

Suppose that $N < G'$.  Since $[X_{i-1},G] \not\le N$, we have $N \cap [X_{i-1},G] < [X_{i-1},G]$.  Let $A$ be a complement for $N \cap [X_{i-1},G]$ in $N$.  It follows that $A$ is contained in a complement $W$ for $[X_{i-1},G]$ in $G'$.  Observe that $|WN:W| = |N:N \cap W| = |N:A| = |[X_{i-1},G] \cap N| < |[X_{i-1},G]|$.  It follows that $W[X_i,G] \le WN < G'$.  Since $G/W[X_i,G]$ is a VZ-group with center $X_i/W[X_i,G]$, we deduce that $G/N$ is a VZ-group with center $X_i/N$.   

We now remove the supposition that $N < G'$.  Let $M = N \cap G'$, and since $G' \not\le N$, we have that $M < G'$.  By the previous paragraph, we have $Z(G/M) = X_i/M$.  Let $Z/N = Z(G/N)$.  Using the work two paragraphs ago, we have $X_i \le Z$.  On the other hand, $[Z,G] \le N$ and $[Z,G] \le G'$, and so, $[Z,G] \le G' \cap N = M$.  This implies that $Z/M \le Z(G/M) = X_i/M$, and this yields $Z \le X_i$.  We conclude that $Z = X_i$.  It follows from Lemma \ref{sixa} that $G$ is a nested group with chain of centers $G = X_0 > X_1 > \dots > X_r$.  The character degrees follow from Lemma \ref{strict}.
\end{proof}

\end{document}